\newtheorem{theorem}{Theorem}[section]
\newtheorem{corollary}[theorem]{Corollary}
\newtheorem{proposition}[theorem]{Proposition}
\newtheorem{lemma}[theorem]{Lemma}
\theoremstyle{definition}
\newtheorem{definition}[theorem]{Definition}
\newtheorem{remark}[theorem]{Remark}
\newtheorem{example}[theorem]{Example}
\newtheorem{thmx}{Theorem}
\newcommand \Homeo{ \mbox{Homeo}}
\newcommand{\bP}{{\mathbb{P}}}
\newcommand{\QQ}{{\mathbb{Q}}}
\newcommand{\NN}{{\mathbb{N}}}
\newcommand{\RR}{{\mathbb{R}}}
\newcommand{\ZZZ}{\mathbb{Z}}
\newcommand{\Cc}{{\mathcal{C}}}
\newcommand{\Kk}{{\mathcal{K}}}
\newcommand{\Mm}{{\mathcal{M}}}
\newcommand{\Pp}{{\mathcal{P}}}
\newcommand{\FF}{\mathcal{F}}
\newcommand{\PP}{\mathcal{P}}
\newcommand{\CC}{\mathcal{C}}
\newcommand \fra{Fra\"{i}ss\'{e} }
\newcommand{\Can}{2^\NN}
\newenvironment{exm}[1]{\begin{example}\label{exm:#1}}{\end{example}}
\newenvironment{pro}[1]{\begin{proposition}\label{pro:#1}}{\end{proposition}}
\newcommand{\PRO}[1]{Proposition~\textup{\ref{pro:#1}}}
\newcommand{\ffF}{\mathscr{F}}
\newcommand{\ggG}{\mathscr{G}}
\newcommand{\dd}{\colon}
\newcommand{\OPN}[1]{\operatorname{#1}}
\newcommand{\TFCAE}{The following conditions are equivalent:}
\newcommand{\Cantor}{Cantor space}
\newcommand{\dint}[1]{\,\textup{d} #1}
\begin{document}

\title[Automorphism groups of measures, I]{Automorphism groups of measures\\ on the Cantor space.\\Part I: Good measures and Rokhlin properties}

\begin{abstract}

We study criteria for the existence of a dense or comeager conjugacy class in the automorphism group of a given measure on the Cantor space. 
We concentrate on good measures, defined by Akin [\emph{Trans.\ Amer.\ Math.\ Soc.} \textbf{357} (2005), no. 7, 2681--2722], 
which we characterize as a particular subclass of ultrahomogeneous measures. We 
determine 
good measures with rational values on clopen sets whose  automorphism group admits a comeager conjugacy class. Our approach uses the \fra theory.
\end{abstract}

\author[M.~Doucha]{Michal Doucha}
\address[M. Doucha]{Institute of Mathematics\\
Czech Academy of Sciences\\
\v Zitn\'a 25\\
115 67 Praha 1\\
Czech Republic}
\email{doucha@math.cas.cz}
\author[D.~Kwietniak]{Dominik Kwietniak}
\address[D.~Kwietniak]{Faculty of Mathematics and Computer Science\\
Jagiellonian University in Krak\'{o}w\\
\L{}o\-ja\-sie\-wi\-cza 6\\%
 30-348 Krak\'{o}w\\Poland}
\email{dominik.kwietniak@uj.edu.pl}
\author[M.~Malicki]{Maciej Malicki}
\address[M. Malicki]{Faculty of Mathematics, Mechanics and Computer Science of the Warsaw University}
\email{mamalicki@gmail.com}
\author[P.~Niemiec]{Piotr Niemiec}
\address[P.~Niemiec]{Wydzia\l{} Matematyki i~Informatyki\\%
 Uniwersytet Jagiello\'{n}ski\\ul.\ \L{}o\-ja\-sie\-wi\-cza 6\\%
 30-348 Krak\'{o}w\\Poland}
\email{piotr.niemiec@uj.edu.pl}
\thanks{Research partially supported by the National Center of Science, Poland
 under the Weave-UNISONO call in the Weave programme [grant no
 2021/03/Y/ST1/00072]. The first-named author was supported by the GA\v{C}R project 22-07833K and by the Czech Academy of Sciences (RVO 67985840).}

 \keywords{good measure, Rokhlin properties, comeager conjugacy class, \fra theory}
 \subjclass[2020]{Primary 37A15, 28D15; Secondary 28D05, 03E15}
\maketitle

\section{Introduction}
We study automorphism groups of measures of Cantor spaces. 
The Cantor space $\Can$ is homeomorphically unique, but it carries multiple non-homeo\-morphic measures (here, all measures are always Borel and probabilistic).  The automorphism group of a measure $\mu$ is the collection $\Homeo(\Can,\mu)$ of all homeomorphisms of $\Can$ that preserve $\mu$. 
It is a closed subgroup of $\Homeo(\Can)$.

A topological invariant for a full (positive on nonempty open sets), nonatomic measure $\mu$ on the Cantor space is its clopen values set $V_\mu$ (the set of values attained on clopen subsets). Good measures on $\Can$ are full measures $\mu$ satisfying the Subset Condition: if $\mu(U) < \mu(V)$ for some clopen sets $U$ and $V$, then $V$ contains a clopen subset $W$ with $\mu(W) = \mu(U)$. All good measures are nonatomic (see Proposition \ref{PN-good-nonatomic}). Good measure $\mu$ is up to a homeomorphism determined by its clopen values set $V_\mu$. Glasner and Weiss \cite{GW95} proved that if $\mu$ is a unique invariant measure for some minimal homeomorphism on $\Can$ then it is good. Akin \cite{Akin05} proved the converse, that is, he proved that every good measure is preserved by a uniquely ergodic homeomorphism of $\Can$. He \cite{Akin05} also proved that  $\Homeo(\Can,\mu)$ contains a comeager
conjugacy class when $V_\mu+\ZZZ$ is a $\QQ$-vector subspace of $\RR$, a criterion later recovered by Ibarluc\' ia and Melleray \cite{IM16}.

Ibarluc\' ia and Melleray \cite{IM16} studied existence of dense conjugacy class in the group $\Homeo(\Can,\mu)$ of all homeomorphisms preserving a given good measure $\mu$. They used a \fra theoretic approach to recover a result of Akin which provides a criterion phrased in terms of $V_\mu$ and describing when the group $\Homeo(\Can,\mu)$ admits a comeager conjugacy class. See also \cite{IM17,Mel23} for further research on the topic.

We apply the \fra theoretic approach to good measure themselves, an idea used in particular cases already in \cite{KeRo07} and also in \cite{BieKuWa19}, to obtain the following. We refer to Section~\ref{sec:good-ultra} for undefined notions.
\begin{thmx}[see Theorem~\ref{thm:fraisseisgood}]
Good measures on the Cantor space are precisely those full ultrahomogeneous measures that are maximal.   
\end{thmx}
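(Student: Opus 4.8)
The plan is to run everything through the Fraïssé dictionary set up in Section~\ref{sec:good-ultra}. The finite objects are finite Boolean algebras (equivalently, finite clopen partitions) whose atoms carry a prescribed probability weight, and the morphisms are the weight-preserving embeddings. To a full measure $\mu$ on $\Can$ one attaches its age $\mathcal{A}_\mu$: the class of such finite weighted algebras that embed, preserving weights, as a subalgebra of the clopen algebra of $(\Can,\mu)$. By the correspondence recalled there, $\mu$ is ultrahomogeneous precisely when $(\Can,\mu)$ is the \fra limit of $\mathcal{A}_\mu$, equivalently when $\mathcal{A}_\mu$ has the amalgamation property, and the age recovers $\mu$ up to a measure-preserving homeomorphism. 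The maximality of $\mu$ is read as maximality of $\mathcal{A}_\mu$ among the ages of full ultrahomogeneous measures sharing the clopen values set $V_\mu$. The proof is an equivalence, and I would organize it as: (i) good $\Rightarrow$ ultrahomogeneous; (ii) good $\Rightarrow$ maximal; (iii) full, ultrahomogeneous, maximal $\Rightarrow$ good, the last argued contrapositively.

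For (i) I would verify ultrahomogeneity by a back-and-forth extension of any weight-preserving isomorphism between two finite clopen subalgebras, adding one generator at a time. The single nontrivial step is: given a clopen set of $\mu$-measure $a$ and a target clopen set of measure $b\ge a$, locate a clopen subset of the target of measure exactly $a$. This is exactly the Subset Condition, and iterating it (using nonatomicity, Proposition~\ref{PN-good-nonatomic}, to split values as finely as needed) lets the partial isomorphism be pushed to a global measure-preserving homeomorphism. For (ii) I would show that the age of a good measure is as large as its value set allows: any finite weighted algebra whose atom-weights are compatible with $V_\mu$ already embeds into $(\Can,\mu)$, once more by repeated application of the Subset Condition. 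Since the age of any ultrahomogeneous measure with the same clopen values set is contained in this full realizable class, no strictly larger age is available, so $\mu$ is maximal.

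For (iii) I would assume $\mu$ is full and ultrahomogeneous but fails the Subset Condition, and produce an ultrahomogeneous measure whose age strictly contains $\mathcal{A}_\mu$, contradicting maximality. A failure of the Subset Condition yields clopen sets $U,V$ with $\mu(U)<\mu(V)$ admitting no clopen subset $W\subseteq V$ with $\mu(W)=\mu(U)$; this is witnessed by a finite weighted algebra that is compatible with $V_\mu$ yet omitted from $\mathcal{A}_\mu$. I would then enlarge $\mathcal{A}_\mu$ by adjoining the omitted structures (together with the closure needed to keep it hereditary) and check that the enlarged class still has the hereditary, joint-embedding, and amalgamation properties, so that it is the age of a genuine full ultrahomogeneous measure lying strictly above $\mu$ with the same clopen values set. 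Maximality of $\mu$ is thereby violated, which forces the Subset Condition and hence goodness.

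The main obstacle is step (iii), and precisely the amalgamation bookkeeping inside it: one must confirm that the enlarged class genuinely amalgamates, so that it is realized by an honest ultrahomogeneous measure rather than being a mere combinatorial class of finite structures. In effect the Subset Condition has to be reverse-engineered from maximality, and verifying the amalgamation property of the larger class (equivalently, that the candidate larger measure exists and stays ultrahomogeneous) is the crux of the argument. The forward extension and back-and-forth estimates in (i) and (ii), while technical, become routine once the Subset Condition is available.
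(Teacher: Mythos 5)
There is a genuine gap, and it begins with the definition you are working from. In this paper, \emph{maximal} (Definition~\ref{dfn:ultrahomo}) is an intrinsic condition on \(\mu\): every tuple \(v_1,\ldots,v_n\) of non-zero elements of \(V_\mu\) with \(\sum_{i=1}^n v_i=1\) is realized by a clopen partition, and moreover \(|v-w|\in V_\mu\) for all \(v,w\in V_\mu\). You instead read ``maximal'' as extremality of the age \(\mathcal{A}_\mu\) among ages of full ultrahomogeneous measures sharing \(V_\mu\), and you never show the two readings agree. They need not: extremality can hold vacuously (e.g.\ if \(\mu\) is essentially the only ultrahomogeneous measure with its clopen values set; compare the rigid measure of Example~\ref{exm:PN-homo-bad}), and the difference-closure clause, which your reading drops entirely, is precisely what guarantees that the weights \(\mu(B)-\mu(A)\) and \(1-\mu(B)\) lie in \(V_\mu\) when one extracts the subset condition. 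With the paper's actual definition, the implication (full + ultrahomogeneous + maximal \(\Rightarrow\) good) is a short direct argument: realize \((\mu(A),\mu(B)-\mu(A),1-\mu(B))\) by a clopen partition \(\{P_1,P_2,P_3\}\) and use ultrahomogeneity to carry \(P_1\cup P_2\) onto \(B\), so that the image of \(P_1\) witnesses the subset condition; nonatomicity then comes for free from Proposition~\ref{PN-good-nonatomic}. No contrapositive and no construction of a larger measure is needed.

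Second, your step (iii) is not a proof but a restatement of the difficulty: you propose to adjoin the omitted finite structures to \(\mathcal{A}_\mu\) and ``check'' that the enlarged class amalgamates, and you concede this is the crux without carrying it out. Worse, the step is in real danger of failing as stated: for a full ultrahomogeneous non-good \(\mu\), the set \(V_\mu\) need not be group-like (group-likeness is deduced in the paper from \emph{goodness}, via Akin), so Lemma~\ref{le:CommonPartition} --- the only amalgamation engine available --- does not apply to an ad hoc enlargement of \(\mathcal{A}_\mu\), and you would additionally have to verify that the enlarged limit has clopen values set exactly \(V_\mu\). The paper sidesteps all of this: for the converse direction (good \(\Rightarrow\) maximal + ultrahomogeneous) it notes \(V_\mu\) is group-like, proves amalgamation for the \emph{full} class \(\FF_{V_\mu}\) via Lemma~\ref{le:CommonPartition}, takes the \fra limit (Proposition~\ref{prop:fraissemeasure}), and transfers maximality and ultrahomogeneity back to \(\mu\) using Akin's uniqueness theorem for good measures with a given clopen values set. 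Your back-and-forth in (i) (good \(\Rightarrow\) ultrahomogeneous by iterating the subset condition) is viable in outline --- it is essentially Akin's original route, which the paper deliberately bypasses --- but the salvageability of (i)--(ii) does not close (iii), so the proposal does not prove the theorem.
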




As ultrahomogeneity is
closely related to \fra classes, it is quite natural to study
the automorphism groups of all ultrahomogeneous measures in terms of properties
typical in dynamical systems (such as transitivity or minimality of the action,
Rokhlin properties, etc.). These are the topic of the second part of the paper
\cite{DKMN2}, whereas here in this article we will focus mainly on good measures (as they are the most common examples from the aforementioned class).
Among our results on the automorphism groups of good measures we highlight the following.
\begin{thmx}[see Theorem~\ref{thm:Q-ring}]
Let $\mu$ be a good measure on $\Can$ whose clopen values set is contained in \(\QQ\). \TFCAE
\begin{itemize}
    \item $\Homeo(\Can,\mu)$ admits a dense conjugacy class.
    \item $\Homeo(\Can,\mu)$ admits a comeager conjugacy class.
    \item $V_\mu+\ZZZ$ is a subring of \(\RR\).
\end{itemize}  
\end{thmx}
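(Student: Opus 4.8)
The plan is to run the whole argument through the \fra-theoretic description of $\Homeo(\Can,\mu)$ (as in \cite{IM16} and Section~\ref{sec:good-ultra}) together with the Kechris--Rosendal criterion. Write $G=V_\mu+\ZZZ$; since $V_\mu\subseteq\QQ$, this is a subgroup of $\QQ$ containing $\ZZZ$ and dense in $\RR$, and the subring condition is equivalent to $G=\ZZZ[S^{-1}]$ for a set of primes $S$ (i.e.\ for each prime $p$ the maximal power of $p$ occurring in denominators of $G$ is either $0$ or $\infty$). Realizing $(\Can,\mu)$ as the \fra limit $\mathbb M$ of the class of finite Boolean algebras measured by $V_\mu$, one has $\Homeo(\Can,\mu)\cong\OPN{Aut}(\mathbb M)$, and by the Kechris--Rosendal criterion \cite{KeRo07} the group has a dense conjugacy class iff the class $\mathcal K_\sigma$ of finite measured Boolean algebras equipped with a partial measure-preserving automorphism has the joint embedding property (JEP), and a comeager conjugacy class iff $\mathcal K_\sigma$ has in addition the weak amalgamation property (WAP). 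The implication comeager $\Rightarrow$ dense is then immediate, so the work splits into (i) subring $\Rightarrow$ JEP $+$ WAP and (ii) dense $\Rightarrow$ subring.

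For (i) the engine is multiplicative closure. JEP comes from the \emph{independent} amalgam: the product Boolean algebra with atom-measures $\mu(P_i)\mu(Q_j)$ carries the product partial automorphism, and these measures lie in $V_\mu$ precisely because $G$ is a ring. For WAP I would first use a measured Hrushovski/EPPA step, extending any partial automorphism of a finite measured Boolean algebra to a \emph{total} automorphism of a larger one by closing up the broken orbits with disjoint copies; this adds no new measure values and so needs no hypothesis on $G$. Then I would refine the distinguished (amalgamation) algebra so that all of its atoms have measure $1/n$ with $n$ an $S$-number (a product of primes of $S$), which is possible because $G=\ZZZ[S^{-1}]$. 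Over such a uniform base the relatively independent amalgam of two extensions assigns to its atoms the measures $n\,\mu(P_i)\mu(Q_j)$; here division by the base measure $1/n$ has become multiplication by the integer $n$, so the result stays in $V_\mu$ and the amalgam lives in $\mathcal K_\sigma$. This yields WAP, hence a comeager conjugacy class, and specializes to the Akin/Ibarluc\'ia--Melleray case $G=\QQ$ of \cite{Akin05,IM16}, where every base measure is already a unit and no uniformization is needed.

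For (ii) I argue the contrapositive: if $G$ is not a subring, fix a prime $p$ with $1\le h_p<\infty$, so that $p^{-h_p}\in V_\mu$ but $p^{-(h_p+1)}\notin G$, and exhibit two systems in $\mathcal K_\sigma$ that cannot be jointly embedded. The mechanism I expect to use is that two cyclic partial automorphisms whose supports are forced to overlap — by taking their total measures to sum to more than $1$ — must, on the overlap, be refined to a single orbit of length the least common multiple of the two periods; choosing the periods so that this least common multiple introduces the extra factor of $p$ forces an atom of measure a multiple of $p^{-(h_p+1)}\notin V_\mu$, contradicting membership in $\mathcal K_\sigma$. Since a pair of \emph{single} full cycles always admits the concentrated (diagonal) coupling and hence embeds, the delicate point — and the main obstacle of the whole proof — is to design the interacting cyclic structures so that every common invariant refinement is genuinely forced to spread measure across the full least common multiple, ruling out all correlated couplings; checking that the WAP construction of step (i) really survives the non-$\QQ$-vector-space localizations is the other technically loaded point. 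Combining (i), (ii) and the trivial comeager $\Rightarrow$ dense closes the cycle of equivalences.
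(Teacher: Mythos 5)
Your positive direction (subring \(\Rightarrow\) dense and comeager) is, modulo translation between the model-theoretic and the matrix language, the paper's own route: your independent (product) amalgam giving JEP is exactly Lemma~\ref{lem:ringlike-densityproperty}, where the common lift of \(((v_i,n_i))\) and \(((w_j,m_j))\) is \(((v_iw_j,n_im_j))\); your totalization step corresponds to the cycle decomposition (Lemma~\ref{lem:MM-cycle}, Corollary~\ref{cor:MM-cofinal}), which indeed needs no hypothesis on \(G\); and your ``uniformize the base to atoms of measure \(1/n\) with \(n\) an \(S\)-number, then rescale'' is precisely the paper's reduction in Theorem~\ref{thm:Q-ring} via Proposition~\ref{prop:cofinalcharacterization}: write \(v=r/q\) in lowest terms, reduce to directedness of \(\CC_V^{1/q}\), and observe that the rescaled set \(V'=\{qw:\ w\in V\cap[0,1/q]\}\) is again ring-like. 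So step (i) is sound in outline, with only routine equivariance details left unchecked.

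The genuine gap is in step (ii), and you flag it yourself: you never actually construct the two systems that fail to embed jointly, and the mechanism you propose --- two nontrivial cycles whose supports are forced to overlap, with the lcm of the periods injecting an extra factor of \(p\) --- does not pin anything down. The overlap mass is only bounded below, not determined, so a correlated coupling is free to choose it inside \(\mathrm{lcm}\cdot(V_\mu+\ZZZ)\); worse, since a full cycle of length \(n\) needs weight \(1/n\in V_\mu\), the \(p\)-part of the lcm of two admissible full cycles can never exceed \(p^{h_p}\), so the extra factor of \(p\) cannot come from two cycles at all. The paper's resolution abandons the two-cycle design: it pairs the full-support \(p\)-cycle (all of \(\Can\) in \(p\) pieces of measure \(1/p\)) against the \emph{trivial} automorphism with a distinguished clopen set of measure \(v\). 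In any common refinement each cycle lies over a single fixed point of the trivial system and has length divisible by \(p\), so \(v=\sum_i n_iw_i\) with \(p\mid n_i\) and hence \(v/p\in V_\mu\); this is Lemma~\ref{lem:density-divisibility}, obtained from directedness of \(\CC_V\) via Theorem~\ref{th:Dense}. Taking \(v=p^{-h_p}\) forces \(p^{-(h_p+1)}\in V_\mu\), contradicting the finiteness of \(h_p\). The ``identity with a marked set'' witness eliminates the coupling freedom entirely because the marked mass is \emph{exactly} \(v\), not merely an overlap estimate; without this (or an equivalent substitute) your contrapositive does not go through, and since the whole cycle of equivalences hinges on it, the proposal as written is incomplete at its decisive step.
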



The paper is organized as follows. In Section~\ref{sect:prelim} we recall
the notion of a group-like sets and introduce general concepts from category
theory that will be used by us in the sequel. Section~\ref{sec:good-ultra} is devoted
to the concept of ultrahomogeneity for measures. We give there two
characterizations of good measures: one in terms of \fra limits (of certain
countable structures) and the other as precisely those ultrahomogeneous measures
that are maximal (see, respectively, Proposition~\ref{prop:fraissemeasure} and
Theorem~\ref{thm:fraisseisgood} below). We also give illustrative examples of
ultrahomogeneous measures that are not good, and show that good measures are (in
a certain quite natural sense) generic among all measures whose clopen values
set is contained in a given group-like set. Section~\ref{sec:amalgam-1} discusses Rokhlin 
properties of the automorphism groups of good measures. Among other things, we
give there an equivalent condition for the aforementioned group to have
the Rokhlin property (see Theorem~\ref{th:Dense} below) as well as a sufficient
condition for this group to have a non-meager conjugacy class (consult
Theorem~\ref{th:CAMImpliesNonMeager} below). 
The latter is also an equivalent condition but we omit the proof of the other direction as we do not need it.
Finally, in Section~\ref{sec:amalgam-2} 
we apply the results to fully classify (in Theorem~\ref{thm:Q-ring}) good measures with clopen values
set contained in \(\QQ\) whose automorphism groups have the Rokhlin property and show that for such measures this property is actually equivalent to the strong Rokhlin property. The same equivalence (between strong and standard Rokhlin properties) holds also for good measures whose clopen values set contains \(\QQ \cap [0,1]\) (as shown by
Lemma~\ref{lem:Q-in-V}). 

\subsection*{Notation and terminology}
In this paper \(\NN = \{1,2,\ldots\}\), all topological spaces are Polish (that is, they are completely metrisable and separable), apart from the space \(\OPN{Full}(V)\) introduced in the paragraph preceding \PRO{PN-generic-measure}. All measures considered here are Borel probability measures. A Cantor space is a zero-dimensional, perfect,
nonempty, compact Polish space. We write $\Can$ to denote the countable
topological product of the discrete spaces $\{0,1\}$. It is standard that $\Can$ is a Cantor space. A measure is \emph{full} if it is positive on all nonempty
open sets (has full support). A measure is \emph{nonatomic} if it vanishes on all
countable subsets. A homeomorphism $h\colon X \to X$ is \emph{minimal} if every orbit is dense. For any measure \(\mu\) on \(X\) we use \(h^*(\mu)\) to denote
the \emph{transport} of \(\mu\) via \(h\); that is, \(h^*(\mu)(B) =
\mu(h^{-1}(B))\) for any Borel set \(B \subseteq X\). The measure $\mu$ is \emph{invariant} under \(h\) (or, equivalently, \(h\) \emph{preserves} \(\mu\))
if $h^*(\mu) = \mu$.
Given a measure $\mu$ on $\Can$ we write $\Homeo(\Can,\mu)$ for the Polish group of all homeomorphisms of $\Can$ preserving $\mu$.\par
Following e.g. \cite{Akin05}, we say a Polish group \(G\) has \emph{Rokhlin property} (resp. \emph{strong Rokhlin property}) if it admits a dense (resp. dense \(\ggG_{\delta}\)) conjugacy class.

\section{Preliminaries}\label{sect:prelim}
\subsection{Group-like sets.}
Recall that a set $V$ is \emph{group-like}
if $\{0,1\}\subseteq V\subseteq[0,1]$ and $V+\mathbb{Z}$ is an additive subgroup of $\mathbb{R}$. In other words, 
$V\subseteq[0,1]$ is group-like if and only if $\{0,1\}\subseteq V$ and
there is an additive subgroup $G$ of $\mathbb{R}$ such that $V=[0,1]\cap G$. \textbf{In this paper, all group-like sets are, by definition, (in addition) countably infinite}. We say that a group-like set $V$ is \emph{$\QQ$-like} (respectively, \emph{ring-like}) if $V + \mathbb{Z}$ is a rational vector space (respectively, a subring of $\mathbb{R}$). It is easy to see that every group-like set must be dense in $[0,1]$.

\begin{lemma}
\label{le:CommonPartition}
Let $V$ be group-like. Let $x^0_i \in V$ ($1\leq i \leq k$), $x^1_j \in V$ ($1\leq j \leq l$), and $z \in V$ be such that $$x^0_1+ \ldots +x^0_k= x^1_1+ \ldots +x^1_l=z.$$ Then there exist $m\ge 1$ and $z_i \in V$ ($1\leq i \leq m$) such that $$z_1+ \ldots + z_m=z,$$ and $\{1, \ldots m\}$ 
 can be partitioned in two ways: into the sets $X^0_i$ ($1\leq i \leq k$) and  into sets $X^1_j$ ($1\leq j \leq l$) in such a way that for $1\leq i \leq k$ and $1\leq j \leq l$ we have $$x^0_i= \sum_{s \in X^0_i} z_s \quad\text{and}\quad x^1_j= \sum_{s \in X^1_j} z_s.$$ 
\end{lemma}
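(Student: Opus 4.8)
The plan is to form the common refinement of the two subdivisions of the interval $[0,z]$ determined by the given sums, and to use the group-like structure of $V$ to ensure that every piece of the refinement again lies in $V$.

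By the definition of a group-like set there is an additive subgroup $G$ of $\RR$ with $V = [0,1] \cap G$; in particular every finite sum of elements of $V$ lies in $G$. First I would record the partial sums (the ``cut points'')
\[
a_i = x^0_1 + \ldots + x^0_i \quad (0 \le i \le k), \qquad b_j = x^1_1 + \ldots + x^1_j \quad (0 \le j \le l),
\]
where $a_0 = b_0 = 0$ and $a_k = b_l = z$, so that all of them lie in $G \cap [0,z]$. The degenerate case $z = 0$ forces every $x^0_i$ and every $x^1_j$ to vanish, and is settled at once by taking $m = 1$, $z_1 = 0 \in V$, and $X^0_1 = X^1_1 = \{1\}$ with all other parts empty; so from now on I would assume $z > 0$.

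Next I would merge the cut points into a single strictly increasing list $0 = c_0 < c_1 < \ldots < c_m = z$, discarding repetitions (so that $m \ge 1$), and set $z_s = c_s - c_{s-1}$ for $1 \le s \le m$; telescoping then gives $z_1 + \ldots + z_m = z$. The step on which everything hinges is the claim that each $z_s$ lies in $V$: by construction $0 < z_s \le z \le 1$, and $z_s$ is a difference of two elements of $G$, whence $z_s \in [0,1] \cap G = V$. This is the only place where group-likeness is genuinely used, and I expect it to be the one substantive point of the proof, the remaining bookkeeping being routine; for a set merely closed under the relevant sums the differences need not return to the set.

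Finally I would read off the two partitions. Since every $a_i$ occurs among the $c_s$, no $a_i$ can lie strictly inside a refinement interval $(c_{s-1}, c_s)$, so each such interval is contained in a unique $[a_{i-1}, a_i]$; accordingly I would set $X^0_i = \{\, s : a_{i-1} \le c_{s-1} < c_s \le a_i \,\}$, allowing $X^0_i = \varnothing$ when $x^0_i = 0$. These sets are pairwise disjoint, exhaust $\{1, \ldots, m\}$, and telescope to $\sum_{s \in X^0_i} z_s = a_i - a_{i-1} = x^0_i$. Defining $X^1_j$ symmetrically from the $b_j$ yields the second partition with $\sum_{s \in X^1_j} z_s = x^1_j$, which completes the argument.
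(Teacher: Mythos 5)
Your proof is correct, but it takes a genuinely different route from the paper. You form the two chains of partial sums (cut points) $0=a_0\le a_1\le\ldots\le a_k=z$ and $0=b_0\le\ldots\le b_l=z$, merge them into one strictly increasing list, and take the common refinement, with group-likeness invoked exactly once to guarantee that each gap $c_s-c_{s-1}$, being a difference of elements of $G$ lying in $(0,1]$, is again in $V=[0,1]\cap G$. The paper instead argues by induction on $k+l$: assuming without loss of generality $x^0_k\ge x^1_l$, it either removes a matched pair (when $x^0_k=x^1_l$) or replaces $x^0_k$ by $x^0_k-x^1_l$ and appends $x^1_l$ as a new common summand afterwards --- a Euclidean-style subtraction scheme that uses the same key closure property ($V$ is closed under differences within $[0,1]$), but only implicitly, inside the inductive step. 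Your version is non-inductive and arguably more transparent: it exhibits the refinement explicitly, produces distinct positive parts $z_s$ (with the minimal possible $m$), and isolates precisely where group-likeness is needed, as you correctly note --- mere closure under the relevant sums would not suffice. The paper's induction, by contrast, needs no geometric picture of $[0,z]$ and handles zero summands by keeping nonempty parts with zero weights, where you instead allow empty parts $X^0_i$ when $x^0_i=0$; both conventions satisfy the lemma as it is actually applied later (to strictly positive measure values, where your parts are automatically nonempty). Your separate treatment of the degenerate case $z=0$ and the verification that the blocks $X^0_i=\{s : a_{i-1}\le c_{s-1}<c_s\le a_i\}$ are disjoint, exhaustive, and telescope to $x^0_i$ are all sound.
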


\begin{proof}
The proof goes by induction on $n=k+l$. If $k=1$ or $l=1$, then the lemma is obviously true. Assume that $k,l \geq 2$ and that the lemma holds whenever $k'+l'< k+l$. Without loss of generality we can assume $x^0_k \geq x^1_l$. If $x^0_k= x^1_l$, we apply the inductive assumption to $x^0_1, \ldots,x^0_{k-1}$, $x^1_1, \ldots,x^1_{l-1}$ and $\bar{z}=z-x^1_l$ to obtain $m$ and $z_s \in V$ $(1\leq s \leq m)$. Clearly, $z_1, \ldots ,z_m, z_{m+1}=x^1_l$ are as required. Otherwise,  we slightly modify the first sequence setting $\bar{x}^0_1=x^0_1,\ldots, \bar{x}^0_{k-1}=x^0_{k-1}$, $\bar{x}^0_k=(x^0_k-x^1_l)$, and then apply the inductive assumption to $\bar{x}^0_1,\ldots, \bar{x}^0_k$, $x^1_1, \ldots,x^1_{l-1}$, and $\bar{z}=z-x^1_l$ to obtain $m$ and $z_s \in V$ $(1\leq s \leq m)$. Again, $z_1, \ldots ,z_m, z_{m+1}=x^1_l$ are as required. 
\end{proof}

\subsection{Categories}
Let $\Kk$ be a category. Writing $A\in\Kk$ we mean that ``$A$ is an object of $\Kk$.'' We say that $\Kk'$ is a subcategory of $\Kk$ and write $\Kk'\subseteq \Kk$ if $\Kk'$ is a category such that each
object of $\Kk'$ is an object of $\Kk$ and each morphism arrow of $\Kk'$ is also an arrow of $\Kk$ (with the same domain and co-domain).

For $A, B \in \Kk$ and a morphism $p\colon B \rightarrow A$ that is a surjective mapping, we say that \emph{$p$ projects $B$ onto $A$}, and that $B$ is a \emph{lift} (or, more precisely, a \emph{$p$-lift}) of $A$. For $A \in \Kk$, we write $\Kk^A \subseteq \Kk$ for the subcategory of all lifts of $A$.

We say that $\Kk$ is \emph{directed} if every two objects have a common lift, that is, for any $A_i \in \Kk$ ($i=0,1$), there is $B\in\Kk$ and there are morphisms $r_i\colon B \rightarrow A_i$. The category $\Kk$ \emph{has amalgamation} if for any $A \in \Kk$ and any morphisms $r_i\colon B_i \rightarrow A$ ($i=0,1$), there is $C\in\Kk$ and there are morphisms $s_i\colon C \rightarrow B_i$ such that $r_0 \circ s_0=r_1 \circ s_1$. A subcategory $\Kk'\subseteq\Kk$ is \emph{cofinal} in $\Kk$ if every object from $\Kk$ has a lift from \(\Kk'\) (with an arrow from \(\Kk\)). \(\Kk\) has \emph{cofinal amalgamation} if there is a cofinal $\Kk' \subseteq \Kk$ with amalgamation. Finally, the category $\Kk$ has \emph{weak amalgamation} if for any $A \in \Kk$ there is a morphism $p\colon A' \rightarrow A$ such that for any morphisms $r_i\colon B_i \rightarrow A'$ ($i=0,1$), there is $C\in\Mm$ and there are morphisms $s_i\colon C \rightarrow B_i$ such that $p \circ r_0 \circ s_0= p \circ r_1 \circ s_1$. We call any $A'$ as above an \emph{amalgamation basis for $A$}.

We denote by $\sigma \Kk$ the space of all \emph{chains} from $\Kk$, i.e., sequences $(A_n,p_n)$ such that $A_n \in \Kk$ and $p_n\colon A_{n+1} \rightarrow A_n$ are morphisms. We will denote by $p^j_i$, $i<j$, the appropriate compositions of $p_n$, and write $(A_n)$ for $(A_n,p_n)$ if there is no risk of ambiguity. (Iso)morphisms of chains are defined in a natural way (see \cite{Kub22} for details).
We say that a chain $(A_n,p_n)$ from $\Kk$ is \emph{\fra }if $\{A_n\}$ is cofinal in $\Kk$, and for every $i$, $A \in \Kk$, and $\pi\colon A \rightarrow A_i$ there are $j \geq i$ and $r\colon A_j \rightarrow A$ such that $\pi \circ r=p^j_i$. Any two \fra chains are isomorphic. It is the cornerstone of \fra theory that if the category $\Kk$ is countable, it is directed and has amalgamation then $\sigma\Kk$ admits a \fra chain. We refer to \cite{Kub22} for details.


\section{Good measures are ultrahomogeneous}\label{sec:good-ultra}

In the existing literature good measures are \emph{always} defined as full,
nonatomic and satisfying the subset condition. To our best
knowledge, the observation that being nonatomic is redundant in this definition is new.

\begin{proposition}\label{PN-good-nonatomic}
A full measure on $\Can$ satisfying the subset condition is good.
\end{proposition}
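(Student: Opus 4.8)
The plan is to check the one property that separates the conclusion from the hypotheses. In the customary definition a good measure is full, nonatomic, and satisfies the subset condition; as \(\mu\) is assumed full and to satisfy the subset condition, the entire content of the proposition is that \(\mu\) is nonatomic, i.e.\ (by countable additivity) that every singleton is \(\mu\)-null. I would argue by contradiction: suppose some \(x \in \Can\) has \(a := \mu(\{x\}) > 0\), and fix a decreasing sequence of clopen neighborhoods \(C_1 \supseteq C_2 \supseteq \cdots\) of \(x\) with \(\bigcap_n C_n = \{x\}\). Continuity of \(\mu\) from above gives \(\mu(C_n) \downarrow a\), while perfectness of \(\Can\) together with fullness forces each \(C_n \setminus \{x\}\) to be a nonempty open set, so that \(\mu(C_n) > a\) for all \(n\).

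Next I would record that there is a clopen set of measure strictly between \(0\) and \(a\): otherwise every nonempty clopen set has measure \(\ge a\), and splitting \(\Can\) into its \(2^k\) basic cylinders of length \(k\) would give \(2^k a \le \mu(\Can) = 1\) for every \(k\), which is impossible. Hence the quantity \(\gamma := \sup\{\mu(U) : U \text{ clopen},\ \mu(U) < a\}\) lies in \((0,a]\).

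The core of the argument is to manufacture a value that the subset condition cannot place inside a small neighborhood of the atom. I would choose \(n\) so large that \(\mu(C_n) - a < \gamma\) (possible since \(\mu(C_n) - a \downarrow 0 < \gamma\)), set \(V := C_n\) and \(\beta := \mu(V) - a \in (0,\gamma)\), and observe that no clopen \(W \subseteq V\) can have \(\mu(W) \in [\beta, a)\): if \(x \in W\) then \(\mu(W) \ge \mu(\{x\}) = a\), whereas if \(x \notin W\) then, \(W\) being compact and missing \(x\), it is disjoint from some \(C_m\), so \(\mu(W) \le \mu(V) - \mu(C_m) < \mu(V) - a = \beta\). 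On the other hand, since \(\beta < \gamma\) is not an upper bound for the clopen values below \(a\), there is a clopen \(U\) with \(\beta < \mu(U) < a\); as \(\mu(U) < a < \mu(V)\), the subset condition produces a clopen \(W \subseteq V\) with \(\mu(W) = \mu(U) \in (\beta, a)\), contradicting the previous sentence. This refutes the existence of the atom and completes the proof.

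I expect the main obstacle to be precisely this last construction: one must simultaneously (a) pin down the measures of clopen subsets of a neighborhood of \(x\), showing they avoid the window \([\beta,a)\), and (b) guarantee an \emph{external} clopen set whose measure falls inside that window. Part (b) is what the supremum \(\gamma\) and the choice of \(n\) are designed to secure; once both are in hand, the subset condition delivers the contradiction immediately, so the difficulty lies entirely in setting up the gap rather than in exploiting it.
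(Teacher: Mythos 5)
Your proof is correct, and its engine is the same as the paper's: surround the atom $x$ with a clopen set whose measure exceeds $a=\mu(\{x\})$ by less than some clopen value lying in $(0,a)$, invoke the subset condition to place a clopen set of that value inside the neighbourhood, and observe that this is impossible because a clopen $W$ contained in the neighbourhood $V$ satisfies $\mu(W)\geq a$ if $x\in W$ and $\mu(W)\leq \mu(V)-a$ if $x\notin W$. Where you genuinely differ is in how the external witness is produced. The paper picks a point $y$ with $\mu(\{y\})=0$ (implicitly using that a finite measure has only countably many atoms while $\Can$ is uncountable), shrinks clopen neighbourhoods of $y$ to get $0<\mu(V)<a$, and then tightens a clopen $U\ni x$ so that $\mu(U)<a+\mu(V)$; you instead prove that clopen values in $(0,a)$ exist by counting the $2^k$ cylinders of length $k$, introduce $\gamma=\sup\{\mu(U):U\text{ clopen},\ \mu(U)<a\}$, and shrink $C_n$ until the excess $\beta=\mu(C_n)-a$ drops below $\gamma$. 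Both arguments are sound; yours is self-contained in that it never needs the countability of atoms, at the cost of the supremum bookkeeping, while the paper's choice of a null point makes the window argument immediate. One small simplification available to you: the compactness detour (finding $C_m$ disjoint from $W$ to get the strict bound $\mu(W)<\beta$) is unnecessary, since $x\notin W\subseteq V$ already gives $W\subseteq V\setminus\{x\}$ and hence $\mu(W)\leq\mu(V)-a=\beta$, which suffices because your final contradiction places $\mu(W)$ in the open window $(\beta,a)$.
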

\begin{proof}
Let \(\mu\) be a full measure on $\Can$ satisfying the subset condition. It remains to show that \(\mu\) is nonatomic. Assume that
\(\mu(\{x\}) = c > 0\) for some \(x \in \Can\). Since there exists $y\in\Can$ such that \(\mu(\{y\})=0\), there are clopen sets
\(U,V\) with $x\in U$ and $y\in V$ such that \(0<\mu(V) < c\) and \(\mu(U) < c + \mu(V)\). Hence, \(\mu(U) \geq c > \mu(V)\).  The subset condition yields
a clopen set \(W\subseteq U\) with  \(\mu(W) = \mu(V)\). We infer
that \(x \notin W\) (as \(\mu(W) < c\)). Hence \(\mu(W) \leq \mu(U \setminus
\{x\}) = \mu(U) - c < \mu(V)\), which contradicts our choice of \(W\),
and finishes the proof.
\end{proof}

Let $\Pp$ be the category of all finite partitions of the Cantor space $\Can$ into clopen sets (henceforth we refer to these as \emph{clopen partitions}), with surjections given by the refinement relations as morphisms.  Observe that $\Pp$ has amalgamation. In the sequel, the letter $\pi$ will always refer to morphisms in $\Pp$, e.g.,  $\pi^V_U\colon V \rightarrow U$. If there is no risk of confusion, we will write $\pi$ for $\pi^V_U$. In particular, a \emph{$\pi$-lift} is a lift by some refining partition.

Let $V$ be a group-like set. We define $\FF_V$ to be the category of pairs $(P,\mu)$, where $P\in \Pp$ and $\mu$ is a probability distribution on $P$ such that for every $x\in P$ we have $\mu(x)\in V$ and $\mu(x)>0$. 

In $\FF_V$, an \emph{$\FF_V$-morphism} between $(P_\nu,\nu)$ and $(P_\mu,\mu)$ is a surjection $p\colon P_\nu \to P_\mu$ in $\Pp$ such that for every $x\in P_\mu$ we have \(\mu(x) = \sum_{y \in p^{-1}(\{x\})} \nu(y)\). Clearly, every chain $((P_{\mu_n},\mu_n),\pi_n)$ in $\FF_V$ (i.e., $\pi_n$ are morphisms in $\Pp$ that are also morphisms in $\FF_V$) determines a full measure $\mu$ such that $\mu$ is defined on the inverse limit of $(P_{\mu_n},\pi_n)$ (which is a zero-dimensional compact metrisable space) and the set of clopen values of $\mu$ (the set of values that $\mu$
 attains on clopen sets) is a subset of $V$. Conversely, any full measure on $\Can$ (or more generally, on a zero-dimensional compact metrisable space) can be represented as such a chain. For a full measure $\mu$ on $\Can$, and $P,R \in \Pp$, we say that a surjection $p\colon R \to P$ is a \emph{$\mu$-morphism} if it is an $\FF_V$-morphism for restrictions of $\mu$ to $P$ and $R$. Note that for every $\mu$, every morphism $\pi$ in $\Pp$ determines $\mu$-morphism, but the converse is false.

A measure $\mu$ on $\Can$ satisfies subset condition if and only if it is $V_\mu$-homogeneous as defined in \cite{BieKuWa19}.

\begin{definition}\label{dfn:ultrahomo}
Let $\mu$ be a full measure on $\Can$ and let $V=V_\mu$ be the clopen values set of $\mu$. We say that $\mu$ is \emph{ultrahomogeneous} if for any $P, R \in \Pp$ such that there is a bijection $f\colon P \to R$ satisfying $\mu(x)=\mu(f(x))$ for $x\in P$ (for simplicity, we will call each such a function \(f\) a \emph{partial \(\mu\)-isomorphism}), there exists a homeomorphism $\varphi\in\Homeo(\Can,\mu)$ such that for every $x\in P$ it holds $\varphi[x]=f(x)$ where \(\varphi[x]\) denotes the image of \(x\) via \(\varphi\). 

We say that a measure $\mu$ on the Cantor space $\Can$ is \emph{maximal} if for every tuple $v_1,\ldots,v_n$ of non-zero elements of $V(\mu)$ such that $\sum_{i=1}^n v_i=1$ there exists a clopen partition $\{x_1,\ldots,x_n\}$ of $\Can$ such that $\mu(x_i)=v_i$ ($1\leq i\leq n$), and moreover for every $v,w\in V(\mu)$ we have $|v-w|\in V(\mu)$.
\end{definition}

\begin{proposition}\label{prop:fraissemeasure}
Let $V$ be a group-like set. Then $\FF_V$ is countable, directed, and has amalgamation. In particular, there exists a maximal and ultrahomogeneous measure on $\Can$ with clopen values set equal to $V$.
\end{proposition}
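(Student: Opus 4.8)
The plan is to check the three categorical hypotheses on $\FF_V$ by hand and then read off the measure from the \fra machinery recalled in Section~\ref{sect:prelim}. \emph{Countability} will be immediate: the clopen algebra of $\Can$ is countable, so $\Pp$ has only countably many objects; over each finite $P\in\Pp$ there are at most countably many distributions $P\to V$ (as $V$ is countable); and between two objects there are only finitely many surjections. For \emph{directedness} I would first note that the one-cell partition of mass $1$ is a terminal object of $\FF_V$ (here $1\in V$ is used), so that directedness becomes the special case of amalgamation over this object; alternatively one argues it directly, exactly as in the amalgamation step below with the whole partitions playing the role of the fibres (i.e.\ the case $z=1$ of Lemma~\ref{le:CommonPartition}).

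The heart of the matter is \emph{amalgamation}, and the point is that Lemma~\ref{le:CommonPartition} does the work fibrewise. Given $A=(P,\mu)$ together with morphisms $r_i\colon(Q_i,\nu_i)\to(P,\mu)$ for $i=0,1$, I would fix a cell $x\in P$ and observe that the fibres $r_0^{-1}(x)$ and $r_1^{-1}(x)$ present two decompositions of the single value $\mu(x)$ into positive elements of $V$. Applying Lemma~\ref{le:CommonPartition} with $z=\mu(x)$ produces values $z^x_1,\dots,z^x_{m_x}\in V$ summing to $\mu(x)$, together with two partitions of $\{1,\dots,m_x\}$ refining the two fibres respectively. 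Setting $S=\bigsqcup_{x\in P}\{(x,s):1\le s\le m_x\}$ with $\lambda(x,s)=z^x_s$, and defining $s_i\colon S\to Q_i$ cell-by-cell from these partitions, the mass identities of the Lemma say precisely that each $s_i$ is an $\FF_V$-morphism (surjective because every fibre cell carries positive $\nu_i$-mass), while $r_0\circ s_0=r_1\circ s_1$ holds since both send $(x,s)\mapsto x$. Thus $(S,\lambda)$ amalgamates $(Q_0,\nu_0)$ and $(Q_1,\nu_1)$ over $(P,\mu)$. The only analytic input is the realisation of abstract measured partitions by clopen partitions of $\Can$, which is harmless since every nonempty clopen subset of $\Can$ splits into any prescribed finite number of nonempty clopen pieces.

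With $\FF_V$ countable, directed and amalgamating, the \fra theorem quoted in Section~\ref{sect:prelim} yields a \fra chain $((P_n,\mu_n),\pi_n)$ whose inverse limit carries a full measure $\mu$ with $V_\mu\subseteq V$, and I would then verify three things. First, the limit is a Cantor space: the extension property of the \fra chain forces every cell to be split at some later level (apply it to the object refining a cell of mass $v$ into masses $v'$ and $v-v'$ with $v'\in V\cap(0,v)$, nonempty as $V$ is dense), so there are no isolated points. Second, $V_\mu=V$ and the partition clause of maximality both follow from cofinality: any tuple $v_1,\dots,v_n\in V\setminus\{0\}$ with $\sum_i v_i=1$ defines an object of $\FF_V$, and a chain lift of it exhibits a clopen partition of $\Can$ realising those masses (the remaining clause of maximality, closure of $V$ under $(v,w)\mapsto|v-w|$, is automatic because $V=[0,1]\cap(V+\ZZZ)$). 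Third, ultrahomogeneity in the sense of Definition~\ref{dfn:ultrahomo} is the translation of the homogeneity of the \fra limit: a partial $\mu$-isomorphism between clopen partitions $P$ and $R$ is an isomorphism of the corresponding objects of $\FF_V$, and the extension property, run back-and-forth, upgrades it to an automorphism of the limit, i.e.\ an element of $\Homeo(\Can,\mu)$ carrying $P$ onto $R$ as prescribed.

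I expect this last translation to be the main obstacle: one must set up the dictionary identifying $\FF_V$-isomorphisms of finite partitions and self-maps of the limit with $\mu$-preserving homeomorphisms of $\Can$, and confirm that the abstract inverse limit is genuinely the Cantor space. By contrast, the three categorical properties themselves reduce almost mechanically to Lemma~\ref{le:CommonPartition} once the fibrewise viewpoint is adopted.
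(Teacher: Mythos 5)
Your proposal is correct and follows essentially the same route as the paper: amalgamation via a fibrewise application of Lemma~\ref{le:CommonPartition} over each cell of the base partition (with the clopen realisation handled just as the paper does), directedness reduced to amalgamation over the one-cell object, and then a \fra chain whose cofinality yields maximality and $V_\mu=V$, with ultrahomogeneity read off from the homogeneity of the \fra limit, which the paper likewise delegates to the general theory in \cite{Kub22}.
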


\begin{proof}Clearly, $\FF_V$ is countable. Since directedness follows in this case from amalgamation, we only prove the latter.

Fix $(E_1,\mu_1)$, $(E_2,\mu_2)$, $(F,\nu)$ in $\FF_V$ and consider morphisms $f_i\colon (E_i,\mu_i)\to (F,\nu)$, $(i=1,2)$. Enumerate $F$ as $\{x_1,\ldots,x_l\}$. Fix $1\leq t\leq l$. Let \[f_1^{-1}(x_t)=\{y^t_1,\ldots,y^t_{n(t)}\}\subseteq E_1\quad\text{and}\quad f_2^{-1}(x_t)=\{z^t_1,\ldots,z^t_{m(t)}\}\subseteq E_2.\] By the assumption, we have \[\nu(x_t)=\sum_{i=1}^{n(t)} \mu_1(y^t_i)=\sum_{j=1}^{m(t)} \mu_2(z^t_j).\] Since $V$ is group-like, by Lemma~\ref{le:CommonPartition} there are $k(t)$ and $w^t_1,\ldots,w^t_{k(t)}\in V$ such that $\sum_{i=1}^{k(t)} w^t_i=\nu(x_t)$ and there are partitions $\{Y^t_i:1\le i\leq n(t)\}$ and $\{Z^t_j:1\le j\leq m(t)\}$ of $\{1,\ldots,k(t)\}$ so that for $1\le i\leq n(t)$ and $1\le j\leq m(t)$ it holds\[\mu_1(y^t_i)=\sum_{l\in Y^t_i} w^t_l,\quad\text{and}\quad\mu_2(z^t_j)=\sum_{l\in Z^t_j} w^t_l.\]

We take any clopen partition $G$ of $\Can$ with $k(1)+\ldots+k(l)$ elements. Then we enumerate elements of $G$ so that $G$ can be written as a disjoint union
\[G=\bigcup_{1\le t\leq l}\{g^t_1,\ldots,g^t_{k(t)}\}.\]
Finally, we set $(G,\mu_G)\in\FF_V$ so that  $\mu_G(g^t_i)=w^t_i$, for $1\le t\leq l$ and $1\le i\leq k(t)$. We define maps $p_i\colon (G,\mu_G)\to (E_i,\mu_i)$  $(i=1,2)$, by setting \[p_1(w^t_l)=y_i,\text{ where $1\le i\leq n(t)$ is such that }l\in Y^t_i\]
and
\[p_2(w^t_l)=z_j,\text{ where $1\le j\leq m(t)$ is such that }l\in Z^t_j.\]
We leave to the reader the straightforward verification that $p_1$ and $p_2$ are well-defined $\FF_V$-morphisms and $f_1\circ p_1=f_2\circ p_2$, thus the amalgamation is verified.

Let $((P_n,\mu_n),\pi_n)$ be a \fra chain in $\sigma \FF_V$. We claim that the inverse limit of $((P_n,\mu_n),\pi_n)$ is a measure on $\Can$ that is full and moreover maximal and ultrahomogeneous. It is straightforward that the inverse limit of $P_n$'s is infinite and has no isolated points, thus we identify it with $\Can$. Moreover, the inverse limit $\mu$ of $\mu_n$'s defines a measure on $\Can$. By the definition of $\FF_V$, $\mu$ is positive on every nonempty clopen set, therefore $\mu$ is full. Since $V$ is group-like, to check maximality, it suffices to check that for every tuple $v_1,\ldots,v_n$ of non-zero elements of $V$ such that $\sum_{i=1}^n v_i=1$ there exists a clopen partition $\{x_1,\ldots,x_n\}$ of $\Can$ such that $\mu(x_i)=v_i$ ($1\leq i\leq n$). To see this, take $(P_\nu,\nu)\in\FF_V$, such that $P_\nu=\{z_1,\ldots,z_n\}$ and $\nu(z_i)=v_i$ $(1\le i\leq n)$. Since $((P_n,\mu_n),\pi_n)$ is a \fra chain in $\sigma \FF_V$ we find $j$ such that there is a morphism $p\colon (P_{\mu_j},\mu_j)\to (P_\nu,\nu)$. Now, set $x_i=p^{-1}(z_i)$ ($1\leq i\leq n$). 

Finally, the fact that $\mu$ is ultrahomogeneous immediately follows from the fact that the \fra chain $((P_n,\mu_n),\pi_n)$ is homogeneous in the sense of \cite[Section 5]{Kub22}.
\end{proof}
\begin{theorem}\label{thm:fraisseisgood}
Let $\mu$ be a full measure on the Cantor space. Then $\mu$ is maximal and ultrahomogeneous if and only if it is good.
\end{theorem}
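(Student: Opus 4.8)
The plan is to prove both implications directly, using the Subset Condition as the bridge in each direction. The single combinatorial tool I will need repeatedly is a \emph{relative} form of maximality: if $C$ is a nonempty clopen set and $c_1,\dots,c_k$ are positive elements of $V_\mu$ with $\sum_i c_i=\mu(C)$, then $C$ admits a clopen partition into pieces of measures $c_1,\dots,c_k$. This follows from the Subset Condition by induction on $k$: since $c_1\in V_\mu$ there is a clopen set of measure $c_1<\mu(C)$, so the Subset Condition peels off a clopen $W_1\subseteq C$ with $\mu(W_1)=c_1$, after which one recurses on $C\setminus W_1$. The first clause of maximality is exactly this statement for $C=\Can$, and the second clause $|v-w|\in V_\mu$ is equally immediate from the Subset Condition: for $w<v$ in $V_\mu$ one embeds a clopen set of measure $w$ into one of measure $v$ and takes the complement.

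For the implication \emph{maximal and ultrahomogeneous $\Rightarrow$ good}, I would verify the Subset Condition directly. Given clopen $U,V$ with $u:=\mu(U)<\mu(V)=:v$, the nontrivial case is $u>0$ and $v<1$; if $u=0$ then $U=\varnothing$ by fullness and $W=\varnothing$ works, while if $v=1$ then $V=\Can$ and any clopen set of measure $u\in V_\mu$ works. The second clause of maximality gives $v-u\in V_\mu$, and $u,\,v-u,\,1-v$ are positive elements of $V_\mu$ summing to $1$, so the first clause yields a clopen partition $\{A,B,C\}$ of $\Can$ with $\mu(A)=u$, $\mu(B)=v-u$, $\mu(C)=1-v$. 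Now $\{V,\Can\setminus V\}$ and $\{A\cup B,\,C\}$ are clopen partitions related by a partial $\mu$-isomorphism (both pairs have measures $v$ and $1-v$), so ultrahomogeneity produces $\varphi\in\Homeo(\Can,\mu)$ with $\varphi[V]=A\cup B$; then $W:=\varphi^{-1}[A]\subseteq V$ is clopen with $\mu(W)=\mu(A)=u=\mu(U)$, as required.

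For the converse, \emph{good $\Rightarrow$ maximal and ultrahomogeneous}, maximality is handled by the two observations of the first paragraph, so the substance is ultrahomogeneity, which I would obtain by a back-and-forth argument. Fix a partial $\mu$-isomorphism $f\colon P\to R$ between clopen partitions. Choose sequences $P=P_0\prec P_1\prec\cdots$ and $R=R_0\prec R_1\prec\cdots$ that separately generate the topology of $\Can$, and build compatible partial $\mu$-isomorphisms $f_n$ extending $f$ by alternately refining on the domain and on the range. At a domain step, each $x\in P_n$ splits in $P_{n+1}$ into clopen pieces whose measures lie in $V_\mu$ and sum to $\mu(f_n(x))$; the relative maximality of the first paragraph lets me split $f_n(x)$ into matching clopen pieces, and assigning them to the pieces of $x$ defines $f_{n+1}$ while respecting the refinement maps. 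Range steps are symmetric. The compatible system $(f_n)$ then induces a homeomorphism $\varphi$ of the inverse limits, that is, of $\Can$, with $\varphi[x]=f(x)$ for $x\in P$; and since $\varphi$ preserves $\mu$ on the generating clopen algebra $\bigcup_n P_n$, it preserves $\mu$.

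The main obstacle is making this back-and-forth precise, and I expect three points to require care: that each $f_{n+1}$ is genuinely compatible with $f_n$ under the projection maps, so that $(f_n)$ defines a single map of inverse limits rather than an incoherent family; that interleaving the refinements makes both $(P_n)$ and $(R_n)$ generate the topology, so that the limit is a homeomorphism of $\Can$ itself and not merely of a quotient; and that agreement of $\varphi^*\mu$ with $\mu$ on the generating algebra $\bigcup_n P_n$ forces $\varphi^*\mu=\mu$ on all Borel sets. Each of these is routine once the relative-splitting lemma is in hand, and that lemma is the engine driving every refinement in the construction.
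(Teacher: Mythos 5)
Your proposal is correct, and the forward implication (maximal and ultrahomogeneous implies good) is essentially the paper's own argument: both produce a three-cell partition $\{A,B,C\}$ with measures $u$, $v-u$, $1-v$ via maximality (using the difference clause to get $v-u\in V_\mu$), then apply ultrahomogeneity to the two-cell coarsening to move it inside $V$. Where you genuinely diverge is the converse. The paper does \emph{not} argue directly: it invokes Akin's result that the clopen values set of a good measure is group-like, produces via the \fra{}category $\FF_V$ a maximal ultrahomogeneous measure $\nu$ with the same clopen values set (Proposition~\ref{prop:fraissemeasure}), observes $\nu$ is good by the already-proved implication, and then transfers maximality and ultrahomogeneity to $\mu$ by Akin's uniqueness theorem (two good measures with equal clopen values sets are homeomorphic), noting both properties are conjugation-invariant. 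You instead prove the converse from scratch: a relative-splitting lemma extracted from the subset condition (peel off clopen pieces of prescribed $V_\mu$-measures by induction), and then an explicit back-and-forth building a coherent system of partial $\mu$-isomorphisms along interleaved refining partition sequences whose limit is the desired measure-preserving homeomorphism. Your route is more elementary and self-contained: it needs neither Akin's uniqueness theorem nor the abstract homogeneity of \fra{}limits from \cite{Kub22} (indeed your back-and-forth is a concrete re-proof of that homogeneity in this setting), and it exhibits the subset condition itself, rather than group-likeness of $V_\mu$, as the engine of ultrahomogeneity. What the paper's route buys is brevity given its infrastructure and the conceptual point---central to the whole article---that good measures \emph{are} \fra{}limits in $\FF_V$; your route buys independence from \cite{Akin05} at the cost of the bookkeeping you correctly flag (coherence of the $f_n$ under projections, mesh of both partition sequences tending to zero so the limit is a homeomorphism of $\Can$ rather than a quotient, and extension of the measure identity from the generating clopen algebra to all Borel sets by a standard uniqueness-of-extension argument), all of which is routine as you say.
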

\begin{proof}
Suppose first that $\mu$ is maximal and ultrahomogeneous. We need to check that $\mu$ satisfies the subset condition. Let $V\subseteq [0,1]$ be the clopen values set of $\mu$. Let $A$ and $B$ be clopen subsets of $\Can$ such that $\mu(A)<\mu(B)$. We may suppose that $\mu(B)<1$, otherwise $B=\Can$ and there is nothing to prove. Since $\mu$ is maximal there exists a clopen partition $\{P_1,P_2,P_3\}$ of $\Can$ such that $\mu(P_1)=\mu(A)$, $\mu(P_2)=\mu(B)-\mu(A)$, and $\mu(P_3)=1-\mu(B)$. Define $f\colon \{P_1\cup P_2,P_3\}\to \{B,\Can\setminus B\}$ by $f(P_1\cup P_2)=B$ and $f(P_3)=\Can\setminus B$. Since $f$ is a morphism and $\mu$ is ultrahomogeneous there exists a homeomorphism $\varphi\in\Homeo(\Can,\mu)$ such that $\varphi[P_1\cup P_2]=B$. Set $A':=\phi[P_1]$. Then $A'\subseteq B$ and $\mu(A')=\mu(A)$. This shows that $\mu$ satisfies the subset condition. Since $\mu$ is a full measure by assumption, $\mu$ must be nonatomic, hence it is good.

Conversely, suppose that $\mu$ is good. Then by \cite[Proposition 2.4]{Akin05} the clopen values set $V$ of $\mu$ is group-like. So by Proposition~\ref{prop:fraissemeasure} there exists a maximal and ultrahomogeneous measure $\nu$ with clopen values set $V$. By the first part of this proof, $\nu$ is good. Since $\mu$ and $\nu$ are two good measures with the same clopen values set they must be conjugate (that is, there is $\varphi\in\Homeo(\Can)$ such that $\varphi^*(\mu)=\nu$) by \cite[Theorem 2.9 (a)]{Akin05} (see also, \cite[Theorem 2.3 (d)]{AkDoMauYi08}). It is plain that a measure homeomorphic to a maximal and ultrahomogeneous measure is again maximal and ultrahomogeneous, and so is $\mu$.
\end{proof}


In view of Theorem~\ref{thm:fraisseisgood}, ultrahomogeneous measures are a priori a weaker notion than good measures if they are not assumed to be maximal. We confirm this by providing examples of ultrahomogeneous measures that are not maximal, therefore they are not good. It may be interesting to study such measures and homeomorphisms preserving them in a similar fashion as for good measures. In a subsequent paper (that is, in the second part of the present article), we will propose a general approach to this issue. Among other things, we will give there an example of a full measure \(\mu\) on the Cantor space such that \(\mu\) is ultrahomogeneous and not good. Moreover, the action of \(G = \Homeo(\Can,\mu)\) on $\Can$ is minimal and \(G\) has the Rokhlin property.

\begin{exm}{non-good-ultra}
Let $V_1$ be the set of $3$-adic rationals inside $[0,1]$. Moreover, set $V_2:=\{0,1\}\cup \{n\cdot \alpha\; (\text{mod }1)\colon n\in\mathbb{Z}\}$, where $\alpha\in\mathbb{R}$ is some irrational number. Clearly, both $V_1$ and $V_2$ are group-like sets, and therefore there exist corresponding good measures $\mu'_i$ with clopen values set being equal to $V_i$, for $i=1,2$.

Write now the Cantor space as a disjoint union of two nonempty clopen sets $C_1$ and $C_2$. Let $\mu_1$ be the measure supported on $C_1$ that is equivalent to $1/3 \mu'_1$ via some homeomorphism between $\Can$ and $C_1$. Similarly, let $\mu_2$ be the measure supported on $C_2$ that is equivalent to $2/3 \mu'_2$ via some homeomorphism between $\Can$ and $C_2$. Finally, set $\mu:=\mu_1+\mu_2$. Clearly, $\mu$ is a full measure on the Cantor space. We claim it is ultrahomogeneous, but not maximal. For ultrahomogeneity, take two clopen partitions $\PP_1$ and $\PP_2$ such that there is a bijection $f:\PP_1\to\PP_2$ satisfying $\mu(P)=\mu(f(P))$, for every $P\in\PP_1$. Without loss of generality,
we may assume that every $P\in\PP_i$, for $i=1,2$, is either a subset of $C_1$ or a subset of $C_2$. Indeed, if $A_i,A'_i\subseteq C_i$, $i=1,2$, are clopen sets such that $\mu(A_1)+\mu(A_2)=\mu(A'_1)+\mu(A'_2)$ then $\mu(A_i)=\mu(A'_i)$ for $i=1,2$, since otherwise $\mu(A_1)-\mu(A'_1)$ would be an irrational number, a contradiction.

If $P\in\PP_1$ is a subset of $C_1$ then $\mu(P)$ is rational, therefore also $f(P)\subseteq C_1$. Analogously, if $P\in\PP_2$ is a subset of $C_2$ then also $f(P)\subseteq C_2$. Since $\mu'_i$, $i=1,2$, are ultrahomogeneous there exist measure preserving homeomorphisms $\phi_i\colon C_i\to C_i$, $i=1,2$, such that for every $P\in\PP_1$ a subset of $C_1$, resp. a subset of $C_2$, we have $\phi_1(P)=f(P)$, resp. $\phi_2(P)=f(P)$. These two homeomorphisms together form the desired measure preserving homeomorphism on $\Can$.

Finally, we show that $\mu$ is not maximal, therefore not good. The clopen values set $V$ of $\mu$ is $1/3 V_1+2/3 V_2$. Therefore $1/3\in V$. If $\mu$ were maximal there would exist a clopen partition $\{P_1,P_2,P_3\}$ such that $\mu(P_i)=1/3$, for $i=1,2,3$. It is clear that by the definition of $\mu$ such a partition does not exist. Note also that \(\Homeo(\Can,\mu)\) is quite rich, but its action is not transitive.
\end{exm}

\begin{exm}{PN-homo-bad}
Although ultrahomogeneity has a quite natural definition (introduced in
Definition~\ref{dfn:ultrahomo}) which may lead to
a supposition that the automorphism group of an ultrahomogeneous measure is
rich, actually this property does not even imply that this group is nontrivial.
To convince oneself of that, observe that there exists a full non-atomic measure
\(\mu\) on the \Cantor{} such that on the algebra of all clopen subsets of
\(\Can\) it is both rational-valued and one-to-one. Then each partial \(\mu\)-isomorphism coincides with the identity map and hence \(\mu\) is ultrahomogeneous. Hence, \(\Homeo(\Can,\mu)\) is trivial, as the only homeomorphism preserving $\mu$ is the identity. This measure cannot be maximal, as then it would be good, so it would be preserved by a non-trivial homeomorphism.

(It is worth mentioning here that Akin
\cite{Akin99} constructed an uncountable family $\mathcal{M}^*$ of full non-atomic
measures on the Cantor space $\Can$ such that if for some continuous map $h\dd
\Can \to \Can$ and $\mu_1,\mu_2\in\mathcal{M}^*$ we have $h_*(\mu_1)=\mu_2$, then
$\mu_1 = \mu_2$ and $h$ is the identity map. This is in sharp contrast with
a classical theorem of Oxtoby and Ulam \cite{OU1941} stating that any two full
non-atomic measures on a Euclidean ball are homeomorphic.) 


General ultrahomogeneous measures are studied in detail in the forthcoming second part of
the present article \cite{DKMN2}. We focus there on those of them whose automorphism groups act transitively on the Cantor space.
\end{exm}

For the purpose of the next result, we recall classical notions and introduce
a few auxiliary ones. Let \(\OPN{Prob}(\Can)\) stand for the simplex of all
(probability) measures on the \Cantor{}, equipped with the standard \emph{weak$^*$}
topology (that is, \(\OPN{Prob}(\Can)\) is a metrisable compact convex set in
which measures \(\mu_n\) tend to \(\mu\) iff the integrals \(\int_{\Can} f
\dint{\mu_n}\) tend to \(\int_{\Can} f \dint{\mu}\) for any real-valued
continuous function on \(\Can\); equivalently, \(\lim_{n\to\infty} \mu_n = \mu\)
iff \(\lim_{n\to\infty} \mu_n(U) = \mu(U)\) for all clopen sets \(U \subseteq
\Can\).) For a fixed group-like set \(V\) denote by
\(\OPN{Full}(V)\) the set of all full measures whose clopen values set is
contained in \(V\). We consider \(\OPN{Full}(V)\) with the weak$^*$ topology (that
is, the topology induced from the weak$^*$ one of \(\OPN{Prob}(\Can)\)), and with \emph{strong} topology, that is, the topology of pointwise convergence on clopen sets w.r.t. the discrete topology of \(V\). More precisely, the strong topology of \(\OPN{Full}(V)\) is a metrisable topology in which measures \(\mu_n \in \OPN{Full}(V)\) tend to \(\mu \in \OPN{Full}(V)\) iff for any clopen set \(U\)
the sequence \(\mu_n(U)\) is eventually constant and equal to \(\mu(U)\). Note that \(\OPN{Full}(V)\) with the weak$^*$  topology is non-Polish in general. 

Below we consider \(\OPN{Full}(V)\) with the natural action of \(\Homeo(\Can)\).

\begin{pro}{PN-generic-measure}
Let \(V\) and \(\mu\) be, respectively, a group-like set and a good measure whose clopen values set coincides with \(V\).
Further, set $H$ to be the conjugacy class of $\mu$, that is, \(H = \{h^*(\mu)\dd h \in \Homeo(\Can)\}\). Then:
\begin{enumerate}[\upshape(a)]
\item The strong topology is Polish and stronger than the
 weak$^*$ topology on \(\OPN{Full}(V)\).
\item In the weak* topology, \(\OPN{Full}(V)\) is an \(\ffF_{\sigma\delta}\) subset of
 \(\OPN{Prob}(\Can)\).
\item In the weak$^*$ topology, \(H\) 
is a dense \(\ffF_{\sigma\delta}\)
 subset of \(\OPN{Full}(V)\).
\item In the strong topology, \(H\) is a unique dense \(\ggG_{\delta}\) conjugacy class in
 \(\OPN{Full}(V)\).
\end{enumerate}
\end{pro}
\begin{proof}
Part (a) is standard and left to the reader (recall that \(V\) is countable). To
show (b), observe that \(\OPN{Full}(V)\) coincides with \(\bigcap_U \bigcup_v
K(U,v)\) where \(U\) and \(v\) run over all, respectively, nonempty clopen
subsets of \(\Can\) and positive numbers from \(V\), and \(K(U,v)\) is
the (compact) set of all \(\nu \in \OPN{Prob}(\Can)\) satisfying \(\nu(U) = v\).
Now we pass to both (c) and (d). Note that a measure \(\lambda \in
\OPN{Full}(V)\) belongs to \(H\) iff \(\lambda\) is good and its clopen values
set is \(V\), or---equivalently---if for any nonempty clopen set \(U\) and each
\(v \in V\) that is less than \(\lambda(U)\) there exists a clopen set \(W\subseteq U\)
such that \(\lambda(W) = v\). In other words, \(H\) coincides with
\(\bigcap_{U,v} \bigcup_W E(U,v,W)\) where \(U\) and \(W\) run over all
nonempty clopen sets such that \(W \subseteq U\), and \(v\) runs over all
positive numbers from \(V\), and \(E(U,v,W)\) consists of all measures \(\nu \in
\OPN{Full}(V)\) for which \(\nu(U) \leq v\) or \(\nu(W) = v\). It is readily
seen that \(E(U,v,W)\) is closed in the weak$^*$ topology and open in the strong
one. So, it remains to show that \(H\) is dense in the strong topology (thanks
to (a); note also that uniqueness in part (d) follows from the Baire category
theorem). To this end, fix any measure \(\nu \in \OPN{Full}(V)\) and observe
that the sets of the form
\[B(\nu,P) = \{\rho \in \OPN{Full}(V)\dd\ \rho(p) = \nu(p)\ (\forall p \in P)\},\]
where \(P\) is a clopen partition of \(\Can\), form a basis of neighbourhoods of
\(\nu\) in the strong topology. But if \(P\) is a clopen partition and $\nu\in\OPN{Full}(V)$, then it
follows from both ultrahomogeneity and maximality of \(\mu\) that there exists
\(h \in \Homeo(\Can)\) such that \(h^*(\mu)\) extends the restriction of \(\nu\)
to \(P\), hence $h^*(\mu)\in B(\nu,P)$ and we are done.
\end{proof}

\section{Amalgamation and conjugacy classes}\label{sec:amalgam-1}

An \emph{equi-summed} matrix is a real-valued matrix $A=(a_{p,p'})_{p,p' \in P}$ with nonnegative entries, where $P$, denoted also by $P_A$ , is a finite set, such that
\begin{equation}\label{cond:equi-sum}
\sum_{p' \in P} a_{p,p'}= \sum_{p' \in P} a_{p',p} \qquad \text{ for all }p \in P.    
\end{equation}


An equi-summed matrix is  \emph{balanced} if it satisfies 
\begin{equation}\label{eq:normalised} 
\sum_{p,p' \in P} a_{p,p'}=1.
\end{equation}

We routinely identify $A=(a_{p,p'})_{p,p' \in P}$ with the corresponding labelled directed graph $G(A)$. That is, $G(A)$ is the graph whose vertex set is the subset of $P_A$ corresponding to non-zero rows of $A$ and labelled directed edges of $G(A)$ correspond to non-zero entries of $A$ meaning that $a_{p,p'}>0$ if and only if there is an edge in $G(A)$ from $p$ to $p'$ labelled by $a_{p,p'}$ and denoted $p\stackrel{a_{p,p'}}{\rightarrow} p' $ (or simply $p\to p'$ if the label is irrelevant).
Note that \eqref{cond:equi-sum} says that $A$ is equi-summed if for every vertex $p$ of $G(A)$ the sum of labels of all edges terminating at $p$ is equal to the sum of labels of edges outgoing from $p$. 

Let $A=(a_{p,p'})_{p,p' \in P}$, $B=(b_{q,q'})_{q,q' 
\in Q}$ be balanced matrices. A \emph{balanced morphism} $f$ from $B$ to $A$ is a surjection $f\colon Q \to P$ such that 
for every $p,p' \in P$ we have
\[ a_{p,p'}=\sum_{\substack{q \in f^{-1}(p),\\ q' \in f^{-1}(p')}} b_{q,q'}.\]

A \emph{cycle matrix} is an equi-summed matrix $C$ such that $G(C)$ is a single directed cycle. Since $C$ is equi-summed, all non-zero entries in $C$ are equal, and we call this number the \emph{weight} of $C$. The following seems to be a folklore fact:

\begin{lemma}\label{lem:MM-cycle}
Every equi-summed matrix $A$ is a sum of cycle matrices, whose weights belong to the additive subgroup of $\mathbb{R}$ generated by the entries of $A$.
\end{lemma}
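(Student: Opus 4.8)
The plan is to argue by induction on the number of nonzero entries of $A$, peeling off one cycle matrix at a time. The underlying idea is the classical flow-decomposition principle: the equi-summed condition \eqref{cond:equi-sum} says precisely that $G(A)$ is a \emph{circulation}, and every circulation splits into directed cycles.

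First I would dispose of the base case: if $A$ is the zero matrix there is nothing to prove (the empty sum). So assume $A$ has a positive entry $a_{p_0,p_1}>0$, and build a directed walk $p_0\to p_1\to p_2\to\cdots$ along positive edges. The key observation is that the walk can always be continued: having arrived at a vertex $p_k$ ($k\ge 1$) along an edge of positive weight, the in-sum at $p_k$ is positive, so by \eqref{cond:equi-sum} the out-sum at $p_k$ is positive as well, and hence some outgoing edge carries positive weight. As $P$ is finite, the walk must eventually revisit a vertex; the first repetition isolates a directed cycle $q_0\to q_1\to\cdots\to q_{\ell-1}\to q_0$ on which every edge has positive weight.

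Next I would set $w=\min_i a_{q_i,q_{i+1}}$ (indices mod $\ell$) and let $C$ be the cycle matrix of weight $w$ supported on exactly these edges. Then $C$ is equi-summed, and I claim $A'=A-C$ is again an equi-summed matrix with nonnegative entries: nonnegativity holds because $w$ was chosen minimal along the cycle, and the equi-summed property is preserved because at each vertex $q_i$ on the cycle we subtract $w$ from exactly one incoming and one outgoing edge, leaving the in-sum and out-sum balanced (vertices off the cycle are untouched). Moreover $A'$ has strictly fewer nonzero entries than $A$, since the minimizing edge of the cycle drops to $0$.

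The one point that needs care --- and the only real content beyond the folklore decomposition --- is the bookkeeping on weights. Since $w=a_{q_i,q_{i+1}}$ for the minimizing index $i$, the weight $w$ is itself an entry of $A$, so it lies in the subgroup $\langle A\rangle\subseteq\RR$ generated by the entries of $A$. The entries of $A'$ are either entries of $A$ (off the cycle) or differences $a_{q_i,q_{i+1}}-w$ of entries of $A$ (on the cycle), so they all lie in $\langle A\rangle$; in particular $\langle A'\rangle\subseteq\langle A\rangle$. Applying the induction hypothesis to $A'$ expresses it as a sum of cycle matrices whose weights lie in $\langle A'\rangle\subseteq\langle A\rangle$, and together with $C$ this yields the desired decomposition $A=C+A'$ with all weights in $\langle A\rangle$. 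I do not expect any serious obstacle: the finiteness of $P$ makes the walk construction terminate, and the containment $\langle A'\rangle\subseteq\langle A\rangle$ is exactly what keeps the induction from enlarging the group of admissible weights.
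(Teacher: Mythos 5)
Your proof is correct and follows essentially the same route as the paper's: induction on the number of nonzero entries, extracting a positive-weight directed cycle guaranteed by the equi-summed condition, and subtracting the cycle matrix of minimal weight along it. The only difference is cosmetic --- you spell out the walk argument for finding the cycle and the containment $\langle A'\rangle\subseteq\langle A\rangle$, both of which the paper leaves implicit, and you start the induction at the zero matrix rather than at one nonzero entry.
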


\begin{proof}
The proof goes by induction on the number $m$ of non-zero entries of $A$. For $m=1$, this is obvious. Suppose the lemma is true for some $m$, and let $A$ be an equi-summed matrix with $m+1$ non-zero entries. Observe that \eqref{cond:equi-sum} ensures that there there is a cycle in $G(A)$, i.e., there are $k\ge 0$ and $i_0,i_1,\ldots,i_k=i_0\in P_A$ such that $i_m\neq i_n$ for $0\le m<n<k$ and $$a_{i_0,i_1}, a_{i_1,i_2}, \ldots ,a_{i_{k-1},i_k} = a_{i_{k-1},i_0}>0.$$ Let $a=\min \{a_{i_j,i_{j+1}}: 0\le j < k \}$, and let $C$ be the cycle matrix  corresponding to the cycle 
\[i_0\to i_1\to i_2\to \ldots \to i_{k-1}\to i_k = i_0 \]
with weight $a$. Then $A'=A-C$ has at most $m$ non-zero entries, and it is equi-summed. Clearly, all the entries in $A'$ are in the subgroup generated by the entries of $A$.  By the inductive assumption, $A'$ is a sum of cycle matrices with weights in the subgroup generated by the entries of $A$.  
\end{proof}

Let $V$ be a group-like set, let $((P_n,\mu_n),\pi_n)$ be a \fra chain from $\FF_V$, and let $\mu_V$ be the good measure on $\Can$ determined by $((P_n,\mu_n),\pi_n)$. Let $\Mm_V$ be the category of all $V$-valued balanced matrices $A=(a_{p,p'})_{p,p' \in P}$ such that $P$ is a clopen partition of $\Can$ and $A$ is \emph{compatible with $\mu_V$} meaning that
\begin{equation}\label{eqn:A-compatible+}
\sum_{p' \in P} a_{p,p'}=\mu_V(p) \qquad (\forall p \in P).
\end{equation}
If $P$ and $Q$ are different partitions of $\Can$, then two matrices $A=(a_{p,p'})_{p,p' \in P}$ and $B=(b_{q,q'})_{q,q' \in Q}$ in $\Mm_V$ are considered different, although it may happen that there is a bijection $\psi\colon P\to Q$ such that 
\[
a_{p,p'}=b_{\psi(p),\psi(p')} \qquad (\forall p,p' \in P).
\]
Since we consider only clopen partitions with nonempty cells, all rows of $A=(a_{p,p'})_{p,p' \in P}\in\Mm_V$ are non-zero. 

Morphisms in $\Mm_V$ are balanced morphisms that are also $\mu_V$-morphisms between compatible partitions. In other words, a balanced morphism $f$ between matrices $B$ and $A$ in 
$\Mm_V$ is a morphism in $\Mm_V$ if the transformation between the sets of indices $P_B\to P_A$ that determines $f$ is given by an $\mu_V$-morphism 
(see Section~\ref{sec:good-ultra}). Observe that if $B\in\Mm_V$ 
 and $r\colon P_B \to P$, where $P \in \Pp$, is a $\mu_V$-morphism, then there is $A=(a_{p,p'})_{p,p' \in P}\in\Mm_V$ such that $A$ is compatible with $P$ and and $r$ becomes an $\Mm_V$-morphism that maps $B$ onto $A$.

Let $\Cc_V \subseteq \Mm_V$ be the subcategory consisting of balanced matrices $A\in\Mm_V$ such that there is a single non-zero entry for each row of $A$.
For $A\in\Cc_V$ we easily see that $G(A)$ consists of disjoint cycles, that is, one can partition vertices of $G(A)$ into disjoint sets $\{S_1,\ldots,S_d\}$ so that $G(A)$ restricted to $S_j$ is a single directed cycle (of size $|S_j|$). 

Let $A=(a_{p,p'})_{p,p' \in P}$ be a balanced $V$-valued square matrix with non-zero rows. Since $\mu_V$ is maximal, using \eqref{eq:normalised} we easily find a partition $Q\in \Pp$ such that the elements of $Q$ are in a one-to-one correspondence with indices of $P$ of $A$ and \eqref{eqn:A-compatible+} holds.


Therefore every $V$-valued balanced matrix $A=(a_{p,p'})_{p,p' \in P}$ with non-zero rows is $\mu_V$ compatible with some clopen partition of $\Can$. We adopt a convention that for $A\in\Mm_V$ we write $P_A$ to denote a $\mu_V$-compatible partition of $\Can$ that is the set of indices of $A$. 



Because $\mu_V$ is maximal, Lemma \ref{lem:MM-cycle} immediately gives: 

\begin{corollary}\label{cor:MM-cofinal}
Let $V$ be group-like. Then $\Cc_V$ is cofinal in $\Mm_V$.
\end{corollary}

Given $R,Q\in\Pp$ and $\mu_V$-morphism $p\colon R \rightarrow Q$, we set
\[ [p]=\{ f \in \Homeo(\Can,\mu_V): f[r] \subseteq p(r) \,(\forall r \in R) \}. \]
In other words, $[p]$ consists of those $f \in  \Homeo(\Can,\mu_V)$ that send each cell $r$ of the partition $R$ into the cell $p(r)$ of the partition $Q$.

The collection $[p]$ where $p$ runs over all $\mu_V$-morphisms is a basis for the compact-open topology on $\Homeo(\Can,\mu_V)$. It is a standard fact in the theory of \fra \ limits (see \cite{Kub22}), that every morphism extends to an automorphism of the limit, i.e., $[p] \neq \emptyset$ for every $\mu_V$-morphism $p$.\par
We call \(\varphi \in \Homeo(\Can,\mu_V)\)
\emph{compatible} with \(A = (a_{p,p'})_{p,p' \in P} \in \Mm_V\) if \(\mu_V(\varphi[p] \cap p') = a_{p,p'}\) for all \(p, p' \in P\).
For every $A\in \Mm_V$ we set
\[
[A]=\{\varphi\in \Homeo(\Can,\mu_V):\varphi\text{ is compatible with $A$}\}.
\]
We distinguish a special class among all $\Mm_V$-morphism.  
We say that $B$ is a \emph{$\pi$-lift} of $A$ if it is a $\pi^{P_B}_{P_A}$-lift of $A$, that is, $B$ is a lift of $A$ via $\Mm_V$ morphism determined by $\pi^{P_B}_{P_A}$. Clearly, $[A] \supseteq [B]$ if $B \in \Mm_V$ is a $\pi$-lift of $A$.

\begin{proposition}
\label{pr:Basis}
The family $\{[A]: A \in \Mm_V\}$ is a basis for the compact-open topology on $\Homeo(\Can,\mu_V)$. Moreover, for every $A \in \Mm_V$, the family 
$$\{[B]: B  \text{ is a $\pi$-lift of } A \}$$
is a basis for $[A]$.
\end{proposition}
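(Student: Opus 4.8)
The plan is to compare the candidate family $\{[A]: A \in \Mm_V\}$ with the known basis $\{[p]\}$, where $p$ ranges over $\mu_V$-morphisms, using throughout the following dictionary: to a homeomorphism $\varphi \in \Homeo(\Can,\mu_V)$ and a clopen partition $P$ one associates the matrix $A_\varphi^P = (\mu_V(\varphi[p] \cap p'))_{p,p' \in P}$. One checks directly (from measure-preservation and the clopen values set being $V$) that $A_\varphi^P \in \Mm_V$ and that $\varphi \in [A_\varphi^P]$; this is the object that will witness every neighbourhood produced below. The one computational engine I would use repeatedly is the re-summation identity: if $P'$ refines $P$ and $\psi \in \Homeo(\Can,\mu_V)$, then $\mu_V(\psi[p] \cap p') = \sum_{s \subseteq p,\, s' \subseteq p'} \mu_V(\psi[s] \cap s')$, the sum running over cells $s,s' \in P'$, which holds because $\psi[p] \cap p'$ is the disjoint union of the sets $\psi[s] \cap s'$.

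For the first assertion I would prove two containments. First, each $[A]$ is open: given $A = (a_{p,p'})_{p,p' \in P}$ and $\varphi \in [A]$, refine $P$ to $R = \{p \cap \varphi^{-1}[p']\}$ and let $q\colon R \to P$ send $p \cap \varphi^{-1}[p']$ to $p'$. One checks that $q$ is a $\mu_V$-morphism (the fibre sums reproduce $\mu_V$ via the equi-summed and compatibility conditions), that $\varphi \in [q]$, and that $[q] \subseteq [A]$: any $\psi \in [q]$ sends $p \cap \varphi^{-1}[p']$ into $p'$, forcing $\psi[p] \cap p' = \psi[p \cap \varphi^{-1}[p']]$ and hence $\mu_V(\psi[p] \cap p') = \mu_V(p \cap \varphi^{-1}[p']) = a_{p,p'}$. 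Conversely, given a basic set $[p]$ with a $\mu_V$-morphism $p\colon R \to Q$ and $\varphi \in [p]$, I would take $S$ to be the common refinement of $R$ and $Q$ and set $A = A_\varphi^S$. Then $\varphi \in [A]$, and $[A] \subseteq [p]$: for a cell $s \subseteq r$ (with $r \in R$) the entry $a_{s,s'}$ vanishes whenever $s'$ lies outside $p(r)$, because $\varphi[r] \subseteq p(r)$; so any $\psi \in [A]$ has $\mu_V(\psi[s] \cap s') = 0$, whence $\psi[s] \cap s' = \emptyset$ by fullness, giving $\psi[r] \subseteq p(r)$ and thus $\psi \in [p]$. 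Since the $[p]$ form a basis, these two facts show $\{[A]\}$ is a basis.

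For the second assertion, fix $A$ on $P$ and $\varphi \in [A]$. Since $\{[C]: C \in \Mm_V\}$ is a basis, it suffices, given any $C$ on $P_C$ with $\varphi \in [C]$, to produce a $\pi$-lift $B$ of $A$ with $\varphi \in [B] \subseteq [C]$ (the inclusion $[B] \subseteq [A]$ then holds automatically, as already noted in the text). Let $P'$ be the common refinement of $P$ and $P_C$ and put $B = A_\varphi^{P'}$. The geometric refinement $P' \to P$ exhibits $B$ as a $\pi$-lift of $A$: applying the re-summation identity to $\varphi$ collapses the entries $b_{s,s'}$ to $a_{p,p'}$ upon summing over $s \subseteq p$, $s' \subseteq p'$. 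The same identity applied to an arbitrary $\psi \in [B]$, now grouped according to $P_C$, yields $\mu_V(\psi[p_C] \cap p_C') = c_{p_C,p_C'}$, i.e. $[B] \subseteq [C]$. This realises the $\pi$-lifts of $A$ as a neighbourhood basis at every point of $[A]$, which is exactly what is required.

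The step I expect to be the main obstacle is the inclusion $[A] \subseteq [p]$ in the first assertion, because $\mu_V$-morphisms need not be geometric refinements, so the partitions $R$ and $Q$ are a priori unrelated; the resolution is precisely the passage to their common refinement $S$ together with the use of fullness to upgrade the measure-theoretic vanishing $\mu_V(\psi[s] \cap s') = 0$ to the topological conclusion $\psi[s] \cap s' = \emptyset$.
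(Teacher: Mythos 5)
Your proof is correct and takes essentially the same route as the paper: fix a basic neighbourhood, pass to a common refinement of the partitions involved, take as witness the matrix $\bigl(\mu_V(\varphi[s]\cap s')\bigr)_{s,s'}$, and use fullness of $\mu_V$ to upgrade $\mu_V(\psi[s]\cap s')=0$ to $\psi[s]\cap s'=\emptyset$ (exactly the point behind the paper's parenthetical remark that only the non-zero entries matter). The additional details you supply --- the explicit $\mu_V$-morphism witnessing openness of each $[A]$ and the re-summation identity for the $\pi$-lift case --- simply flesh out steps the paper's terse proof declares clear or ``proved in the same way.''
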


\begin{proof}
Clearly, for every $A \in \Mm_V$ the set $[A]$ is clopen in $\Homeo(\Can,\mu_V)$. Fix $f \in \Homeo(\Can,\mu_V)$. Let $p\colon R \rightarrow P$ be such that $f \in [p]$. Choose a common refinement \(Q \in \Pp\) of \(R\) and \(P\). Define $A=(a_{q,q'})_{q,q'\in Q}\in \Mm_V$ by $a_{q,q'} = \mu_V(f[q] \cap q')$ where $q, q' \in Q$. It is clear that $f\in [A]$ and $[A]\subseteq [p]$ (actually only the pairs \((q,q')\) of indices for which \(a_{q,q'}\) is non-zero matter). The other statement is proved in the same way.
\end{proof}

\begin{lemma}
\label{le:GoingUp}
Let $A \in \Cc_V$, and, for some $R \in \Pp$, let $p\colon R \rightarrow P_A$ be a $\mu_V$-morphism. There is $B \in \Mm_V$ such that $P_B=R$ and $p$ is an $\Mm_V$ morphism. 
from $B$ onto $A$.
\end{lemma}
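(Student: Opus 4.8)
The plan is to lift $A$ through $p$ one cycle-edge at a time, using Lemma~\ref{le:CommonPartition} to build each block and then assembling the blocks into a single matrix $B$ supported off the ``diagonal blocks'' forbidden by the shape of $A$.

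First I would record the combinatorial shape of $A \in \Cc_V$. Since every row of $A$ is non-zero with a single non-zero entry and $A$ is balanced (hence equi-summed), every vertex of $G(A)$ has out-degree $1$; moreover no vertex can have in-degree $0$, since a zero column would force $\mu_V(P)=0$, impossible as $\mu_V$ is full. Counting then shows every in-degree equals $1$ as well, which is exactly the cycle description already noted for $\Cc_V$. Equivalently, there is a permutation $\sigma$ of $P_A$ with $a_{P,P'}=\mu_V(P)$ when $P'=\sigma(P)$ and $a_{P,P'}=0$ otherwise; comparing the unique non-zero entry of column $\sigma(P)$ with the equi-summed/compatibility conditions gives $\mu_V(\sigma(P))=\mu_V(P)$, so $\mu_V$ is constant along cycles.

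Next I would treat each edge $P \to \sigma(P)$ separately. Writing the fibers as $p^{-1}(P)=\{r_1,\ldots,r_k\}$ and $p^{-1}(\sigma(P))=\{r'_1,\ldots,r'_l\}$, the fact that $p$ is a $\mu_V$-morphism gives $\sum_{i}\mu_V(r_i)=\mu_V(P)=\mu_V(\sigma(P))=\sum_{j}\mu_V(r'_j)$. Applying Lemma~\ref{le:CommonPartition} to the two $V$-valued decompositions $(\mu_V(r_i))_i$ and $(\mu_V(r'_j))_j$ of the common value $\mu_V(P)$ yields $z_1,\ldots,z_m\in V$ summing to $\mu_V(P)$ together with partitions $\{X^0_i\}_i$ and $\{X^1_j\}_j$ of $\{1,\ldots,m\}$ realizing the two decompositions. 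Setting $b_{r_i,r'_j}:=\sum_{s\in X^0_i\cap X^1_j}z_s$ produces a non-negative block whose $i$-th row sum is $\mu_V(r_i)$, whose $j$-th column sum is $\mu_V(r'_j)$, and whose entries lie in $V$: each entry is a sum of elements of the group $V+\ZZZ$ that lies in $[0,1]$, hence in $V=[0,1]\cap(V+\ZZZ)$. I would then assemble $B=(b_{r,r'})_{r,r'\in R}$ by placing, for each $P$, this block on the rows $p^{-1}(P)$ and columns $p^{-1}(\sigma(P))$, and setting $b_{r,r'}=0$ whenever $p(r')\neq\sigma(p(r))$. Since each row $r$ meets exactly one block (the one for $p(r)$), its total is $\mu_V(r)$, giving compatibility \eqref{eqn:A-compatible+}; since each column $r'$ likewise meets exactly one block, with column sum $\mu_V(r')$, the equi-summed condition \eqref{cond:equi-sum} holds, and summing over rows gives \eqref{eq:normalised}. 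For the morphism condition, $\sum_{r\in p^{-1}(P),\,r'\in p^{-1}(P')}b_{r,r'}$ equals the total block mass $\mu_V(P)$ when $P'=\sigma(P)$ and $0$ otherwise, matching $a_{P,P'}$; combined with the hypothesis that $p$ is a $\mu_V$-morphism, this shows $p$ is an $\Mm_V$-morphism from $B$ onto $A$.

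I expect the only real subtlety to be the bookkeeping that makes the global compatibility and equi-summed conditions automatic: this works precisely because $A\in\Cc_V$ forces the support of $B$ into disjoint row-blocks and disjoint column-blocks indexed by the permutation $\sigma$, so the two families of marginal constraints decouple into the independent per-edge applications of Lemma~\ref{le:CommonPartition}. The one point genuinely requiring care is verifying that the block entries remain in $V$ rather than merely in $V+\ZZZ$, which is exactly where group-likeness (closure under those sums that stay inside $[0,1]$) is used.
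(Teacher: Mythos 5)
Your proposal is correct and takes essentially the same route as the paper's proof: both treat each cycle edge of $A$ separately, apply Lemma~\ref{le:CommonPartition} to the two fibre decompositions of the common value $\mu_V(P)=\mu_V(\sigma(P))$, and define the block entries as $b_{r,r'}=\sum_{s\in X^0_i\cap X^1_j}z_s$ with all remaining entries set to zero. You merely make explicit some points the paper leaves implicit --- the permutation structure of $G(A)$, the constancy of $\mu_V$ along cycles (the paper's ``weight $1/m$'' is literal only for a single-cycle matrix), and the verification that the block entries lie in $V$ via group-likeness --- all of which is sound.
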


\begin{proof} It is enough to consider each cycle separately. 
Suppose $x_0\to \ldots\to x_m \in P_A$ is a cycle in $G(A)$ (i.e., $x_0=x_m)$, where $m \ge 1$ with weight $1/m$. 



Fix $0\le k<m$ and set $Y^0=p^{-1}(x_k)$ and $Y^1=p^{-1}(x_{k+1})$ (recall that $x_0=x_m$). 
Take $i=0,1$. For each $y \in Y^i\subseteq R$ define $x^i_y=\mu_V(y)\in V$. 
Note that
\[
\sum_{y\in Y^i}x^i_y=\frac1m.
\]
Use Lemma \ref{le:CommonPartition} to find the set $\{z_1, \ldots, z_n\}\subseteq V$ and two partition $X^0$ and $X^1$ of $\{1, \ldots, n\}$ such that for $i=0,1$ and every $y\in Y^i$ there is the unique cell $Z^i_y\in X^i$ such that
\[
x^i_y=\mu_V(y)=\sum_{j\in Z^i_y}z_j.
\]
For every $y \in Y^0$ and $y' \in Y^1$ we set
\[
b_{y,y'}=\sum_{ j \in Z^0_y\cap Z^1_{y'}} z_j.
\]
Considering all $0\le k <m$ and setting all undefined entries as 0, we get the required $B=(b_{y,y'})_{y,y'\in R}\in\Mm_V$. 
\end{proof}

\begin{lemma}
\label{le:ReverseProjection}
Let 
$p\colon B\to A$ be an $\Mm_V$-morphism. There 
are $C\in\Mm_V$ and $\Mm_V$-morphism $r\colon C\to B$ such that $p \circ r=\pi^{P_C}_{P_A}$.

\end{lemma}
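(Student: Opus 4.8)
The plan is to realize the combinatorial weighted data of $B$ as an \emph{honest} refinement of $P_A$ and then transport the matrix $B$ along the resulting relabeling. The only role played by $p$ is through its underlying $\mu_V$-morphism $P_B\to P_A$: for each cell $a\in P_A$ we have $\sum_{b\in p^{-1}(a)}\mu_V(b)=\mu_V(a)$, with all these values lying in $V$ and being positive. So the whole difficulty is concentrated in the following geometric step: to carve each clopen cell $a$ into clopen pieces, indexed by $p^{-1}(a)$, whose measures are exactly the numbers $\mu_V(b)$, $b\in p^{-1}(a)$. Once this is done, the map $r$ will be a measure-preserving bijection and $C$ will be a mere re-indexing of $B$.

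To perform that step I would invoke goodness of $\mu_V$ in the form ``maximal plus ultrahomogeneous''. First, since the tuple $(\mu_V(b))_{b\in P_B}$ consists of nonzero elements of $V$ summing to $1$, maximality of $\mu_V$ yields a clopen partition $\{x_b:b\in P_B\}$ of $\Can$ with $\mu_V(x_b)=\mu_V(b)$. Grouping these cells according to $p$, i.e.\ putting $X_a=\bigcup_{b\in p^{-1}(a)} x_b$, gives a clopen partition $\{X_a:a\in P_A\}$ with $\mu_V(X_a)=\sum_{b\in p^{-1}(a)}\mu_V(b)=\mu_V(a)$ for every $a$. The assignment $X_a\mapsto a$ is then a partial $\mu_V$-isomorphism between two partitions of $\Can$, so ultrahomogeneity of $\mu_V$ produces $\varphi\in\Homeo(\Can,\mu_V)$ with $\varphi[X_a]=a$ for all $a\in P_A$. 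Now I set $P_C:=\{\varphi[x_b]:b\in P_B\}$. Because $\varphi[x_b]\subseteq\varphi[X_{p(b)}]=p(b)$, the partition $P_C$ refines $P_A$, and the cell of $P_A$ containing $\varphi[x_b]$ is exactly $p(b)$. Defining $r\colon P_C\to P_B$ by $r(\varphi[x_b])=b$ gives a bijection with $\mu_V(\varphi[x_b])=\mu_V(x_b)=\mu_V(b)$, hence a $\mu_V$-morphism; and $p(r(\varphi[x_b]))=p(b)=\pi^{P_C}_{P_A}(\varphi[x_b])$, so $p\circ r=\pi^{P_C}_{P_A}$ as required.

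It then remains to upgrade $r$ to an $\Mm_V$-morphism by equipping $P_C$ with the correct matrix. Since $r$ is a bijection I would simply transport $B$, setting $C=(c_{x,x'})_{x,x'\in P_C}$ with $c_{\varphi[x_b],\varphi[x_{b'}]}=b_{b,b'}$. A routine verification then shows that $C$ is $V$-valued, equi-summed and balanced (all inherited directly from $B$), and that $C$ is compatible with $\mu_V$, since $\sum_{x'}c_{\varphi[x_b],x'}=\sum_{b'}b_{b,b'}=\mu_V(b)=\mu_V(\varphi[x_b])$. With this choice the balanced-morphism identity $b_{b,b'}=\sum_{x\in r^{-1}(b),\,x'\in r^{-1}(b')}c_{x,x'}$ holds trivially, so $r\colon C\to B$ is a genuine $\Mm_V$-morphism.

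The step I expect to be the only real obstacle is the existence of the refinement realizing the prescribed cell-by-cell measures; this is precisely where goodness of $\mu_V$ is indispensable, maximality producing a global partition of $\Can$ with the desired weights and ultrahomogeneity pushing it inside the individual cells of $P_A$. (Alternatively one could carve each $a$ directly by repeated use of the subset condition, but the maximality-plus-ultrahomogeneity route fits the framework of this paper more naturally.) Everything past that point is bookkeeping; the two things worth double-checking are that $r$ is genuinely onto $P_B$, which is clear as it is a bijection, and that $p\circ r=\pi^{P_C}_{P_A}$ is an identity of maps $P_C\to P_A$, which is immediate from the grouping $X_a=\bigcup_{p(b)=a}x_b$.
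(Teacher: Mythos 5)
Your proof is correct, but it takes a genuinely different route from the paper's. The paper's proof first reduces to the case $B \in \Cc_V$ via Corollary~\ref{cor:MM-cofinal}, then invokes the \fra property of the defining chain $((P_n,\mu_n),\pi_n)$ to produce $n$ and a $\mu_V$-morphism $r\colon P_n \to P_B$ with $p \circ r = \pi^{P_n}_{P_A}$, and finally applies Lemma~\ref{le:GoingUp} (itself resting on Lemma~\ref{le:CommonPartition}) to equip $P_n$ with a matrix $C$ that $r$ projects onto $B$. You instead bypass the chain and the cycle-decomposition machinery entirely: you realize a homeomorphic copy of the weighted partition $P_B$ subordinate to $P_A$ --- maximality to prescribe the cell measures, ultrahomogeneity to push each group of cells into the correct cell of $P_A$ --- and then transport the matrix $B$ wholesale along the resulting bijection. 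Each step checks out: $\mu_V$ is indeed maximal and ultrahomogeneous (Proposition~\ref{prop:fraissemeasure}), the grouping identity $\mu_V(X_a)=\mu_V(a)$ is exactly the $\mu_V$-morphism condition on $p$, and the transported matrix inherits $V$-valuedness, balancedness and \compatible{\mu_V}ity, with $p\circ r=\pi^{P_C}_{P_A}$ holding by construction. Your route even yields a strictly stronger conclusion, since your $r$ is a bijection, i.e., an isomorphism in $\Mm_V$, so $C$ is literally a relabelled copy of $B$; what the paper's route buys instead is that $P_C$ can be taken to be one of the partitions $P_n$ of the defining chain, which fits its \fra framework but is not required by the statement. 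One small simplification of your argument: the maximality step is redundant. Since $P_B$ is already a clopen partition of $\Can$ realizing the required measures, you may take $x_b = b$; the coarsening $Y_a = \bigcup_{b \in p^{-1}(a)} b$ satisfies $\mu_V(Y_a) = \mu_V(a)$ because $p$ is a $\mu_V$-morphism, so ultrahomogeneity alone produces $\varphi$ with $\varphi[Y_a] = a$, and setting $P_C = \{\varphi[b]\colon b \in P_B\}$ makes the rest of your argument go through verbatim.
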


\begin{proof}
By Corollary \ref{cor:MM-cofinal}, we can assume that $B \in \Cc_V$. Since $((P_n,\mu_n), \pi_n)$ is a \fra sequence, there are $n$ and a $\mu_V$-morphism $r\colon P_n \rightarrow P_B$ such that $p \circ r=\pi^{P_n}_{P_A}$. By Lemma \ref{le:GoingUp}, there is $C \in \Mm_V$ such that $P_C=P_n$ and $r$ projects $C$ on $B$. 
\end{proof}

We say that a chain $(A_n,\pi_n)$ from $\Mm_V$ \emph{determines} $f \in \Homeo(\Can,\mu_V)$ if $\bigcap_n [A_n]=\{f\}$.

\begin{lemma}
\label{le:IsoConjugate}
Let 
$p\colon B\to A$ be an $\Mm_V$ morphism. Then $gfg^{-1} \in [A]$ for any $f \in [B]$ and $g \in [p]$.
\end{lemma}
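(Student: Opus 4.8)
The plan is to verify the defining compatibility condition directly: I want to show that \(\mu_V\bigl((gfg^{-1})[a]\cap a'\bigr)=a_{a,a'}\) for every pair of cells \(a,a'\in P_A\), which is exactly what \(gfg^{-1}\in[A]\) requires. The starting observation is a purely set-theoretic identity for \(g\). Since \(g\in[p]\) sends each cell \(\beta\in P_B\) into the cell \(p(\beta)\in P_A\), and the cells of \(P_A\) are pairwise disjoint, a point \(x\) satisfies \(g(x)\in a\) if and only if the unique cell \(\beta\ni x\) has \(p(\beta)=a\). Hence
\[
g^{-1}[a]=\bigcup_{\beta\in p^{-1}(a)}\beta \qquad(a\in P_A),
\]
and likewise for \(a'\). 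No appeal to the measure is needed here: the identity is forced by the containments \(g[\beta]\subseteq p(\beta)\) together with the disjointness of \(P_A\).

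The second step transports the measure computation along \(g\). Writing \(h=gfg^{-1}\) and using that \(g\) preserves \(\mu_V\), I apply \(g^{-1}\) to \(h[a]\cap a'\), obtaining
\[
\mu_V\bigl(h[a]\cap a'\bigr)=\mu_V\bigl(g^{-1}[h[a]]\cap g^{-1}[a']\bigr)=\mu_V\bigl(f[g^{-1}[a]]\cap g^{-1}[a']\bigr),
\]
where the last equality uses \(g^{-1}[h[a]]=g^{-1}[gfg^{-1}[a]]=f[g^{-1}[a]]\). Substituting the identity from the first step and expanding, and using that \(f\) is a bijection (so the sets \(f[\beta]\) are pairwise disjoint, as are the \(\beta'\)), the measure splits into a double sum
\[
\mu_V\bigl(h[a]\cap a'\bigr)=\sum_{\beta\in p^{-1}(a)}\ \sum_{\beta'\in p^{-1}(a')}\mu_V\bigl(f[\beta]\cap\beta'\bigr)=\sum_{\beta\in p^{-1}(a)}\ \sum_{\beta'\in p^{-1}(a')}b_{\beta,\beta'},
\]
the final equality being the hypothesis \(f\in[B]\). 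The defining property of the balanced morphism \(p\colon B\to A\), namely \(a_{a,a'}=\sum_{\beta\in p^{-1}(a),\,\beta'\in p^{-1}(a')}b_{\beta,\beta'}\), then identifies this double sum with \(a_{a,a'}\), which is what we wanted.

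I expect this to be a bookkeeping computation rather than a proof with a genuine obstacle. The only points that require care are the \emph{exact} (not merely almost-everywhere) set identity \(g^{-1}[a]=\bigcup_{\beta\in p^{-1}(a)}\beta\) in the first step, and the disjointness accounting when the intersection of two unions of cells is expanded into the double sum. Both are elementary once one notes that \(g(x)\) lying in the single cell \(a\) pins down \(p(\beta)\) for the cell \(\beta\) containing \(x\). It is also worth recording at the outset that \(g,f\in\Homeo(\Can,\mu_V)\), so \(gfg^{-1}\) is automatically a \(\mu_V\)-preserving homeomorphism and only the compatibility with \(A\) needs to be checked.
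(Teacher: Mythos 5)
Your proof is correct and is essentially the paper's own argument: both rest on the exact set identity \(g^{-1}[\alpha]=\bigcup_{\beta\in p^{-1}(\{\alpha\})}\beta\) forced by \(g\in[p]\), then combine \(f\in[B]\), the balanced-morphism identity for \(p\), and \(\mu_V\)-invariance of \(g\) in a single chain of equalities (the paper merely runs the chain in the opposite direction, from \(a_{\alpha,\alpha'}\) to \(\mu_V(gfg^{-1}[\alpha]\cap\alpha')\)). No gaps; your explicit attention to the disjointness needed to split the measure into the double sum matches the paper's use of \(P_B\) being a partition.
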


\begin{proof}

Let $A=(a_{\alpha, \alpha'})_{\alpha, \alpha'\in P_A}$ and $B=(b_{\beta,\beta'})_{\beta,\beta'\in P_B}$.
Since $g[\beta] \subseteq p(\beta) \in P_A$ for each \(\beta\in P_B\), we get
\begin{equation}\label{eqn:PN-aux1000}
g^{-1}(\alpha) = \bigcup \{\beta\in P_B : \ \beta \in p^{-1}(\{\alpha\})\} \qquad
(\forall \alpha \in P_B).
\end{equation}
Fix $\alpha,\alpha'\in P_A$.  We have
\begin{multline}\label{eq:aux-DK-0001}
a_{\alpha,\alpha'} = 
\sum_{\substack{\beta \in p^{-1}(\{\alpha\})\\\beta' \in p^{-1}(\{\alpha'\})}}
b_{\beta,\beta'} = 
\sum_{\substack{\beta \in p^{-1}(\{\alpha\})\\\beta' \in p^{-1}(\{\alpha'\})}}
\mu_V(f[\beta] \cap \beta')\\
= \mu_V\Bigl(f\bigl[\bigcup \{\beta\in P_B:  p(\beta) = \alpha\}\bigr] \cap
\bigl(\bigcup \{\beta'\in P_B: \ p(\beta') = \alpha'\}\bigr)\Bigr)\\
= \mu_V(f[g^{-1}[\alpha]] \cap g^{-1}[\alpha']) = \mu_V(gfg^{-1}[\alpha] \cap
\alpha'),
\end{multline}
where the first equality in \eqref{eq:aux-DK-0001} follows from the fact that $p$ is
an \(\Mm_V\)-morphism, the second follows from \(f \in [B]\), 
 third uses that $P_B$ is a partition, 
 we get the fourth equality from \eqref{eqn:PN-aux1000},  
 and the last equality follows from the invariance of \(\mu_V\) under \(g\). Clearly, \eqref{eq:aux-DK-0001} means that \(gfg^{-1} \in [A]\).
\end{proof}

\begin{corollary}\label{cor:MM-compatible-nonempty}
For every $A \in \Mm_V$ we have $[A] \neq \emptyset$.
\end{corollary}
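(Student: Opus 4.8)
The plan is to reduce the statement to the cofinal subcategory $\Cc_V$ of cyclic matrices, and there to invoke ultrahomogeneity of $\mu_V$ directly. First I would observe that it suffices to prove $[A]\neq\emptyset$ for $A\in\Cc_V$. Indeed, by Corollary~\ref{cor:MM-cofinal} the subcategory $\Cc_V$ is cofinal in $\Mm_V$, so an arbitrary $A\in\Mm_V$ admits an $\Mm_V$-morphism $p\colon B\to A$ with $B\in\Cc_V$. Since $p$ is in particular a $\mu_V$-morphism, we have $[p]\neq\emptyset$ (every morphism extends to an automorphism of the \fra limit). Lemma~\ref{le:IsoConjugate} then yields $gfg^{-1}\in[A]$ for any $f\in[B]$ and $g\in[p]$, so that $[B]\neq\emptyset$ forces $[A]\neq\emptyset$.

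It remains to treat $A=(a_{p,p'})_{p,p'\in P}\in\Cc_V$. Here $G(A)$ is a disjoint union of directed cycles, so each cell $p\in P$ has exactly one outgoing edge, pointing to a unique successor $\sigma(p)\in P$, and $\sigma$ is a permutation of $P$. The single nonzero entry of the $p$-th row equals the row sum $\mu_V(p)$ by the compatibility condition \eqref{eqn:A-compatible+}, that is, $a_{p,\sigma(p)}=\mu_V(p)$; and since along each cycle the matrix restricts to a cycle matrix, all of its weights coincide. Hence $\mu_V$ is constant on the cells of each cycle, and in particular $\mu_V(p)=\mu_V(\sigma(p))$ for every $p\in P$. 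Thus $\sigma\colon P\to P$ is a partial $\mu_V$-isomorphism.

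By ultrahomogeneity of $\mu_V$ (it is good, hence ultrahomogeneous by Theorem~\ref{thm:fraisseisgood}; alternatively this is immediate from Proposition~\ref{prop:fraissemeasure}) there is $\varphi\in\Homeo(\Can,\mu_V)$ with $\varphi[p]=\sigma(p)$ for every $p\in P$. To check $\varphi\in[A]$ I would verify compatibility pair by pair: $\mu_V(\varphi[p]\cap p')=\mu_V(\sigma(p)\cap p')$ equals $\mu_V(\sigma(p))=\mu_V(p)=a_{p,\sigma(p)}$ when $p'=\sigma(p)$, and equals $0=a_{p,p'}$ otherwise, since distinct cells of the partition $P$ are disjoint. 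Hence $\varphi$ is compatible with $A$ and $[A]\neq\emptyset$.

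The argument is essentially a bookkeeping assembly of the preceding lemmas, so I do not anticipate a serious obstacle. The only point requiring a genuine observation is the cyclic case: one must notice that the equi-summed condition propagates a single common value of $\mu_V$ around each cycle, which is precisely what turns the successor permutation $\sigma$ into a bona fide partial $\mu_V$-isomorphism to which ultrahomogeneity can be applied.
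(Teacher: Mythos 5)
Your proof is correct and takes essentially the same route as the paper's: reduce to the cofinal subcategory $\Cc_V$ via Corollary~\ref{cor:MM-cofinal} together with Lemma~\ref{le:IsoConjugate} (using that $[p]\neq\emptyset$ for every $\mu_V$-morphism), then apply ultrahomogeneity of $\mu_V$ to the permutation determined by a cyclic matrix. You merely spell out two details the paper leaves implicit, namely that the equi-summed condition forces $\mu_V$ to be constant along each cycle (so the successor map is a partial $\mu_V$-isomorphism) and the entrywise check that the resulting homeomorphism is compatible with the matrix.
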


\begin{proof}
By Corollary \ref{cor:MM-cofinal}, there is $C \in \Cc_V$ and an $\Mm_V$-morphism $p$ that projects $C$ onto $A$. Moreover, since $C\in \Cc_V$, we see that $C$ determines a permutation of $P_C$, hence it determines a partial $\mu_V$-isomorphism $f_C\colon P_C \to P_C$. By ultrahomogeneity of $\mu_V$, there is $\varphi \in \Homeo(\Can,\mu_V)$ such that $\varphi[x]=f(x)$ for $x \in P_C$, i.e., $\varphi \in [C]$. By Lemma \ref{le:IsoConjugate}, $[A] \neq \emptyset$.
\end{proof}
Now we have all the ingredients to connect directedness, resp. weak amalgamation of $\Mm_V$ with the Rokhlin property, resp. strong Rokhlin property. We remark that such characterizations are essentially known to experts and were explicitly stated in different contexts in \cite{KeRo07} and \cite{Iv99}. However, since there is no direct reference applicable in our context we provide full proofs below.
\begin{theorem}
	\label{th:Dense} Assume that $V$ is group-like. 
	The following are equivalent:
	
	\begin{enumerate}[\upshape(1)] 
        \item \label{c:i} $\Mm_V$ is directed,
		\item \label{c:ii} $\Homeo(\Can,\mu_V)$ has the Rokhlin property.
	\end{enumerate}
\end{theorem}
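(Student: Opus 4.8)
The plan is to fix the basis $\{[A] : A \in \Mm_V\}$ for the topology of $G = \Homeo(\Can,\mu_V)$ supplied by Proposition~\ref{pr:Basis}, and to recast the Rokhlin property as topological transitivity of the conjugation action of $G$ on itself. Since $G$ is Polish (hence a second countable Baire space) and conjugation acts by homeomorphisms of $G$, the Birkhoff transitivity theorem tells us that a dense conjugacy class (a dense orbit of this action) exists if and only if for every pair of nonempty open sets $U, W \subseteq G$ there is $g \in G$ with $gUg^{-1} \cap W \neq \emptyset$. Because the sets $[A]$ form a basis and each is nonempty by Corollary~\ref{cor:MM-compatible-nonempty}, it suffices to test this criterion on $U = [A_1]$ and $W = [A_0]$ for arbitrary $A_0, A_1 \in \Mm_V$. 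Both implications then reduce to relating directedness of $\Mm_V$ with the possibility of conjugating an element of $[A_1]$ into $[A_0]$.

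For \eqref{c:i}$\Rightarrow$\eqref{c:ii} I would argue as follows. Given $A_0, A_1$, directedness yields $B \in \Mm_V$ together with $\Mm_V$-morphisms $r_0\colon B \to A_0$ and $r_1\colon B \to A_1$. Choose $f_B \in [B]$ (nonempty by Corollary~\ref{cor:MM-compatible-nonempty}) and $g \in [r_0]$, $h \in [r_1]$ (nonempty, since every morphism extends to an automorphism). By Lemma~\ref{le:IsoConjugate} we get $g f_B g^{-1} \in [A_0]$ and $f_1 := h f_B h^{-1} \in [A_1]$. The key move is then to conjugate $f_1$ back: with $g' := g h^{-1}$ one has $g' f_1 (g')^{-1} = g f_B g^{-1} \in [A_0]$, so $g'[A_1](g')^{-1} \cap [A_0] \neq \emptyset$. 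This is exactly the transitivity criterion, and the Rokhlin property follows.

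For \eqref{c:ii}$\Rightarrow$\eqref{c:i} I would take a homeomorphism $f$ with dense conjugacy class and fix $A_0, A_1 \in \Mm_V$. Density provides $g_0, g_1$ with $g_i f g_i^{-1} \in [A_i]$. I would then choose a clopen partition $R$ refining both pullback partitions $g_i^{-1}(P_{A_i})$, and let $B = (b_{r,r'})_{r,r' \in R}$ be the matrix of $f$ relative to $R$, that is, $b_{r,r'} = \mu_V(f[r] \cap r')$; since $f[r] \cap r'$ is clopen its measure lies in $V$, and $\sum_{r'} b_{r,r'} = \mu_V(f[r]) = \mu_V(r)$, so $B \in \Mm_V$ and $f \in [B]$. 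Defining $q_i\colon R \to P_{A_i}$ by sending $r$ to the unique cell of $P_{A_i}$ containing $g_i[r]$ (well defined as $R$ refines $g_i^{-1}(P_{A_i})$), I would verify that each $q_i$ is an $\Mm_V$-morphism $B \to A_i$, exhibiting $B$ as a common lift and proving directedness.

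I expect the main obstacle to lie in this last verification, namely checking that the $q_i$ are genuine $\Mm_V$-morphisms rather than merely $\mu_V$-morphisms. Concretely, one must establish the balanced-morphism identity $(A_i)_{\alpha,\alpha'} = \sum_{r \in q_i^{-1}(\alpha),\, r' \in q_i^{-1}(\alpha')} b_{r,r'}$, which amounts to rewriting $\mu_V(g_i f g_i^{-1}[\alpha] \cap \alpha')$ as $\mu_V(f[g_i^{-1}[\alpha]] \cap g_i^{-1}[\alpha'])$ using the $g_i$-invariance of $\mu_V$; this is precisely the computation in Lemma~\ref{le:IsoConjugate} read backwards. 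The remaining points (that $R$ may be chosen to refine both pullback partitions at once, and that the entries $b_{r,r'}$ are clopen values of $\mu_V$ and so lie in $V$) are routine and guarantee that $B$ is indeed an object of $\Mm_V$.
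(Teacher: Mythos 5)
Your proof is correct, and it rests on the same two pillars as the paper's: Proposition~\ref{pr:Basis} together with Corollary~\ref{cor:MM-compatible-nonempty} to reduce everything to the basic clopen sets \([A]\), and the conjugation computation of Lemma~\ref{le:IsoConjugate} (used forwards in one direction and backwards in the other). The packaging differs in two places. For \eqref{c:i}\(\Rightarrow\)\eqref{c:ii} you quote the Birkhoff-type equivalence between topological transitivity of the conjugation action and the existence of a dense orbit, whereas the paper writes out the Baire-category argument directly, showing that the set of \(f \in \Homeo(\Can,\mu_V)\) with dense conjugacy class is comeager; both routes use the identical conjugation trick (your \(g' = gh^{-1}\)), and the paper's phrasing yields the comeagerness statement explicitly while yours recovers it through the cited criterion. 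For \eqref{c:ii}\(\Rightarrow\)\eqref{c:i} the paper is extremely terse --- it fixes a chain \((B_i)\) determining \(f\) and asserts that \(\{B_i\}\) witnesses directedness --- and your argument is precisely the verification this one-liner suppresses: given \(A_0, A_1\), you conjugate \(f\) into \([A_i]\), form the matrix \(B\) of \(f\) on a common refinement \(R\) of the pullback partitions \(g_i^{-1}(P_{A_i})\), check \(B \in \Mm_V\) (entries are clopen values, row and column sums both equal \(\mu_V(r)\) by invariance of \(\mu_V\) under \(f\)), and verify that \(q_i\colon r \mapsto\) the cell of \(P_{A_i}\) containing \(g_i[r]\) is a balanced \(\mu_V\)-morphism by running the computation of Lemma~\ref{le:IsoConjugate} in reverse from \(g_i f g_i^{-1} \in [A_i]\). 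This direct construction of a common lift even bypasses the chain machinery, so your write-up is, if anything, more self-contained than the paper's for that implication; there is no gap in either direction.
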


\begin{proof}
\eqref{c:i}$\Rightarrow$\eqref{c:ii} 
Fix $A, B \in \Mm_V$. 
Since $\Mm_V$ is directed, we find $C \in \Mm_V$ such that there are $\Mm_V$-morphisms $p\colon C \to A$ and $q\colon C \to B$. By Lemma \ref{le:IsoConjugate}, there is an open set $U\subseteq [B]$ such that for every $f \in U$ we can pick  $\varphi \in \Homeo(\Can,\mu_V)$ with $\varphi f\varphi^{-1} \in [A]$. As $B$ was arbitrary, it follows that the set of all $f \in \Homeo(\Can,\mu_V)$ such that there is $\varphi \in \Homeo(\Can,\mu_V)$ with $\varphi f \varphi^{-1} \in [A]$ is open and dense in $\Homeo(\Can,\mu_V)$. As $A$ was arbitrary, the set of $f \in \Homeo(\Can,\mu_V)$ with a dense conjugacy class is comeager; in particular, it is not empty. 
	
\eqref{c:ii}$\Rightarrow$\eqref{c:i} Fix $f \in \Homeo(\Can,\mu_V)$ with a dense conjugacy class and a chain $(B_i)$ that determines $f$. Then $\{B_i\}$ witnesses that $\Mm_V$ is directed.
\end{proof}

\begin{theorem}
\label{th:CAMImpliesNonMeager}
If $V$ is group-like and $\Mm_V$ has weak amalgamation, then there is a non-meager conjugacy class in $\Homeo(\Can,\mu_V)$.
\end{theorem}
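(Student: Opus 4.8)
The plan is to produce a nonempty open set $U \subseteq \Homeo(\Can,\mu_V)$ together with one conjugacy class $C$ such that $C$ contains a dense $G_\delta$ subset of $U$; this already forces $C$ to be non-meager, so no appeal to the Baire property of conjugacy classes is needed. Using weak amalgamation I would fix any $A_0 \in \Mm_V$ and an amalgamation basis $A'$ for it, with projection $p \colon A' \to A_0$, and set $U = [A']$, which is nonempty by Corollary~\ref{cor:MM-compatible-nonempty}. By Proposition~\ref{pr:Basis} every $f \in [A']$ is determined by a $\pi$-chain $(A_n)_{n}$ with $A_0 = A'$, and throughout I would think of elements of $[A']$ through such chains. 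The two tasks are then to isolate a comeager family $\mathcal G \subseteq [A']$ of \emph{generic} elements, and to show by a back-and-forth argument that all members of $\mathcal G$ are conjugate.

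For genericity, I would call $f \in [A']$ \emph{generic} if cofinally many of its approximations $A_n$ arise as amalgamation bases (for earlier approximations), and if it realizes every admissible extension: for each $n$ and each $\Mm_V$-morphism $e \colon E \to A_n$ there are $m \ge n$ and an $\Mm_V$-morphism $t \colon A_m \to E$ with $e \circ t = \pi^{P_{A_m}}_{P_{A_n}}$. These constitute a countable list of requirements, and each requirement defines an open subset of $[A']$ (a union of basic sets $[A_m]$). The crucial point is density, and this is exactly where weak amalgamation enters at the level of the approximations: given a basic set $[A_\ell] \subseteq [A']$ I would first refine it, using cofinality in amalgamation bases, to a basic set whose top is an amalgamation basis $B$, then amalgamate the two extensions $B \to A_n$ and $e \colon E \to A_n$ one level down, and finally straighten the resulting morphism into $f$'s chain by Lemma~\ref{le:ReverseProjection} (together with Lemma~\ref{le:GoingUp} to keep everything inside $\Mm_V$). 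This yields a $\pi$-lift realizing $e$ below $[A_\ell]$, so each requirement is dense open and $\mathcal G$ is a dense $G_\delta$ in $[A']$, in particular nonempty.

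For conjugacy, given generic $f, g \in [A']$ with chains $(A_n)$ and $(B_n)$, I would build $h$ with $hfh^{-1} = g$ as the limit of a commuting ladder of $\Mm_V$-morphisms $q_m \colon A_{k_m} \to B_{l_m}$, processing the approximations of $g$ and of $f$ alternately. At a step absorbing the next approximation $B_{l+1} \to B_l$ of $g$, I would choose $l$ so that $B_l$ is an amalgamation basis, amalgamate $q_m \colon A_{k_m} \to B_l$ with $B_{l+1} \to B_l$ over the previous level $B_{l-1}$, and realize the resulting extension on the $f$-side by genericity (again via Lemma~\ref{le:ReverseProjection}); the forth-steps are symmetric. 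Each constraint $h \in [q_m]$ then conjugates the corresponding approximation of $f$ onto that of $g$, so that Lemma~\ref{le:IsoConjugate} gives $h f h^{-1} \in [B_{l_m}]$ for all $m$, whence $hfh^{-1} = g$ in the limit. Consequently the conjugacy class of any $f \in \mathcal G$ contains $\mathcal G$, is comeager in $[A']$, and is therefore non-meager.

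The main obstacle I anticipate is the bookkeeping in the back-and-forth. Weak amalgamation only guarantees that two extensions of an amalgamation basis agree \emph{one level down} (over the object for which it is a basis, via $p$), rather than over the basis itself; consequently each rung of the ladder commutes only after projecting one step down, and one must arrange the two alternating enumerations so that these ``previous levels'' on both sides are exhausted. Only in the limit, after every fixed level has been overtaken, does the ladder commute on the nose and pin down a genuine homeomorphism $h$ with $hfh^{-1} = g$. Checking that the cofinality-in-amalgamation-bases clause can be met along $f$'s and $g$'s chains, and that it is compatible with the density argument for $\mathcal G$, is the delicate part; the conjugation covariance of Lemma~\ref{le:IsoConjugate} and the straightening provided by Lemma~\ref{le:ReverseProjection} are the two technical tools that make each step go through.
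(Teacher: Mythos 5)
Your plan is correct in outline, but it follows a genuinely different route from the paper. The paper never constructs generic elements: it verifies Rosendal's criterion \cite[Proposition 3.2]{BeMeTs}. Concretely, for a given neighbourhood \(U_0\) of the identity it takes an identity matrix \(A\) with \([A]\) small, picks a weak amalgamation basis \(B\) for \(A\) arranged (via Lemma~\ref{le:ReverseProjection}) so that \(\pi^{P_B}_{P_A}\) is an \(\Mm_V\)-morphism and \([B] \subseteq [A]\), and then, for any two \(\pi\)-lifts \(C,D\) of \(B\), a single application of weak amalgamation followed by Lemma~\ref{le:ReverseProjection} produces \(F\) with \([p\circ r], [q\circ r] \subseteq [A]\); a density argument as in Theorem~\ref{th:Dense} then yields \(f \in [B]\) whose conjugates \emph{by elements of \([A]\)} are dense in \([B]\), and the criterion does the rest. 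Your route instead proves the stronger local statement directly --- a single class comeager in the open set \([A']\) --- by the Ivanov/Kechris--Rosendal-style construction: a countable family of dense open absorption requirements cutting out a dense \(G_\delta\) set of generics, followed by a weak back-and-forth showing all generics are conjugate. What this buys is self-containedness (no external criterion) and an explicit description of the non-meager class; what it costs is precisely the ladder bookkeeping you describe, which Rosendal's criterion packages away --- note in particular that the criterion only asks for local topological transitivity of conjugation by elements of a prescribed neighbourhood of the identity, whereas your back-and-forth must build an unrestricted conjugator \(h\) as a limit of \(\mu_V\)-morphisms \(q_m\), including the checks that \(\bigcap_m [q_m]\) is a singleton (meshes of both partition sequences must shrink to zero) and that the chain \((B_{l_m})\) determines \(g\). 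One amendment is needed for your argument to close: the genericity clause as you state it, \(e \circ t = \pi^{P_{A_m}}_{P_{A_n}}\) on the nose, is in general unattainable under mere weak amalgamation --- your own density argument only delivers equality after composing down to the level over which \(A_n\) is an amalgamation basis, i.e.\ \(p \circ e \circ t = p \circ \pi^{P_{A_m}}_{P_{A_n}}\). You flag this in your final paragraph, but the formal definition of \(\mathcal G\) (and the indexing of the requirements, which should run over basis levels only, these being cofinal since by hypothesis every object admits a basis) must be weakened accordingly, exactly as in weak Fra\"{i}ss\'{e} theory; with that repair the proposal goes through.
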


\begin{proof}
Fix a neighbourhood $U_0$ of 
$\text{id}_{\Can}\in  \Homeo(\Can,\mu_V)$. Let
$U\subset U_0$ be such that $\text{id}_{\Can}\in U$, $U=U^{-1}$ and $UU\subseteq U_0$. Let $A$ be the identity matrix with the indexing partition $P_A\in \Pp$ that is sufficiently fine to give $[A]\subseteq U$. We will verify that Rosendal's criterion \cite[Proposition 3.2]{BeMeTs} holds. 

Fix $B \in \Mm^A_V$ that is a weak amalgamation basis for $A$. By Lemma \ref{le:ReverseProjection}, we can assume that $\pi^{P_B}_{P_A}$ is an $\Mm_V$-morphism witnessing it. In particular, $[B] \subseteq [A]$.

By Proposition \ref{pr:Basis}, $\{[B']: B' \mbox{ is a } \pi\mbox{-lift of } B  \}$ is a basis for $[B]$. Fix $\pi$-lifts $C,D$ of $B$. Use weak amalgamation to find $E \in \Mm^B_V$  with morphisms $p\colon E \to C$ and $q\colon E \to D$ such that 
$$\pi^{P_C}_{P_A} \circ p=\pi^{P_B}_{P_A}\circ  \pi^{P_C}_{P_B} \circ p= \pi^{P_B}_{P_A}\circ  \pi^{P_D}_{P_B} \circ q =\pi^{P_D}_{P_A} \circ q.$$
By Lemma \ref{le:ReverseProjection}, there is $F \in \Mm_V$ and am $\Mm_V$-morphism $r\colon F \to E$ such that
$$\pi^{P_D}_{P_A} \circ q \circ r=\pi^{P_F}_{P_A}.$$
%
But this means that if $g \in [q \circ r]$, then $g \in [A]$. Similarly,  $\pi^{P_C}_{P_A} \circ p= \pi^{P_D}_{P_A} \circ q$, gives $[p \circ r]\subseteq [A]$. Now, exactly as in the proof of Theorem \ref{th:Dense}, we argue that there is $f \in [B]$ such that $\{gfg^{-1}: g \in [A]\}$ is dense in $[B]$. As $U_0$ was arbitrary, by Rosendal's criterion \cite[Proposition 3.2]{BeMeTs}, we get that there is a non-meager conjugacy class in $\Homeo(\Can,\mu_V)$.
\end{proof}

\begin{corollary}
\label{co:Directed&CAMImpliesNonMeager}
If $\Mm_V$ is directed and has weak amalgamation, then $\Homeo(\Can,\mu_V)$
has the strong Rokhlin property.
\end{corollary}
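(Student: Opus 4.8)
The plan is to combine the two preceding theorems by a standard topological–dynamical argument applied to the conjugation action of $G := \Homeo(\Can,\mu_V)$ on itself, whose orbits are exactly the conjugacy classes. First I would record the two inputs. Since $\Mm_V$ is directed, Theorem~\ref{th:Dense} gives a dense conjugacy class in $G$; equivalently, the conjugation action $G\curvearrowright G$, $(g,x)\mapsto gxg^{-1}$, has a dense orbit and is therefore topologically transitive (for every pair of nonempty open $U,W\subseteq G$ there is $g\in G$ with $gUg^{-1}\cap W\neq\varnothing$; this follows from density of a single orbit with no appeal to category). Independently, since $\Mm_V$ has weak amalgamation, Theorem~\ref{th:CAMImpliesNonMeager} supplies a non-meager conjugacy class $C$.

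The second step is to upgrade $C$ from non-meager to comeager. The class $C$ is invariant under the conjugation action, and it is analytic, being the image of the continuous map $g\mapsto gfg^{-1}$ for any fixed $f\in C$; hence $C$ has the Baire property. Now I would invoke the topological zero–one law: for a set with the Baire property that is invariant under a topologically transitive continuous action, being non-meager forces being comeager. Concretely, such a $C$ is comeager in some nonempty open $U$; given an arbitrary nonempty open $W$, transitivity provides $g$ with $gUg^{-1}\cap W\neq\varnothing$, and since conjugation by $g$ is a homeomorphism fixing $C$ setwise, $C$ is comeager on this nonempty open subset of $W$. Thus $C$ is non-meager in every nonempty open set, which for a Baire-property set means $C$ is comeager.

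Finally I would pass from comeager to dense $\ggG_\delta$. A comeager set is dense, so $\overline{C}=G$ and $C$ is a non-meager orbit in its own closure; by Effros' theorem on orbits of continuous Polish group actions, such an orbit is $\ggG_\delta$. Hence $C$ is a dense $\ggG_\delta$ conjugacy class, which is exactly the strong Rokhlin property for $G$, completing the proof. (As a byproduct, the zero–one law shows that a comeager conjugacy class is the unique dense one, so $C$ coincides with the class produced by Theorem~\ref{th:Dense}.)

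The only genuinely non-formal ingredients are the topological zero–one law and the appeal to Effros' theorem; everything else is bookkeeping with the two cited theorems. The main point to be careful about is precisely the matching of classes: Theorem~\ref{th:CAMImpliesNonMeager} and Theorem~\ref{th:Dense} a priori produce possibly different conjugacy classes, and it is the zero–one law (forcing the non-meager class to be comeager, hence dense) that identifies them and yields a single class that is simultaneously dense, comeager, and $\ggG_\delta$.
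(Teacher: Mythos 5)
Your argument is correct and coincides with the paper's (implicit) proof: the corollary is stated there without further argument precisely because it is the combination of Theorem~\ref{th:Dense} and Theorem~\ref{th:CAMImpliesNonMeager} with the standard facts you supply --- the topological zero--one law applied to the topologically transitive conjugation action (upgrading the non-meager class to comeager, hence dense), and Effros' theorem (a non-meager orbit of a continuous Polish group action is $\ggG_\delta$), yielding a dense $\ggG_\delta$ class, i.e.\ the strong Rokhlin property. One small caveat on your final parenthetical, which the main argument fortunately never uses: the zero--one law gives uniqueness of the \emph{comeager} (indeed of the non-meager) conjugacy class, but a comeager class need not be the unique \emph{dense} one, since meager dense conjugacy classes can coexist with it (e.g.\ in $S_\infty$, the class of a permutation with infinitely many cycles of each finite length together with some infinite orbits is dense but not generic).
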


\section{Amalgamation in $\Mm_V$}\label{sec:amalgam-2}
Let $V$ be a group-like set and take $v\in V$. Let $\CC^v_V$ be the set of equi-summed matrices $A$ with entries in $V$ summing up to $v$ and such that there is exactly one non-zero entry in each row of $A$. Each matrix in $\Cc^v_V$ can be uniquely  represented as a tuple $\vec u =((v_1,n_1),\ldots,(v_m,n_m))$, where $v_i\in V$ and $n_i\in\NN$ for $1\leq i\leq m$ satisfy $\sum_{i=1}^m n_i v_i=v$. Up to an indexing partition of $\Can$, we can identify $\Cc_V$ with $\Cc_V^1$.


Let $v,w\in V$ be such that $v+w\leq 1$. If $\vec c=((v_1,n_1),\ldots,(v_m,n_m))\in\CC_V^v$ and $\vec d=((w_1,k_1),\ldots,(w_l,k_l))\in\CC_V^w$ are two tuples, then 
their \emph{sum} $\vec c+\vec d$ in an element of $\CC_V^{v+w}$ which is simply the concatenation of $\vec c$ and $\vec d$. Similarly, if $n\in\NN$ is such that $nv\in V$ and $\vec c=((v_1,n_1),\ldots,(v_m,n_m))\in\CC_V^v$, then $n\cdot \vec c=\vec c+\ldots+\vec c$ ($n$ times), hence $n\cdot \vec c\in \CC_V^{nv}$. If $v=w$, then by a \emph{morphism} from $\vec c$ 
 onto $\vec d$ 
we understand a partition of $\{1,\ldots,m\}$ into disjoint subsets $A_1,\ldots,A_l$ such that for every $1\leq j\leq l$ we have $\sum_{i\in A_j} n_i v_i=k_j w_j$, and $\frac{n_i}{k_j}\in\NN$ for every $i\in A_j$.

Finally, if $\varphi\colon \vec c_0\to \vec c_1$ and $\psi\colon \vec d_0\to \vec d_1$ are morphisms where $\vec c_0, \vec c_1\in \CC_V^v$ and $\vec d_0,\vec d_1\in \CC_V^w$, then $\phi+\psi$ is the obvious morphism between $\vec c_0+ \vec d_0$ and $\vec c_1+\vec d_1$. 

\begin{proposition}\label{prop:cofinalcharacterization}
Let $V$ be a group-like set. Then $\CC_V$ has cofinal amalgamation if and only if for every $v\in V$ there are $v_1,\ldots,v_n\in V$ such that $v=v_1+\ldots +v_n$ and for every $1\leq i\leq n$, the set $\CC_V^{v_i}$ is directed.
\end{proposition}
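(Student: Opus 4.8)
The plan is to exploit the explicit ``concatenation of cycles'' description of $\CC_V$ together with the following local-to-global principle. Every object of $\CC_V$ is the sum of its cycles, each cycle is an element $((a,p))$ (weight $a\in V$, size $p$) of $\CC_V^{pa}$, and---crucially---a morphism \emph{onto a single cycle} $c=((a,p))$ is \emph{unique} whenever it exists, since it corresponds to the only partition of the domain cycles into one block subject to the size-divisibility and sum conditions. Consequently a morphism $r\colon B\to A$ splits canonically as a concatenation $B=\sum_{c}B^{(c)}$ of morphisms $B^{(c)}\to c$ over the cycles $c$ of $A$, and a square over $A$ commutes as soon as it commutes over each cycle---which, by uniqueness, is automatic. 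Thus every amalgamation problem in a subcategory of $\CC_V$ reduces to amalgamation over one cycle at a time, and amalgamating cyclewise and re-concatenating solves the original problem. The second ingredient is that the lifts of a fixed cycle $c=((a,p))$ (the objects all of whose sizes are divisible by $p$, with the induced morphisms) form a category isomorphic to $\CC_V^{a}$, via dividing all sizes by $p$. In particular, amalgamation over $c$ is equivalent to directedness of $\CC_V^{a}$, where $a$ is the \emph{weight} of $c$.

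For $(\Leftarrow)$ I would let $\Kk\subseteq\CC_V$ be the full subcategory of those objects all of whose cycles have a \emph{directed weight} (every cycle $((a,p))$ satisfies ``$\CC_V^{a}$ is directed''). To see that $\Kk$ is cofinal it suffices to lift a single cycle $c=((a,p))$: using the hypothesis, decompose the \emph{weight} $a=v_1+\dots+v_n$ with each $\CC_V^{v_j}$ directed, and take the lift $\sum_j((v_j,p))\to c$, which keeps the size $p$ (so it is a genuine morphism onto $c$) while all its cycles now have directed weights. For amalgamation inside $\Kk$, I split the problem over the cycles $c_s=((a_s,p_s))$ of the base; over each $c_s$ the two lifts live in the category of lifts of $c_s$ (isomorphic to $\CC_V^{a_s}$), which is directed because $A\in\Kk$, so they have a common lift there; I then refine its weights once more, exactly as in the cofinality step, to push it back into $\Kk$ without changing sizes, and re-concatenate. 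Uniqueness of the morphisms onto each $c_s$ makes the resulting square commute.

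For $(\Rightarrow)$, assume $\Kk\subseteq\CC_V$ is cofinal with amalgamation and fix $v\in V$. Applying cofinality to $((v,1),(1-v,1))\in\CC_V$ yields $A'\in\Kk$ with a morphism onto it; the cycles $c$ of $A'$ lying over the first block have weights $a_c\in V$ and sizes $p_c$ with $\sum_c p_c a_c=v$, so rewriting each $p_c a_c$ as $a_c+\dots+a_c$ ($p_c$ summands) already exhibits $v$ as a sum of the elements $a_c$. It then remains to prove that each $\CC_V^{a_c}$ is directed. Given $S,T\in\CC_V^{a_c}$, I transport them through the isomorphism to lifts $\hat S,\hat T$ of the cycle $c$, realise these as lifts of $A'$ agreeing with $A'$ off $c$, push the two lifts into $\Kk$ by cofinality, amalgamate over $A'$, and project the amalgam back to the $c$-coordinate; by the splitting principle this projection is a common lift of $\hat S$ and $\hat T$, hence a common lift of $S$ and $T$ through the isomorphism.

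The step I expect to be the crux is choosing the cofinal subcategory correctly, namely by requiring the cycle \emph{weights} (rather than the cycle totals $pa$) to be directed, equivalently exploiting the isomorphism between the lifts of $((a,p))$ and $\CC_V^{a}$. This is what makes cofinality cheap: one refines each weight $a$ via the hypothesis while \emph{preserving the size} $p$, so the refined cycles $((v_j,p))$ are automatically lifts of the original cycle. A naive attempt phrased in terms of the totals $pa$ would instead demand that directedness be preserved under multiplication by $p$ (that $\CC_V^{pa}$ be directed whenever $\CC_V^{a}$ is), a property that need not hold. Recognising that one should work with weights, and that Lemma~\ref{le:CommonPartition} supplies the common refinements of the competing weight-decompositions needed to build the amalgams, is the heart of the argument.
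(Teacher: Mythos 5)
Your proposal is correct and takes essentially the same route as the paper's proof: both reduce every cofinality/amalgamation problem cyclewise over the base, identify the lifts of a single cycle $((a,p))$ with $\CC_V^{a}$ by scaling all sizes by $p$ (the paper does this implicitly when it divides the multiplicities $m_s$ by $n_i$ and assumes ``without loss of generality $n_i=1$''), and take as the cofinal subcategory the objects all of whose cycle \emph{weights} $w$ have $\CC_V^{w}$ directed, refining weights via the hypothesis and Lemma~\ref{le:CommonPartition}-style decompositions while preserving sizes. If anything, your version is slightly more careful than the paper's: your explicit winding isomorphism and the observation that a morphism onto a single cycle is unique (so the required squares commute automatically) make rigorous a point the paper glosses over, namely that $n_i\vec t_i$ must be understood as multiplying sizes by $n_i$ rather than as the literal $n_i$-fold concatenation $\vec t_i+\ldots+\vec t_i$, since only the former satisfies the divisibility condition $\frac{n_i}{k_j}\in\NN$ needed for it to map onto the cycle $(w_i,n_i)$.
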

\begin{proof}
$\Rightarrow$ Fix $v\in V$. Since $\CC_V$ has cofinal amalgamation there exists a tuple $((w_1,n_1),\ldots,(w_k,n_k))\in \CC_V^v$ witnessing the cofinal amalgamation property. We claim that $\CC_V^{w_i}$ is directed for $1\leq i\leq k$. Then we are done, since $v=\sum_{i=1}^k n_i w_i$. Let $\vec t_i,\vec u_i\in\CC_V^{w_i}$ be arbitrary. We need to find $\vec s_i\in\CC_V^{w_i}$ that gets mapped on $\vec t_i$ and $\vec u_i$. Notice that the tuples $(w_1,n_1)+\ldots+n_i\vec t_i+\ldots+(w_k,n_k)$ and $(w_1,n_1)+\ldots+n_i\vec u_i+\ldots+(w_k,n_k)$ are mapped onto $((w_1,n_1),\ldots,(w_k,n_k))$. By assumption, there is their amalgmam $((z_1,m_1),\ldots,(z_l,m_l))$. Without loss of generality, we may assume there is $1\le j\leq l$ such that $((z_1,m_1),\ldots,(z_j,m_j))$ gets mapped onto $n_i\vec t_i$ and $n_i\vec u_i$. Then clearly $((z_1,m_1/n_i),\ldots,(z_j,m_j/n_i))$ maps onto $\vec t_i$ and $\vec u_i$,  which proves directedness. 

$\Leftarrow$ Fix $((v_1,n_1),\ldots,(v_k,n_k))\in\CC_V$. It is enough to prove that $(v_i,n_i)$ witnesses cofinal amalgamation in $\CC_V^{n_iv_i}$ for $1\leq i\leq k$, since the sum of amalgams for $i=1,\ldots,k$ is an amalgam of the sum. Fix $1\leq i\leq k$. Without loss of generality, $n_i=1$. By the assumption there are $w_1,\ldots,w_n\in V$ such that $v_i=w_1+\ldots+w_n$ and $\CC_V^{w_j}$ is directed for each $1\leq j\leq n$. 
Repeating the arguments from the other implication, it is straightforward to see  that the tuple $((w_1,1),\ldots,(w_n,1))$ gets mapped onto $(v_i,n_i)$, so it  witnesses the cofinal amalgamation.
\end{proof}
Ibarlucía and Melleray characterized good measures $\mu$ for which $\Homeo(\Can,\mu)$ contains a dense conjugacy class (has the Rokhlin property), see \cite[Proposition 6.6]{IM16}. For the reader's convenience, we provide our concise proof. 
\begin{lemma}\label{lem:ringlike-densityproperty}
If $\mu$ is a good measure with ring-like 
$V_\mu$, then
$\Homeo(\Can,\mu)$ has the Rokhlin property.
\end{lemma}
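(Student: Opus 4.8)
The plan is to reduce everything to the combinatorial criterion of Theorem~\ref{th:Dense}. First, by the uniqueness (up to topological conjugacy) of good measures with a prescribed clopen values set, the measure $\mu$ is conjugate to the measure $\mu_V$ attached in Section~\ref{sec:amalgam-1} to a \fra chain of $\FF_V$, where $V = V_\mu$. A conjugacy $\varphi$ with $\varphi^*(\mu)=\mu_V$ induces a topological group isomorphism $h\mapsto \varphi h\varphi^{-1}$ between $\Homeo(\Can,\mu)$ and $\Homeo(\Can,\mu_V)$, and the Rokhlin property is invariant under such isomorphisms; so it suffices to treat $\mu_V$. Since $V$ is ring-like it is in particular group-like, so Theorem~\ref{th:Dense} applies and reduces the task to showing that $\Mm_V$ is directed.

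Next I would pass from $\Mm_V$ to its cofinal subcategory $\Cc_V$. By Corollary~\ref{cor:MM-cofinal}, every $A\in\Mm_V$ admits an $\Mm_V$-morphism $C\to A$ with $C\in\Cc_V$. Consequently directedness of $\Cc_V$ implies directedness of $\Mm_V$: given $A_0,A_1\in\Mm_V$, pick $\Cc_V$-lifts $C_0,C_1$, then a common $\Cc_V$-lift $D$ of $C_0,C_1$, and compose to obtain $\Mm_V$-morphisms $D\to A_0$ and $D\to A_1$ (morphisms of the subcategory $\Cc_V$ being morphisms of $\Mm_V$). Thus everything comes down to directedness of $\Cc_V$, which we identify with $\Cc_V^1$.

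The heart of the argument is an explicit common lift built as a ``product'' of the two cycle structures, and this is the only place ring-likeness is used. Represent two objects of $\Cc_V^1$ as tuples $\vec c=((v_1,n_1),\ldots,(v_m,n_m))$ and $\vec d=((w_1,k_1),\ldots,(w_l,k_l))$ with $\sum_{i} n_i v_i=\sum_{j} k_j w_j=1$. I propose the common lift
\[
\vec e=\bigl((v_i w_j,\; n_i k_j)\bigr)_{1\le i\le m,\ 1\le j\le l}.
\]
Since $V+\ZZZ$ is a ring, each $v_i w_j\in V+\ZZZ$, and as $v_i w_j\in[0,1]$ we get $v_i w_j\in(V+\ZZZ)\cap[0,1]=V$; moreover $\sum_{i,j} n_i k_j v_i w_j=\bigl(\sum_i n_i v_i\bigr)\bigl(\sum_j k_j w_j\bigr)=1$, so $\vec e\in\Cc_V^1$. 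Grouping the index set by its first coordinate yields a morphism $\vec e\to\vec c$: the $i$-th block has total $\sum_j n_i k_j v_i w_j=n_i v_i\sum_j k_j w_j=n_i v_i$, and each length $n_i k_j$ is a multiple of $n_i$. Grouping by the second coordinate gives symmetrically $\vec e\to\vec d$. Hence $\Cc_V^1$ is directed, and the proof is complete.

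The only genuine input is ring-likeness, used solely to keep the product weights $v_i w_j$ inside $V$; the margin sums telescope to the required totals $n_i v_i$ and $k_j w_j$ precisely because the total mass equals $1$ (so $\sum_j k_j w_j=1$ and $\sum_i n_i v_i=1$). The main point to verify carefully is therefore that $\vec e$ is a legitimate object of $\Cc_V^1$ and that the two coordinate projections satisfy both the total-matching and the length-divisibility requirements of a morphism in $\Mm_V$; beyond this bookkeeping I expect no essential obstacle, since the delicate converse direction (necessity of ring- or $\QQ$-likeness, and the strong Rokhlin property) is handled separately.
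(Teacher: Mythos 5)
Your proposal is correct and follows essentially the same route as the paper's proof: reduce via Theorem~\ref{th:Dense} to directedness of the cycle category $\Cc_V$ (cofinal in $\Mm_V$ by Corollary~\ref{cor:MM-cofinal}) and exhibit the common lift $\bigl((v_i w_j,\, n_i k_j)\bigr)_{i,j}$, with ring-likeness used exactly once to keep the product weights in $V$. The only difference is that you spell out what the paper leaves implicit --- the conjugacy reduction from $\mu$ to $\mu_V$ and the block-sum and divisibility checks behind its ``it is easy to see'' --- and these verifications are accurate.
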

\begin{proof}
By Theorem \ref{th:Dense}, it suffices to show that $\CC_V$ is directed. Pick $\vec c =((v_i,n_i))_{i\in I}$, $\vec d =((w_j,m_j))_{j\in J}$ 
in $\CC_V$. As $V$ is ring-like, $v_iw_j\in V$, for each $(i,j)\in I\times J$.
Hence, $\vec u=\big((v_iw_j,n_im_j\big)_{(i,j)\in I\times J}\in \CC_V$.
It is easy to see that $\vec u$ projects on $\vec c$ and $\vec d$.
\end{proof}
\begin{remark}
 Alternatively, by \cite[Proposition 3.9]{AkDoMauYi08}, if $V$ is ring-like then $\mu$ is a push-forward of $\mu^\NN$ via a homeomorphism $\psi\colon (\Can)^\NN\to\Can$. Hence, $\Homeo((\Can)^\NN,\mu^\NN)$ is topologically isomorphic to $\Homeo(\Can,\mu)$. Clearly, $\psi\Phi\psi^{-1}$, where $\Phi=\prod_{n\in\NN} \varphi_n$ has a dense conjugacy class 
 in $\Homeo(\Can,\mu)$. 
\end{remark}

\begin{lemma}\label{lem:density-divisibility}
Let $\mu$ be a good measure 
such that $\Homeo(\Can,\mu)$ has the Rokhlin property. If $n\in\NN$ is such that $\frac1n\in V_\mu$, then for every $v\in V_\mu$ we have $\frac{v}{n} \in V_\mu$. In particular, $Q:=\{n \in \NN : \frac1n \in V_\mu\}$ is a subsemigroup of \((\NN,\cdot)\) closed under taking positive divisors, and \(Q^{-1} \cdot V \subset V\) where \(Q^{-1} = \{1/n : \ n \in Q\}\).
\end{lemma}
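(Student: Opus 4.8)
The plan is to reduce the Rokhlin hypothesis to a purely combinatorial directedness statement about the category $\Cc_V$, and then to test that directedness against two carefully chosen cycle-tuples. Since $\mu$ is good, its clopen values set $V=V_\mu$ is group-like (by \cite[Proposition 2.4]{Akin05}, cf.\ Theorem~\ref{thm:fraisseisgood}), and $\mu$ is conjugate to the good measure $\mu_V$ fixed in Section~\ref{sec:amalgam-1}; conjugation carries the Rokhlin property and preserves clopen values, so we may assume $\mu=\mu_V$ and work inside the framework of Section~\ref{sec:amalgam-1}. By Theorem~\ref{th:Dense} the Rokhlin property makes $\Mm_V$ directed, and since $\Cc_V$ is cofinal in $\Mm_V$ (Corollary~\ref{cor:MM-cofinal}), directedness passes to $\Cc_V$: given $\vec c,\vec d\in\Cc_V$, a common $\Mm_V$-lift admits a further $\Cc_V$-lift by cofinality, and composing morphisms yields a common $\Cc_V$-lift of $\vec c$ and $\vec d$. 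We henceforth use the tuple description of $\Cc_V$ (identified with $\Cc_V^1$) and its morphisms from Section~\ref{sec:amalgam-2}.

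For the main claim fix $n\ge 2$ with $1/n\in V$ and $v\in V$ with $0<v<1$ (the cases $v\in\{0,1\}$ and $n=1$ being immediate, the last using only $\{0,1\}\subseteq V$ and the hypothesis $1/n\in V$). Since $V$ is group-like we have $1-v\in[0,1]\cap(V+\ZZZ)=V$, so both
\[
\vec c=\big((1/n,n)\big)\in\Cc_V^1 \qquad\text{and}\qquad \vec d=\big((v,1),(1-v,1)\big)\in\Cc_V^1
\]
are legitimate objects. Applying directedness of $\Cc_V$ we obtain a common lift $\vec e=\big((u_1,m_1),\ldots,(u_r,m_r)\big)\in\Cc_V^1$ together with morphisms $\vec e\to\vec c$ and $\vec e\to\vec d$. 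Unravelling the definition of a $\Cc_V$-morphism, the one onto $\vec c$ forces $n\mid m_i$ for every $i$ (the divisibility clause $m_i/n\in\NN$ applied to the single block), while the one onto $\vec d$ produces a block $S\subseteq\{1,\ldots,r\}$ with $\sum_{i\in S}m_iu_i=v$.

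The payoff is then a short computation: writing $m_i=n\,m_i'$ with $m_i'\in\NN$, we obtain
\[
v=\sum_{i\in S}m_iu_i=n\sum_{i\in S}m_i'u_i,\qquad\text{hence}\qquad \frac{v}{n}=\sum_{i\in S}m_i'u_i .
\]
The right-hand side is a nonnegative integer combination of elements of $V$, so it lies in the group $V+\ZZZ$; as it also lies in $[0,1]$, group-likeness gives $v/n\in V$, which is the main assertion. For the ``in particular'' part, $Q^{-1}\cdot V\subseteq V$ is exactly this assertion restated. Closure of $Q$ under multiplication follows by feeding the claim into itself: if $1/m,1/n\in V$, applying the claim with divisor $m$ to the value $1/n\in V$ gives $1/(mn)\in V$. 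Closure of $Q$ under positive divisors needs only group-likeness: if $d\mid n$ then $1/d=(n/d)\cdot(1/n)\in[0,1]\cap(V+\ZZZ)=V$.

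The main obstacle I anticipate is the bookkeeping at the interface between the abstract category and its tuple calculus: one must pick the two test tuples so that the \emph{divisibility} half of the morphism onto $\vec c$ and the \emph{mass-splitting} half of the morphism onto $\vec d$ combine to expose the quotient $v/n$, and then confirm that $\vec c$, $\vec d$, and the extracted combination all respect the standing constraints (entries in $V$, positivity of the weights, total mass $1$). Once the two morphism conditions are read off correctly, the remainder is the elementary group-like closure argument above.
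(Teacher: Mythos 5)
Your proof is correct and takes essentially the same route as the paper: you reduce via Theorem~\ref{th:Dense} (and cofinality of $\Cc_V$ in $\Mm_V$) to directedness of $\Cc_V$, test it against the very same two tuples $\big((\tfrac1n,n)\big)$ and $\big((v,1),(1-v,1)\big)$, read off the divisibility $n\mid m_i$ from the first morphism and the block-sum $\sum_{i\in S}m_iu_i=v$ from the second, and conclude $v/n\in(V+\ZZZ)\cap[0,1]=V$ by group-likeness. The ``in particular'' clauses are handled exactly as in the paper (feeding $v=1/m$ back into the main claim for multiplicativity of $Q$), with your explicit divisor-closure computation a minor addition the paper leaves implicit.
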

\begin{proof}
Note that $\CC_V$ is directed by Theorem~\ref{th:Dense}. Pick $v\in V$. Hence, there is $\vec c\in \CC_V$ projecting onto both, $(\frac1n,n)\in\CC_V$ and $((v,1),(1-v,1))\in\CC_V$. Let $(w_1,n_1),\ldots,(w_k,n_k)$ be those cycles in $\vec c$ that are mapped onto $(v,1)$. Since they also get mapped onto $(\frac1n,n)$, we infer that $n$ divides $n_i$ for $1\leq i\leq k$. Then it is easy to check that $\frac{v}{n} = \sum_{i=1}^k \frac{n_i}{n} w_i\in V$. 

Now if \(n,m \in Q\), then the above argument applies to \(v = 1/m\). Consequently, \(\frac{1}{nm} \in V\) and thus \(n \cdot m \in Q\). The last claim of the lemma follows from the first part of the proof.
\end{proof}
\begin{corollary}
Let $V$ be group-like. If $\Homeo(\Can,\mu_V)$ has the Rokhlin property, then either $V \cap \QQ=\{0,1\}$ or $V$ is module-like, i.e., $V=M \cap [0,1]$, where $M$ is a module over a dense subring $R \subseteq \QQ$.
\end{corollary}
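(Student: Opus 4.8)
The plan is to feed the stated dichotomy through \LEM{density-divisibility}, which under the Rokhlin property already supplies the divisor-closed multiplicative semigroup $Q = \{n \in \NN : 1/n \in V\}$ together with the crucial closure property $Q^{-1} \cdot V \subseteq V$, where $Q^{-1} = \{1/n : n \in Q\}$. Writing $G = V + \ZZZ$ for the additive subgroup of $\RR$ attached to the group-like set $V$ (so that $V = G \cap [0,1]$), the candidate module will be $M := G$ and the candidate ring will be the subring $R := \ZZZ[Q^{-1}]$ of $\QQ$ generated by $\ZZZ$ and all $1/n$ with $n \in Q$.

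First I would dispose of the trivial alternative: if $V \cap \QQ = \{0,1\}$ there is nothing to prove. So assume there is a rational $q \in V$ with $0 < q < 1$, and write $q = a/b$ in lowest terms, $b \geq 2$. The key observation is that such a $q$ already forces a unit fraction into $V$: since $G$ contains both $a/b$ and $1$ and $\gcd(a,b) = 1$, B\'ezout's identity $xa + yb = 1$ gives $1/b = x(a/b) + y \in G$, whence $1/b \in G \cap [0,1] = V$ and therefore $b \in Q$. This is the heart of the argument, the place where rationality of some value is converted, through the group structure of $G$, into divisibility data recorded by $Q$. In particular $Q \neq \{1\}$, so $R$ contains $1/b$ with $b \geq 2$; as $R$ then contains all $b$-adic rationals $m/b^k$, it is a dense subring of $\QQ$.

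It remains to verify that $M = G$ is an $R$-module with $V = M \cap [0,1]$; the latter is immediate from $V = G \cap [0,1]$. For the module structure I would first upgrade the closure property $Q^{-1} \cdot V \subseteq V$ of the lemma to $Q^{-1} \cdot G \subseteq G$: writing $g = v + k$ with $v \in V$ and $k \in \ZZZ$, and using that $1/n \in V$ for $n \in Q$, one gets $g/n = v/n + k(1/n) \in G$, since $v/n \in V \subseteq G$ and $G$ is a group. Then I would note that $R' := \{r \in \QQ : rG \subseteq G\}$ is a subring of $\QQ$ containing both $\ZZZ$ and $Q^{-1}$, so $R = \ZZZ[Q^{-1}] \subseteq R'$; this yields $R \cdot G \subseteq G$ and makes $M = G$ an $R$-module. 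Hence $V = M \cap [0,1]$ is module-like, completing the proof.

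I do not anticipate a serious obstacle once \LEM{density-divisibility} is in hand: the only genuinely new step is the B\'ezout argument extracting $1/b \in V$ from an arbitrary rational value $a/b \in V$, and the routine bookkeeping ensuring that $R = \ZZZ[Q^{-1}]$ is simultaneously dense in $\RR$ and acts on $G$.
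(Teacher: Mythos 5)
Your proof is correct and takes essentially the same approach as the paper: there the corollary is stated without proof as an immediate consequence of \LEM{density-divisibility}, and your argument simply makes that deduction explicit. The B\'ezout step extracting $1/b \in V$ from a nontrivial rational $a/b \in V$ (forcing $b \in Q$, hence density of $\ZZZ[Q^{-1}]$) and the verification that $M = V + \ZZZ$ is a module over $R = \ZZZ[Q^{-1}]$ via $Q^{-1} \cdot V \subseteq V$ are exactly the routine details the paper leaves to the reader.
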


\begin{corollary}
If \(V\) is a group-like set that contains \(\QQ \cap [0,1]\) but is not \(\QQ\)-like, then \(\Homeo(\Can,\mu_V)\) does not have the Rokhlin property.
\end{corollary}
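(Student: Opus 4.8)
The plan is to prove the contrapositive: assuming both that \(V\) contains \(\QQ \cap [0,1]\) \emph{and} that \(\Homeo(\Can,\mu_V)\) has the Rokhlin property, I would show that \(V\) is \(\QQ\)-like, i.e.\ that the additive group \(V + \ZZZ\) is a \(\QQ\)-vector space. Since \(V + \ZZZ\) is already an additive subgroup of \(\RR\), being a \(\QQ\)-vector space is equivalent to being divisible: for every \(x \in V + \ZZZ\) and every \(n \in \NN\) one has \(x/n \in V + \ZZZ\). So the whole argument reduces to establishing this divisibility.

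The key input is \LEM{density-divisibility}. First I would observe that since \(\QQ \cap [0,1] \subseteq V\), we have \(1/n \in V\) for every \(n \in \NN\); in the notation of that lemma this says precisely that \(Q = \NN\). Consequently the lemma applies to every denominator \(n\), yielding \(v/n \in V\) for all \(v \in V\) and all \(n \in \NN\).

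Next I would verify divisibility of \(V + \ZZZ\) directly. Fix \(x \in V + \ZZZ\) and \(n \in \NN\), and write \(x = v + k\) with \(v \in V\) and \(k \in \ZZZ\). Then
\[
\frac{x}{n} = \frac{v}{n} + \frac{k}{n},
\]
where \(v/n \in V \subseteq V + \ZZZ\) by the previous step, and \(k/n \in \QQ \subseteq V + \ZZZ\) because \(\QQ \cap [0,1] \subseteq V\) forces \(\QQ = (\QQ \cap [0,1]) + \ZZZ \subseteq V + \ZZZ\). Since \(V + \ZZZ\) is a group, it is closed under this sum, so \(x/n \in V + \ZZZ\). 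Hence \(V + \ZZZ\) is divisible, so it is a \(\QQ\)-vector space, i.e.\ \(V\) is \(\QQ\)-like, contradicting the hypothesis and proving the corollary.

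This argument is essentially immediate once \LEM{density-divisibility} is in hand, so there is no serious obstacle. The only point deserving care is the bookkeeping that the hypothesis \(\QQ \cap [0,1] \subseteq V\) does double duty: it (i) makes \(Q = \NN\), so that the divisibility lemma is available for \emph{every} denominator, and (ii) guarantees \(\QQ \subseteq V + \ZZZ\), which is exactly what is needed to absorb the integer part \(k/n\) in the decomposition above.
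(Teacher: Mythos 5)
Your proof is correct and follows essentially the same route as the paper: the corollary is exactly the ``if'' direction of \LEM{Q-in-V}, which the paper likewise deduces as an immediate consequence of \LEM{density-divisibility}. Your only addition is to spell out the bookkeeping (that \(Q = \NN\) and that \(\QQ \subseteq V + \ZZZ\) absorbs the integer part), which the paper leaves implicit.
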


\noindent Next result characterizes 
the existence of dense/comeager conjugacy classes in $\Homeo(\Can,\mu)$ for good measures $\mu$ with $V_\mu\subseteq\QQ$. Let $\bP$ be the set of primes.
\begin{theorem}\label{thm:Q-ring}
Let $\mu$ be a good measure on $\Can$ with $V_\mu\subseteq \QQ$. 
\TFCAE
\begin{enumerate}[\upshape(1)]
    \item\label{cond:Q:i} $\Homeo(\Can,\mu)$ has the strong Rokhlin property.
    \item\label{cond:Q:ii} $\Homeo(\Can,\mu)$ has the Rokhlin property.
    \item\label{cond:Q:iii} $V_\mu$ is ring-like.
\end{enumerate}
\end{theorem}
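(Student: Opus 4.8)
The plan is to establish the cycle of implications (1)$\Rightarrow$(2)$\Rightarrow$(3)$\Rightarrow$(1). The first implication is immediate, since a dense $\ggG_\delta$ conjugacy class is in particular dense. The two remaining implications carry the content: the second extracts the ring structure of $V_\mu+\mathbb{Z}$ from the Rokhlin property, while the third upgrades ring-likeness all the way to the strong Rokhlin property by verifying the hypotheses of Corollary~\ref{co:Directed&CAMImpliesNonMeager}.

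For (2)$\Rightarrow$(3) I would argue through the classification of subgroups of $\QQ$. Write $V=V_\mu$ and $G=V+\mathbb{Z}$, a subgroup of $\QQ$ containing $\mathbb{Z}$. Such groups are determined by their $p$-heights: for each prime $p$ there is $h_p\in\{0,1,2,\ldots,\infty\}$ with $G/\mathbb{Z}=\bigoplus_p H_p$, where $H_p$ is the subgroup of the Pr\"ufer group $\mathbb{Z}(p^\infty)$ of elements of order dividing $p^{h_p}$; concretely $1/p^k\in V$ iff $k\le h_p$. Being ring-like is exactly the condition that every $h_p\in\{0,\infty\}$: a finite positive $h_p$ would give $1/p^{h_p}\in G$ but $1/p^{2h_p}\notin G$, contradicting closure under multiplication, while conversely $h_p\in\{0,\infty\}$ yields $G=\mathbb{Z}[1/p:h_p=\infty]$, a subring. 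Now I invoke Lemma~\ref{lem:density-divisibility}: the Rokhlin property makes $Q=\{n:1/n\in V\}$ a multiplicative subsemigroup of $\NN$. If $h_p\ge 1$ then $p\in Q$, hence $p^k\in Q$ for all $k$, forcing $h_p=\infty$; thus every $h_p\in\{0,\infty\}$ and $V$ is ring-like.

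For (3)$\Rightarrow$(1), assume $V$ ring-like, so $R:=V+\mathbb{Z}=\mathbb{Z}[1/p:p\in S]$ for a set of primes $S$ and $V=R\cap[0,1]$. Directedness of $\Mm_V$ is already available from Lemma~\ref{lem:ringlike-densityproperty} together with Theorem~\ref{th:Dense}, so by Corollary~\ref{co:Directed&CAMImpliesNonMeager} it remains to produce weak amalgamation in $\Mm_V$. Since $\CC_V$ is cofinal in $\Mm_V$ (Corollary~\ref{cor:MM-cofinal}), it suffices to show that $\CC_V$ has cofinal amalgamation, and by Proposition~\ref{prop:cofinalcharacterization} this reduces to writing each $v\in V$ as a sum of elements $v_i\in V$ with $\CC_V^{v_i}$ directed. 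The key lemma I would isolate is that $\CC_V^v$ is directed whenever $v$ is a unit of $R$ in $(0,1]$, i.e. $1/v\in R$. Given $\vec c=((v_i,n_i))_i$ and $\vec d=((w_j,m_j))_j$ in $\CC_V^v$, I form the common lift $\vec e=((z_{ij},L_{ij}))_{i,j}$ with $L_{ij}=\mathrm{lcm}(n_i,m_j)$ and $z_{ij}=\gcd(n_i,m_j)\,v_iw_j/v$. Because $R$ is a ring and $1/v\in R$, one checks $z_{ij}\in R$; it is positive and bounded by $\gcd(n_i,m_j)v_i\cdot w_j/v\le w_j\le 1$, so $z_{ij}\in V$. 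Moreover $L_{ij}z_{ij}=n_im_jv_iw_j/v$, so summing over $j$ (resp. $i$) recovers $n_iv_i$ (resp. $m_jw_j$), and the divisibilities $n_i\mid L_{ij}$ and $m_j\mid L_{ij}$ make the grouping by first (resp. second) coordinate an $\Mm_V$-morphism of $\vec e$ onto $\vec c$ (resp. $\vec d$). Finally, for arbitrary $v\in V$ with reduced denominator $N$ (necessarily a product of primes in $S$), the fraction $1/N$ is a unit of $R$ in $(0,1]$ and $v=(1/N)+\cdots+(1/N)$ exhibits the decomposition required by Proposition~\ref{prop:cofinalcharacterization}. Hence $\CC_V$ has cofinal amalgamation, $\Mm_V$ has weak amalgamation, and Corollary~\ref{co:Directed&CAMImpliesNonMeager} delivers the strong Rokhlin property.

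The main obstacle I anticipate is the directedness of $\CC_V^v$ for a unit $v$: one must simultaneously match the edge weights (handled by the ring multiplication $v_iw_j$ together with division by the unit $v$) and the cycle lengths (handled by passing to $\mathrm{lcm}(n_i,m_j)$, with the compensating factor $\gcd(n_i,m_j)$ keeping the per-edge weight inside $R$). It is exactly here that ring-likeness is used in an essential way, both through $1/v\in R$ and through $v_iw_j\in R\cap[0,1]=V$; without it the naive product lift escapes the value set $V$. Once this lemma and the trivial decomposition $v=a\cdot(1/N)$ are in hand, the passage from cofinal amalgamation of $\CC_V$ to weak amalgamation of $\Mm_V$, and thence to the strong Rokhlin property, is routine.
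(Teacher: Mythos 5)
Your proposal is correct and takes essentially the same route as the paper's proof: (1)$\Rightarrow$(2) trivially; (2)$\Rightarrow$(3) via Lemma~\ref{lem:density-divisibility} together with the $p$-height classification of subgroups of $\QQ$ containing $\ZZZ$ (the paper invokes the same structure theory, citing Fuchs); and (3)$\Rightarrow$(1) by combining directedness from Lemma~\ref{lem:ringlike-densityproperty} with cofinal amalgamation of $\CC_V$ obtained through Proposition~\ref{prop:cofinalcharacterization}, decomposing $v=r/q$ into copies of $1/q$ and concluding with Corollary~\ref{co:Directed&CAMImpliesNonMeager}. The only (harmless) variation is local: where the paper proves directedness of $\CC_V^{1/q}$ by rescaling to the ring-like set $V'=\{qw : w\in V\cap[0,1/q]\}$ and reusing the product construction of Lemma~\ref{lem:ringlike-densityproperty}, you build the common lift directly inside $\CC_V^{v}$ for a unit $v$, with cycle lengths $\mathrm{lcm}(n_i,m_j)$ and weights $\gcd(n_i,m_j)v_iw_j/v$ --- a correct inline rendering of the same idea.
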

\begin{proof}
If $V$ is ring-like, then $\CC_V$ is directed by Theorem~\ref{th:Dense} and Lemma~\ref{lem:ringlike-densityproperty}. By Corollary \ref{co:Directed&CAMImpliesNonMeager}, it suffices to show that it also has cofinal amalgamation. We apply Proposition~\ref{prop:cofinalcharacterization}. Let $v\in V$. Write $v=r/q$, where $r$ and $q:=p_1^{n_1}\cdots p_k^{n_k}$ are coprime, $p_1,\ldots,p_k$ are distinct primes, and $n_1,\ldots,n_k\in\NN$. It is enough to show that $\CC_V^{1/q}$ 
is directed. However, $\CC_V^{1/q}$ 
is directed if and only if $\CC_{V'}$ is directed, where $V'=\{
qw : w\in V\cap [0,1/q ]\}$.
Clearly, $V'$ is ring-like, so we conclude using Theorem~\ref{th:Dense} and Lemma~\ref{lem:ringlike-densityproperty}.

Now, suppose $V$ is not ring-like. Then $V$ is uniquely determined by the set $J:=\{p^{-n}\in V :p\in\bP,\ n\in\NN\}$.
Moreover, $V$ is ring-like if and only if there is
$\emptyset\neq P\subseteq\bP$ such that if $p\in\bP$ and $n\in\NN$ then $p^{-n}\in V$ if and only if $p\in P$. 
If $\Homeo(\Can,\mu)$ had the Rokhlin property, then using Lemma~\ref{lem:density-divisibility} we could find $P\subseteq\bP$ as above, which would contradict the fact that $V$ is not ring-like.
\end{proof}

For $W \subseteq \QQ$ and $p\in\bP$, let $n^W_p \in \NN \cup \{0,\infty\}$ be equal $\sup\{n\in\NN \cup \{0\}\dd\ 1/p^n\in W\}$. 
By \cite[Sec.~85, Chap.~XIII]{Fu}, if $W$ is a group that contains \(\mathbb{Z}\), then $W$ is characterized by the sequence $(n^V_p)_{p\in\bP}$. 

\begin{corollary}
Let $\mu$ be a good measure 
with $V_\mu \subseteq \QQ$. 
Then 
$\Homeo(\Can,\mu)$ has the strong Rokhlin property 
if and only if $n^{V_\mu+\ZZZ}_p \in \{0,\infty\}$ for every $p\in\bP$.   
\end{corollary}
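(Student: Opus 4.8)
The plan is to reduce everything to \THM{Q-ring} together with the classification of subgroups of \(\QQ\) containing \(\ZZZ\) that is cited just above the statement. By \THM{Q-ring}, the strong Rokhlin property of \(\Homeo(\Can,\mu)\) is equivalent to \(V_\mu\) being ring-like, i.e.\ to \(W := V_\mu + \ZZZ\) being a subring of \(\RR\). Since \(V_\mu \subseteq \QQ\), the set \(W\) is a subgroup of \(\QQ\) containing \(\ZZZ\), and being a subring is the same as being closed under multiplication (it already contains \(1\)). So the whole task is to show that multiplicative closure of \(W\) is equivalent to \(n^W_p \in \{0,\infty\}\) for every prime \(p\), where I write \(n^W_p = n^{V_\mu + \ZZZ}_p\).

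First I would handle the easy direction. Suppose \(W\) is a subring but \(0 < n^W_p < \infty\) for some \(p\). Noting that \(\{n \geq 0 : 1/p^n \in W\}\) is downward-closed (as \(1/p^{n-1} = p\cdot 1/p^n \in W\)) and bounded, the supremum is attained, so \(1/p^{n^W_p} \in W\); then its square \(1/p^{2 n^W_p}\) lies in \(W\) as well, and \(2 n^W_p > n^W_p\) contradicts the definition of \(n^W_p\) as the supremum. Hence every height lies in \(\{0,\infty\}\).

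For the converse I would invoke the structure theorem \cite[Sec.~85, Chap.~XIII]{Fu}: a subgroup \(W \subseteq \QQ\) with \(\ZZZ \subseteq W\) is exactly \(\{x \in \QQ : v_p(x) \geq -n^W_p \text{ for all } p\}\), where \(v_p\) denotes the \(p\)-adic valuation. Assuming all heights lie in \(\{0,\infty\}\) and setting \(P = \{p : n^W_p = \infty\}\), this description collapses to \(W = \ZZZ[1/p : p \in P]\), a localization of \(\ZZZ\) and hence a subring of \(\QQ\); thus \(V_\mu\) is ring-like and the strong Rokhlin property holds by \THM{Q-ring}. Here \(P \neq \emptyset\) automatically, since group-like sets are required to be infinite, forcing \(W \supsetneq \ZZZ\) and hence at least one infinite height.

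I do not anticipate a genuine obstacle: the only slightly delicate point is making the reconstruction of \(W\) from its heights—and its consequent product-closure—precise, but this is exactly the content of the cited structure theorem, so the argument amounts to translating ``ring-like'' into the language of the invariants \(n^{V_\mu+\ZZZ}_p\).
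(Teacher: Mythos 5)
Your proposal is correct and follows exactly the route the paper intends: the corollary is stated as an immediate consequence of Theorem~\ref{thm:Q-ring} together with the cited classification of subgroups of \(\QQ\) containing \(\ZZZ\) by their height sequences, and your argument simply fills in the routine translation (subring \(\Leftrightarrow\) all heights \(n^{V_\mu+\ZZZ}_p\in\{0,\infty\}\), with the forward direction by squaring \(1/p^{n^W_p}\) and the converse by identifying \(W\) with the localization \(\ZZZ[1/p:p\in P]\)). Both directions check out, so there is nothing to add.
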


Finally, we provide a few results about good measures $\mu$ such that $V_\mu\not\subseteq \QQ$. 
We start with a simple characterization of $\QQ$-like sets.

\begin{lemma}\label{lem:Q-in-V}
Assume $V$ is group-like. Then $V$ is $\QQ$-like if and only if $\QQ\cap[0,1]\subseteq V$ and \(\Homeo(\Can,\mu_V)\) has the Rokhlin property.
 \end{lemma}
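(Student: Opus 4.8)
The plan is to prove the equivalence by establishing both directions, using the characterization of the Rokhlin property via directedness of $\CC_V$ (from Theorem~\ref{th:Dense}) together with the divisibility structure extracted in Lemma~\ref{lem:density-divisibility}.

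\textbf{The forward direction.} Assume $V$ is $\QQ$-like, i.e.\ $V + \ZZZ$ is a rational vector space. First I would note that a $\QQ$-like set automatically contains $\QQ \cap [0,1]$: since $1 \in V$ and $V+\ZZZ$ is closed under multiplication by every rational, in particular $\frac{1}{n} \in V+\ZZZ$ for every $n$, and intersecting with $[0,1]$ gives $\frac{1}{n} \in V$, hence all of $\QQ \cap [0,1] \subseteq V$. For the Rokhlin property, I would observe that every $\QQ$-like set is in particular ring-like (a $\QQ$-vector subspace of $\RR$ containing $1$ is closed under the multiplication needed, or more directly one invokes that $\QQ$-like $\Rightarrow$ ring-like is immediate since $V+\ZZZ \supseteq \QQ$ forces $V+\ZZZ$ to be a field). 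Then Lemma~\ref{lem:ringlike-densityproperty} gives the Rokhlin property directly.

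\textbf{The reverse direction.} Assume $\QQ \cap [0,1] \subseteq V$ and $\Homeo(\Can,\mu_V)$ has the Rokhlin property; I must show $V + \ZZZ$ is a $\QQ$-vector space, equivalently that $V + \ZZZ$ is closed under multiplication by $\frac{1}{n}$ for every $n \in \NN$. Here is where Lemma~\ref{lem:density-divisibility} is the crucial tool: since $\QQ \cap [0,1] \subseteq V$, for every $n \in \NN$ we have $\frac{1}{n} \in V_\mu$, so $n \in Q = \{n : \frac{1}{n} \in V_\mu\}$ for all $n$, i.e.\ $Q = \NN$. The lemma then yields $Q^{-1} \cdot V \subseteq V$, which says precisely that $\frac{v}{n} \in V$ for every $v \in V$ and every $n \in \NN$. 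Lifting this to $V + \ZZZ$ (using that $\ZZZ$ is already $\frac{1}{n}$-divisible into $V+\ZZZ$ because $\frac{k}{n} \in \QQ \cap[0,1] + \ZZZ \subseteq V + \ZZZ$), I conclude $\frac{1}{n}(V+\ZZZ) \subseteq V+\ZZZ$ for all $n$, so $V+\ZZZ$ is a $\QQ$-vector space.

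\textbf{Main obstacle.} The delicate point is the bookkeeping in the reverse direction: Lemma~\ref{lem:density-divisibility} is stated for $v \in V_\mu \subseteq [0,1]$, whereas $\QQ$-likeness is a statement about the \emph{group} $V + \ZZZ$ and about arbitrary rational multiples, not just division. I expect the main care to lie in checking that divisibility of $V$ (the $[0,1]$-part) propagates correctly to divisibility of $V+\ZZZ$, and in confirming that closure under the single operation $v \mapsto \frac{v}{n}$ for all $n$, combined with the additive group structure, genuinely upgrades $V+\ZZZ$ to a full $\QQ$-vector space rather than merely a divisible group. Since $V + \ZZZ$ is torsion-free and divisibility of a torsion-free abelian group is equivalent to being a $\QQ$-vector space, this last step is in fact automatic once divisibility by every $n$ is established, which dispatches the obstacle cleanly.
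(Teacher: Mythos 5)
Your reverse direction is correct and is exactly the paper's argument: since $\QQ\cap[0,1]\subseteq V$, every $n\in\NN$ belongs to the set $Q$ of Lemma~\ref{lem:density-divisibility}, so the Rokhlin property gives $v/n\in V$ for all $v\in V$ and $n\in\NN$; this lifts to $V+\ZZZ$ because $\QQ\subseteq V+\ZZZ$, and a divisible torsion-free abelian group is indeed a $\QQ$-vector space. The forward direction, however, contains a genuine error: $\QQ$-like does \emph{not} imply ring-like, and $V+\ZZZ\supseteq\QQ$ does not force $V+\ZZZ$ to be a field or even a ring. A $\QQ$-vector subspace of $\RR$ containing $1$ need not be closed under multiplication: take $V=(\QQ+\QQ\alpha)\cap[0,1]$ with $\alpha=2^{1/3}$ (or any transcendental $\alpha\in(0,1)$). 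Then $V$ is group-like and $\QQ$-like, but $\alpha^2\notin\QQ+\QQ\alpha$, so $V+\ZZZ$ is not a subring of $\RR$. Consequently Lemma~\ref{lem:ringlike-densityproperty} --- whose proof requires the products $v_iw_j$ of arbitrary cycle weights to remain in $V$ --- is simply unavailable, and your proof of the Rokhlin property collapses precisely in the case that makes the lemma interesting, namely $V\not\subseteq\QQ$. (Note the two notions are incomparable: the dyadic rationals are ring-like but not $\QQ$-like.)

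The Rokhlin property for $\QQ$-like $V$ is a genuine theorem rather than a formal consequence of the ring-like case: the paper's `only if' direction appeals to Akin's result (Corollary~\ref{co:GoodAmalg}, i.e.\ \cite[Theorem 4.17]{Akin05}), proved in this paper via Theorem~\ref{th:Amalg}, whose amalgamation argument uses $\QQ$-likeness essentially --- one divides cycle weights by winding numbers to reduce to $p_i$-winding number $1$ before applying Lemma~\ref{le:CommonPartition} --- and in fact yields the \emph{strong} Rokhlin property. Your observation that $\QQ$-likeness gives $\QQ\cap[0,1]\subseteq V$ is fine (since $V=(V+\ZZZ)\cap[0,1]$ for group-like $V$). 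So the verdict is: reverse direction correct and identical to the paper's one-line appeal to Lemma~\ref{lem:density-divisibility}; forward direction must be repaired by invoking Corollary~\ref{co:GoodAmalg} (or reproducing the amalgamation argument of Theorem~\ref{th:Amalg}) in place of the false reduction to Lemma~\ref{lem:ringlike-densityproperty}.
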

 \begin{proof}
The `only if' direction follows from Akin's result \cite{Akin05} (cf. Corollary~\ref{co:GoodAmalg} below). The other direction is an immediate consequence of Lemma~\ref{lem:density-divisibility}.
 \end{proof}

Let $A,B \in \Cc_V$ and $p\colon B \rightarrow A$ be a morphism. The \emph{$p$-winding number} of a cycle $C$ in $B$ is the cardinality of $p^{-1}(p(c))$ for $c \in C$. The $p$-winding number of $C$ is the number of times $C$ winds over the cycle it is mapped onto by $p$, so it is well-defined. 

\begin{theorem}
\label{th:Amalg}
Assume that $V$ is $\QQ$-like. Then $\Cc_V$ has amalgamation.
\end{theorem}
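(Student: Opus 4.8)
We need to show that when $V$ is $\QQ$-like, the category $\Cc_V$ has amalgamation. Recall $\Cc_V$ consists of matrices $A \in \Mm_V$ having a single non-zero entry per row, so $G(A)$ is a disjoint union of directed cycles. Via the identification with tuples $((v_1,n_1),\ldots,(v_m,n_m))$ satisfying $\sum_i n_i v_i = 1$, a morphism amounts to partitioning cycles and winding them (the $p$-winding number records how many times a source cycle covers a target cycle). The $\QQ$-like hypothesis means $V+\ZZZ$ is a rational vector space; crucially, by Lemma~\ref{lem:Q-in-V} this gives $\QQ \cap [0,1] \subseteq V$, so $V$ is closed under dividing any of its elements by arbitrary positive integers.

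**The approach.** Given morphisms $r_i \colon B_i \to A$ for $i=0,1$ in $\Cc_V$, I want to build $C \in \Cc_V$ with $s_i \colon C \to B_i$ and $r_0 \circ s_0 = r_1 \circ s_1$. Since morphisms decompose cycle-by-cycle over the target, I plan to reduce to amalgamating over a single cycle of $A$: fix a cycle $\gamma$ in $A$ of weight $1/N$ (so total mass $1/N \cdot N = $ mass of $\gamma$), and look at the cycles of $B_0$ and of $B_1$ lying above $\gamma$. Each such cycle in $B_i$ winds over $\gamma$ some number $d$ of times and carries weight $1/(Nd)$. The task is to find a common refinement cycle-structure $C_\gamma$ above $\gamma$ that simultaneously projects onto the $B_0$-part and the $B_1$-part compatibly with the maps down to $\gamma$.

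**Key construction over one cycle.** The winding numbers appearing above $\gamma$ in $B_0$ form a multiset, say with values $d^0_1, d^0_2, \ldots$, and similarly $d^1_1, d^1_2, \ldots$ for $B_1$; in each case these count covers of the $N$-cycle $\gamma$. I plan to take $C_\gamma$ to consist of cycles winding over $\gamma$ with winding numbers that are common multiples of the relevant $d$'s, so that each $C$-cycle factors through both a $B_0$-cycle and a $B_1$-cycle over $\gamma$. Concretely, a cycle of winding number $\ell$ over $\gamma$ maps onto a $B_i$-cycle of winding number $d$ precisely when $d \mid \ell$ (the $B_i$-cycle is covered $\ell/d$ times). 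The $\QQ$-like condition is exactly what lets me realize cycles of any prescribed winding number and split their total mass into cells of the required weights in $V$: since $\QQ \cap [0,1] \subseteq V$ and $V$ is a rational vector space modulo $\ZZZ$, I can use Lemma~\ref{le:CommonPartition} together with divisibility to produce cells of weight $1/(N\ell)$ and combine them to match simultaneously the $B_0$-cell-weights and $B_1$-cell-weights. I would set this up as a bookkeeping problem: choose $\ell$ a common multiple of all winding numbers over $\gamma$ (or assemble several cycles of suitable windings so that the total mass $1/N$ is preserved), then use the common-partition lemma to subdivide masses consistently with both projections.

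**Main obstacle and how I handle it.** The hard part is the simultaneous bookkeeping: a single cycle $C_\gamma$ of winding number $\ell$ must project \emph{both} onto the chosen $B_0$-cycle and onto the chosen $B_1$-cycle, and the morphism condition forces the cell-weights along $C_\gamma$ to refine both target weightings at once. This is precisely a two-sided common-refinement problem, and Lemma~\ref{le:CommonPartition} is the tool that resolves it at the level of individual weights; divisibility (from $\QQ$-likeness via Lemma~\ref{lem:Q-in-V} and Lemma~\ref{lem:density-divisibility}) is what guarantees the winding numbers can be matched so that $\ell/d_0, \ell/d_1 \in \NN$. I expect the cleanest route is to phrase everything through the tuple calculus of Section~\ref{sec:amalgam-2}: reduce via Proposition~\ref{prop:cofinalcharacterization}-style arguments to directedness of the relevant $\CC_V^v$, then amalgamate each cycle of $A$ separately and take the disjoint union (the sum of amalgams is an amalgam of the sum). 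Verifying that the resulting $C$ genuinely lies in $\Cc_V$ (all weights in $V$, balanced, summing to $1$) and that $r_0 \circ s_0 = r_1 \circ s_1$ is then a routine but careful check that I would leave largely to direct computation.
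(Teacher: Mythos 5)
Your proposal is correct in substance and follows essentially the same route as the paper's proof: reduce to the case where $A$ is a single cycle of size $n$, use $\QQ$-likeness to make the winding numbers over $A$ compatible (the paper normalizes all $p_i$-winding numbers to $1$ by lifting; your lcm-winding cycles in $C$ amount to the same maneuver), and then invoke Lemma~\ref{le:CommonPartition} on the cycle weights to build the amalgam, aligning phases over an enumeration $a_1,\ldots,a_n$ of the $A$-cycle so that $r_0\circ s_0=r_1\circ s_1$.

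Two slips should be corrected, though neither is fatal. First, a cycle of winding number $d$ above $\gamma$ need not carry weight $1/(Nd)$: several cycles with arbitrary weights $w_j\in V$ and windings $d_j$ may lie above $\gamma$, constrained only by $\sum_j d_j w_j = 1/N$; indeed your claim is inconsistent with your own multiset of winding numbers, since it would force a single cycle above $\gamma$. This does not derail the argument, because the common-partition step you describe handles arbitrary weights: apply Lemma~\ref{le:CommonPartition} to the scaled weights $d_j w_j/\ell$, which lie in $V$ by $\QQ$-likeness. Second, your citations of Lemma~\ref{lem:Q-in-V} and Lemma~\ref{lem:density-divisibility} are circular in the paper's logical order: the relevant direction of the former rests on Corollary~\ref{co:GoodAmalg}, and the hypothesis of the latter (the Rokhlin property) is, for $\QQ$-like $V$, exactly what Theorem~\ref{th:Amalg} is ultimately used to establish. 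Fortunately neither lemma is needed: every divisibility fact you use (that $v\in V$ and $n\in\NN$ with $v/n\in[0,1]$ give $v/n\in V$, and in particular $\QQ\cap[0,1]\subseteq V$) is immediate from the definition of $\QQ$-like, since $V+\ZZZ$ is a rational vector space containing $1$.
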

\begin{proof}
Fix $A,B_0,B_1 \in \Cc_V$. Take morphisms $p_0\colon B_0 \rightarrow A$ and  $p_1\colon B_1 \rightarrow A$. Clearly, we can assume that $A$ is a single cycle of size $n$. Because $V$ is $\QQ$-like, without loss of generality, we can assume that for $i=0,1$ each cycle in $B_i$ has the $p_i$-winding number $1$, 
i.e., all the cycles in $B_i$ 
have size $n$. 

Let $x^0_i \in V$ ($1\leq i \leq k$) and $x^1_j \in V$ ($1\leq j \leq l$) be weights of the cycles in $B_0$, respectively, $B_1$. Let $z$ be the weight of the unique cycle in $A$. Then Lemma \ref{le:CommonPartition} provides weights $z_s \in V$ ($1\leq s \leq m$) and partitions $X^i_0$ ($1\leq i \leq k$), $X^j_1$ ($1\leq j \leq l$) of $\{1,\ldots,m\}$. We set $C$ to consist of  $m$ cycles of size $n$ indexed  as $c^s_1,\ldots,c^s_n$ with weight $z_s$ ($1\leq s\leq m$). To define morphisms $q_i\colon C\to B_i$ ($i=0,1$), 
 we first enumerate vertices of the unique cycle in $A$ as $a_1,\ldots,a_n$. Then, 
 for every $1\leq t\leq n$ and $1\leq s\leq m$
we find indices $1\leq i\leq k$ and $1\leq j\leq l$ such that $s\in X_0^i\cap X_1^j$, then we find cycles $B'_0$ in $B_0$ and $B'_1$ in $B_1$ with weights $x^i_0$, respectively $x^j_1$, and finally, we set $q_0(c_t)=b^0_t$ and  $q_1(c_t)=b^1_t$, 
where $b^r_t$ ($r=0,1$) is the unique vertex of the cycle in $B'_r$ such that $p_r(b^r_t)=a_t$.
It is straightforward to verify that $q_0$ and $q_1$ are morphisms witnessing that $C$ amalgamates $B_0$ and $B_1$ over $A$.
\end{proof}

\begin{corollary}[{\upshape\cite[Theorem 4.17]{Akin05},\cite[Proposition 6.8]{IM16}}]
\label{co:GoodAmalg}
Assume that $V$ is $\QQ$-like. Then $\Homeo(\Can,\mu_V)$ has the strong Rokhlin property.
\end{corollary}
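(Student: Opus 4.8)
The plan is to feed the amalgamation of $\Cc_V$ supplied by Theorem~\ref{th:Amalg} into the sufficient condition of Corollary~\ref{co:Directed&CAMImpliesNonMeager}, which requires $\Mm_V$ to be both directed and to have weak amalgamation. Since $\Cc_V$ is cofinal in $\Mm_V$ by Corollary~\ref{cor:MM-cofinal}, everything is reduced to the subcategory $\Cc_V$ of cyclic matrices, where Theorem~\ref{th:Amalg} applies directly.

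First I would establish directedness of $\Mm_V$. The key observation is that $\Cc_V$ possesses a terminal object, namely the $1\times 1$ matrix $[1]$ on the trivial partition $\{\Can\}$: for any $A\in\Mm_V$ the collapsing refinement $P_A\to\{\Can\}$ is a $\mu_V$-morphism, and it is balanced precisely because of the normalisation \eqref{eq:normalised}. Given $A_0,A_1\in\Mm_V$, I would first lift each to some $B_i\in\Cc_V$ using cofinality, then amalgamate $B_0$ and $B_1$ over the terminal object via Theorem~\ref{th:Amalg}; the resulting common lift of $B_0,B_1$ is then a common lift of $A_0$ and $A_1$, so $\Mm_V$ is directed.

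The main work is to derive weak amalgamation of $\Mm_V$ from the honest amalgamation of the cofinal subcategory $\Cc_V$. Fix $A\in\Mm_V$ and choose, by cofinality, a lift $A'\in\Cc_V$ with morphism $p\colon A'\to A$; I claim $A'$ is an amalgamation basis. Given $r_i\colon B_i\to A'$ ($i=0,1$) in $\Mm_V$, I would apply Lemma~\ref{le:ReverseProjection} to each $r_i$ to produce $\rho_i\colon D_i\to B_i$ with $r_i\circ\rho_i=\pi^{P_{D_i}}_{P_{A'}}$, and then, since the cofinal lift of Corollary~\ref{cor:MM-cofinal} is realised by a refining partition, further lift $D_i$ to some $E_i\in\Cc_V$ along $\pi$-refinements $\sigma_i$ so that $r_i\circ\rho_i\circ\sigma_i=\pi^{P_{E_i}}_{P_{A'}}$ becomes a $\Cc_V$-morphism into $A'$. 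Amalgamating these two $\pi$-refinements inside $\Cc_V$ by Theorem~\ref{th:Amalg} yields $F\in\Cc_V$ and $t_i\colon F\to E_i$ with $\pi^{P_{E_0}}_{P_{A'}}\circ t_0=\pi^{P_{E_1}}_{P_{A'}}\circ t_1$. Setting $s_i=\rho_i\circ\sigma_i\circ t_i\colon F\to B_i$ then forces $r_0\circ s_0=r_1\circ s_1$, hence in particular $p\circ r_0\circ s_0=p\circ r_1\circ s_1$, which is exactly the weak amalgamation property witnessed by $A'$.

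Having shown that $\Mm_V$ is directed and has weak amalgamation, Corollary~\ref{co:Directed&CAMImpliesNonMeager} immediately gives that $\Homeo(\Can,\mu_V)$ has the strong Rokhlin property. I expect the delicate point to be the bookkeeping in the weak amalgamation step: one must check that the reduction produced by Lemma~\ref{le:ReverseProjection} together with the cofinal lift are genuine $\pi$-refinements between $\Cc_V$-objects, so that the composed morphisms really coincide with the refinement morphisms $\pi^{P_{E_i}}_{P_{A'}}$ and Theorem~\ref{th:Amalg} is legitimately applicable. Everything else is a routine assembly of the cited lemmas.
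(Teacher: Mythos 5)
Your proposal is correct and follows essentially the same route the paper intends: the corollary is stated without proof precisely because it is the assembly you describe --- amalgamation of $\Cc_V$ (Theorem~\ref{th:Amalg}) together with cofinality of $\Cc_V$ in $\Mm_V$ (Corollary~\ref{cor:MM-cofinal}) yields weak amalgamation, directedness comes from amalgamating over the weakly terminal object $[1]$ (every object maps onto it by \eqref{eq:normalised}), and Corollary~\ref{co:Directed&CAMImpliesNonMeager} concludes. Your only deviation is the detour through Lemma~\ref{le:ReverseProjection} to force the morphisms into $A'$ to be $\pi$-refinements, which is harmless but unnecessary, since Theorem~\ref{th:Amalg} applies to arbitrary morphisms $p_i\colon B_i\to A$ in $\Cc_V$, so one may simply lift each $B_i$ to $E_i\in\Cc_V$ by cofinality and amalgamate the composites $E_i\to B_i\to A'$ directly.
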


Given $V$ and $a\in V\setminus\{0\}$, define $V_a:=\{v/a\colon v\in V\}\cap [0,1]$. Note that $V_a$ is also group-like.
\begin{proposition}\label{pro:dychotomy-dense}
Let $V$ be a group-like set. For every $a\in V\setminus\{0\}$, denote by $\mu_a$ the good measure whose clopen values set is $V_a$. Then the following dichotomy holds.
\begin{enumerate}[\upshape(1)]
    \item If $V$ is $\QQ$-like then $\Homeo(\Can,\mu_a)$ has the strong Rokhlin property for every positive $a\in V$.
    \item If $V$ is not $\QQ$-like then for $a\in V$ arbitrarily close to
 $1$, $\Homeo(\Can,\mu_a)$ does not have the Rokhlin property. More specifically, if $b\in V$ and $n>1$ are such that $b<1$ and
 $\frac{b}{n} \notin V$, then $\Homeo(\Can,\mu_{na})$ does not have a dense conjugacy class for each $a \in V \cap [b/n,1/n]$.
\end{enumerate}
\end{proposition}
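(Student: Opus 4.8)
The plan is to prove the two parts separately, using the machinery developed in the preceding sections, in particular \THM{Q-ring}, the characterization of $\QQ$-likeness via the invariants $n^W_p$, and \LEM{density-divisibility}.

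\medskip

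For part (1), suppose $V$ is $\QQ$-like. First I would observe that $V_a$ is again $\QQ$-like for every positive $a \in V$. This should follow from the definitions: if $V + \ZZZ$ is a $\QQ$-vector space, then scaling by $1/a$ and intersecting with $[0,1]$ preserves the property that the generated group is $\QQ$-divisible. More precisely, since $V+\ZZZ$ contains $\QQ\cdot V$, the set $V_a + \ZZZ$ will also be closed under rational multiplication. Once $V_a$ is known to be $\QQ$-like, the strong Rokhlin property of $\Homeo(\Can,\mu_a)$ follows immediately from \COR{GoodAmalg}. This part is routine; the only point requiring care is verifying the scaling invariance of $\QQ$-likeness, which is a straightforward algebraic check.

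\medskip

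For part (2), assume $V$ is not $\QQ$-like, and suppose $b \in V$, $n > 1$ with $b < 1$ and $b/n \notin V$. Fix $a \in V \cap [b/n, 1/n]$, so that $na \in V \cap [b,1]$ and $na \leq 1$. I want to show $\Homeo(\Can,\mu_{na})$ has no dense conjugacy class. The key idea is to use the contrapositive of \LEM{density-divisibility}: if the automorphism group had the Rokhlin property, then for every $m \in \NN$ with $1/m \in V_{na}$ one would have $V_{na}/m \subseteq V_{na}$. I would apply this with a carefully chosen $m$ (or rather deduce a divisibility that is obstructed). The crucial arithmetic point is to translate the failure $b/n \notin V$ into a failure of the divisibility closure of $V_{na}$. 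Note $1 \in V_{na}$ trivially, and since $na \in V$ and $1/n$ appears through the hypothesis $a \in [b/n,1/n]$, I expect $1/n$ or a related fraction to lie in $V_{na}$ while the required quotient of $b/(na) \in V_{na}$ does not, precisely because $b/n \notin V$ means $(b/(na))/n \cdot (na) = b/n \notin V$, i.e.\ dividing the element $b/(na) \in V_{na}$ by $n$ leaves $V_{na}$. So I would verify that $b/(na) \in V_{na}$ but its quotient by $n$ is not in $V_{na}$, while simultaneously $1/n \in V_{na}$, contradicting \LEM{density-divisibility}.

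\medskip

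The main obstacle I anticipate is the bookkeeping in part (2): one must confirm that $1/n \in V_{na}$ (equivalently $a \in V$, which holds) and that $b/(na) \in V_{na}$ (equivalently $b \in V$, which holds since $b \leq na \cdot 1$ and we need $b/(na) \leq 1$, i.e.\ $b \leq na$, guaranteed by $a \geq b/n$), and then that dividing this element by $n$ exits $V_{na}$ because $(b/(na))/n$ scaled back by $na$ equals $b/n \notin V$. The delicate step is getting the scaling between $V$ and $V_{na}$ exactly right so that the single forbidden divisibility $b/n \notin V$ cleanly blocks the closure condition of \LEM{density-divisibility}. The final clause about $a$ being arbitrarily close to $1$ follows because $V$ is not $\QQ$-like, so by the invariant characterization there exists a prime $p$ with $0 < n^{V+\ZZZ}_p < \infty$; taking $n = p$ and $b$ close to $1$ with $b/p \notin V$ (possible since $V$ is dense in $[0,1]$ and the $p$-divisibility of $V$ is bounded) yields $na = pa$ with $a \in [b/p, 1/p]$, and as $b \to 1$ we get $pa \to 1$.
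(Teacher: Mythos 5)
Your core argument is correct and coincides with the paper's proof. For part (1), the paper simply cites \COR{GoodAmalg}; the scaling check you make explicit is right and is indeed the implicit step, since \(V_a+\ZZZ=\tfrac1a(V+\ZZZ)\) and a rescaling of a \(\QQ\)-vector space is a \(\QQ\)-vector space. For the ``more specifically'' clause of part (2), your bookkeeping is exactly the paper's: with \(a\in V\cap[b/n,1/n]\) one has \(na\in V\), \(1/n=a/(na)\in V_{na}\), and \(b/(na)\in V_{na}\) (using \(b\leq na\)); if \(\Homeo(\Can,\mu_{na})\) had the Rokhlin property, \LEM{density-divisibility} would force \(b/(n^2a)\in V_{na}\), i.e.\ \(b/n\in V\), a contradiction. (The paper phrases this with \(c\in V\cap[b/n,1/n]\) and \(a:=nc\), so its \(a\) is your \(na\).)

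The one genuine flaw is your justification of the ``\(a\) arbitrarily close to \(1\)'' clause. The claim that non-\(\QQ\)-likeness yields a prime \(p\) with \(0<n^{V+\ZZZ}_p<\infty\) is false: for \(V+\ZZZ=\ZZZ[1/2]\) (not \(\QQ\)-like, as \(1/3\notin\ZZZ[1/2]\)) one has \(n^{V+\ZZZ}_2=\infty\) and \(n^{V+\ZZZ}_p=0\) for all odd \(p\); and for \(V+\ZZZ=\ZZZ+\alpha\QQ\) with \(\alpha\) irrational, \(n^{V+\ZZZ}_p=0\) for every \(p\). (The Fuchs invariant is also only set up in the paper for subgroups of \(\QQ\).) Fortunately the detour is unnecessary: keep the pair \((b,n)\) fixed and vary \(a\) instead of \(b\). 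Since \(V\) is dense in \([0,1]\), there are \(a\in V\cap[b/n,1/n]\) arbitrarily close to \(1/n\), whence \(na\in V\) is arbitrarily close to \(1\), and the clause you already proved applies verbatim --- this is exactly how the paper concludes. One further point both you and the paper leave implicit but which deserves a line: such a pair \((b,n)\) exists whenever \(V\) is not \(\QQ\)-like. Indeed, if \(b/n\in V\) for every \(b\in V\) with \(b<1\) and every \(n>1\), then for \(b\in V\cap(0,1)\) also \(1-b\in V\) and \(1/n=b/n+(1-b)/n\in V+\ZZZ\), from which divisibility of \(V+\ZZZ\) follows, contradicting non-\(\QQ\)-likeness.
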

\begin{proof}
If $V$ is $\QQ$-like then the result follows from Corollary~\ref{co:GoodAmalg}.

If $V$ is not $\QQ$-like there is $b \in V \cap (0,1)$
and $n > 1$ such that $\frac{b}{n} \notin V$. Take arbitrary $c \in V \cap
[b/n,1/n]$ (recall that $V$ is dense in $[0,1]$) and consider $a := nc
\in V$. Then we can find a clopen partition $\{d_1,\ldots,d_n\}$ of $\Can$ such
that $\mu_a(d_k) = \frac{1}{n}$ for each $k$ (as $\frac{1}{n} \in V_a$).
We can also find a clopen set $e$ with $\mu_a(e) = b/a$. If $\Homeo(\Can,\mu_a)$ had a dense conjugacy class, then by Lemma~\ref{lem:density-divisibility} we have $b/(na)\in V_a$, thus $b/n\in V$ which contradicts the assumption.
\end{proof}

\bibliographystyle{siam}
\bibliography{references}

\begin{thebibliography}{10}

\bibitem{Akin99}
{\sc E.~Akin}, {\em Measures on {C}antor space}, in Proceedings of the 14th {S}ummer {C}onference on {G}eneral {T}opology and its {A}pplications ({B}rookville, {NY}, 1999), vol.~24, 1999, pp.~1--34.

\bibitem{Akin05}
\leavevmode\vrule height 2pt depth -1.6pt width 23pt, {\em Good measures on {C}antor space}, Trans. Amer. Math. Soc., 357 (2005), pp.~2681--2722.

\bibitem{AkDoMauYi08}
{\sc E.~Akin, R.~Dougherty, R.~D. Mauldin, and A.~Yingst}, {\em Which {B}ernoulli measures are good measures?}, Colloq. Math., 110 (2008), pp.~243--291.

\bibitem{BeMeTs}
{\sc I.~Ben~Yaacov, J.~Melleray, and T.~Tsankov}, {\em Metrizable universal minimal flows of {P}olish groups have a comeagre orbit}, Geom. Funct. Anal., 27 (2017), pp.~67--77.

\bibitem{BieKuWa19}
{\sc W.~Bielas, W.~a. Kubi\'s, and M.~Walczy\'nska}, {\em Homogeneous probability measures on the {C}antor set}, J. Math. Anal. Appl., 479 (2019), pp.~2076--2089.

\bibitem{DKMN2}
{\sc M.~Doucha, D.~Kwietniak, M.~Malicki, and P.~Niemiec}, {\em Automorphism groups of measures on the {C}antor space. {P}art {II}: Abstract homogeneous measures}, in preparation,  (2025).

\bibitem{Fu}
{\sc L.~Fuchs}, {\em Infinite abelian groups. {V}ol. {II}}, Pure and Applied Mathematics, Vol. 36-II, Academic Press, New York-London, 1973.

\bibitem{GW95}
{\sc E.~Glasner and B.~Weiss}, {\em Weak orbit equivalence of {C}antor minimal systems}, Internat. J. Math., 6 (1995), pp.~559--579.

\bibitem{IM16}
{\sc T.~Ibarluc\'ia and J.~Melleray}, {\em Full groups of minimal homeomorphisms and {B}aire category methods}, Ergodic Theory Dynam. Systems, 36 (2016), pp.~550--573.

\bibitem{IM17}
\leavevmode\vrule height 2pt depth -1.6pt width 23pt, {\em Dynamical simplices and minimal homeomorphisms}, Proc. Amer. Math. Soc., 145 (2017), pp.~4981--4994.

\bibitem{Iv99}
{\sc A.~A. Ivanov}, {\em Generic expansions of {{\(\omega\)}}-categorical structures and semantics of generalized quantifiers}, J. Symb. Log., 64 (1999), pp.~775--789.

\bibitem{KeRo07}
{\sc A.~S. Kechris and C.~Rosendal}, {\em Turbulence, amalgamation, and generic automorphisms of homogeneous structures}, Proc. Lond. Math. Soc. (3), 94 (2007), pp.~302--350.

\bibitem{Kub22}
{\sc W.~Kubi{\'s}}, {\em Weak {Fra{\"{\i}}ss{\'e}} categories}, Theory Appl. Categ., 38 (2022), pp.~27--63.

\bibitem{Mel23}
{\sc J.~Melleray}, {\em Generic properties of homeomorphisms preserving a given dynamical simplex}, Ergodic Theory Dynam. Systems, 43 (2023), pp.~646--662.

\bibitem{OU1941}
{\sc J.~C. Oxtoby and S.~M. Ulam}, {\em Measure-preserving homeomorphisms and metrical transitivity}, Ann. of Math. (2), 42 (1941), pp.~874--920.

\end{thebibliography}
\end{document}